\documentclass[twocolumn]{autart}

\usepackage{natbib}            
\usepackage{color}
\usepackage[usenames,dvipsnames]{xcolor}
\usepackage{subfigure}
\usepackage{enumerate}

\usepackage{multicol}
\usepackage{graphicx,amssymb,amstext,amsmath}
\usepackage{float}
\usepackage{empheq}

\makeatletter
\renewcommand*{\@opargbegintheorem}[3]{\trivlist
  \item[\hskip \labelsep{\bfseries #1\ #2}] \textbf{(#3)}\ \itshape}
\makeatother

\usepackage{algorithm,algorithmicx,algpseudocode}
\algnewcommand{\algorithmicgoto}{\textbf{go to}}%
\algnewcommand{\Goto}[1]{\algorithmicgoto~\ref{#1}}%
\algnewcommand{\LineComment}[1]{\Statex \(\triangleright\) #1}

\usepackage{multirow}
\usepackage{stfloats}
\usepackage{flushend}
\usepackage{cuted}

\makeatletter
\newcommand\footnoteref[1]{\protected@xdef\@thefnmark{\ref{#1}}\@footnotemark}
\makeatother

\renewcommand{\qed}{\hfill\rule{1.2ex}{1.2ex}}
\newenvironment{proof}[1][Proof]{\begin{trivlist}
\item[\hskip \labelsep {\textit{ #1}.}]}{\qed \end{trivlist}}

\let\oldFootnote\footnote
\newcommand\nextToken\relax

\renewcommand\footnote[1]{%
    \oldFootnote{#1}\futurelet\nextToken\isFootnote}

\newcommand\isFootnote{%
    \ifx\footnote\nextToken\textsuperscript{,}\fi}

\setlength{\marginparwidth}{0.3in}

\begin{document}

\begin{frontmatter}
\title{A Unified Filter for Simultaneous Input and State Estimation of Linear Discrete-time Stochastic Systems}

\author[First]{Sze Zheng Yong}
\author[Second]{Minghui Zhu}
\author[First]{Emilio Frazzoli}

\address[First]{Laboratory for Information and Decision Systems, Massachusetts Institute of Technology, Cambridge, MA 02139, USA (e-mail: szyong@mit.edu, frazzoli@mit.edu).}
\address[Second]{Department of Electrical Engineering, Pennsylvania State University, 201 Old Main, University Park, PA 16802, USA (e-mail: muz16@psu.edu).}

\begin{abstract}
In this paper, we present a unified optimal and exponentially stable filter for linear discrete-time stochastic systems that simultaneously estimates the states and unknown inputs in an unbiased minimum-variance sense, without making any assumptions on the direct feedthrough matrix. We also derive input and state observability/detectability conditions, and analyze their connection to the convergence and stability of the estimator. We discuss two variations of the filter and their optimality and stability properties, and show  that filters in the literature, including the Kalman filter, are special cases of the filter derived in this paper. Finally, illustrative examples are given to demonstrate the performance of the unified unbiased minimum-variance filter.
 \end{abstract}

\end{frontmatter}

\section{Introduction}
\vspace{-0.2cm}
The term filter or estimator is commonly used to refer to systems that 
extract information about a quantity of interest from measured data 
corrupted by noise. Kalman filtering provides the tool needed for 
obtaining that reliable estimate when the system is linear and when 
the disturbance inputs or the unknown parameters are well modeled by 
a zero-mean, Gaussian white noise. However, in many instances, the exogenous input cannot be modeled as a Gaussian stochastic process rendering the estimates unreliable.

For example, consider the problem of estimating the state and 
inferring the intent of another vehicle at an intersection, 
for instance, for ensuring the safety of autonomous or semi-autonomous 
vehicles~\cite{Verma.2011}. In this case, the input of the other 
vehicle is inaccessible/unmeasurable, and is not well modeled by 
a zero-mean Gaussian white noise process. Thus, the standard Kalman 
filter does not yield an optimal estimate. Nonetheless, we want to 
be able to estimate the states and inputs of the other vehicle based on noisy
measurements for purposes of collision avoidance, route planning, etc.

Similar problems can be found across a wide range of disciplines, from the real-time estimation of mean areal precipitation during a storm \cite{Kitanidis.1987} to fault detection and diagnosis \cite{Patton.1989} to input estimation in physiological systems \cite{DeNicolao.1997}. Thus, this filtering problem in the presence of unknown inputs has steadily made it to the forefront in the recent decades.

\emph{Literature review.} Much of the research focus has been on state estimation of systems with unknown inputs without actually estimating the inputs. An optimal filter that estimates a minimum-variance unbiased (MVU) state estimate for a system with unknown inputs is first developed for linear systems without direct feedthrough in \cite{Kitanidis.1987}. This design was extended to a more general parameterized solution by \cite{Darouach.1997}, and eventually to state estimation of systems with direct feedthrough in \cite{Hou.1998,Darouach.2003,Cheng.2009}. 
Similarly, while $H_\infty$ filters (e.g., \cite{Simon.2006,Wang.et.al.2008,Wang.et.al.2013,Li.Gao.2013}) can deal with non-Gaussian disturbance inputs in minimizing the worst-case state estimation error, the unknown input is not estimated. However, the problem of estimating the unknown input itself is often as important as state information, and should also be considered.

Palanthandalam-Madapusi and Bernstein \cite{Palanthandalam.2007} proposed an approach to reconstruct the unknown inputs, in a process that is decoupled from state estimation with an emphasis on unbiasedness, but neglecting the optimality of the estimate. On the other hand,  Hsieh \cite{Hsieh.2000} and Gillijns and De Moor \cite{Gillijns.2007} developed simultaneous input and state filters that are optimal in the minimum-variance unbiased sense, for systems without direct feedthrough. Extensions to systems with a full rank direct feedthrough matrix were proposed by Gillijns and De Moor \cite{Gillijns.2007b}, Fang et al. \cite{Fang.2011} and Yong et al. \cite{Yong.Zhu.Frazzoli.2013}. In an attempt to deal with systems with a rank deficient direct feedthrough matrix, Hsieh \cite{Hsieh.2009} allowed the input estimate to be biased. Thus, the problem of finding a simultaneous state and input filter for systems with rank deficient direct feedthrough matrix that is both unbiased and has minimum variance remains open. Moreover, a unified MVU filter that works for all cases remains elusive.

Another set of relevant literature pertains to the stability of the state and input filters, since optimality does not imply stability and vice versa. However, to the best of our knowledge, the literature on this subject is limited to linear time-invariant systems \cite{Cheng.2009,Fang.2011,Fang.2011b}. Yet another related literature is on state and input observability and detectability conditions, also known as strong or perfect observability and detectability, as this will be shown to be related to the stability of the filter dynamics for both linear time-varying and time-invariant systems with unknown inputs. Some conditions for state and input observability were derived in \cite{Palanthandalam.2007,Hautus.1983,suda.Mutsuyoshi.1978}. 

\emph{Contributions.}
We introduce a unified filter for simultaneously estimating both state and unknown input such that the estimates are unbiased and have minimum variance with no restrictions on the direct feedthrough matrix of the linear discrete-time stochastic system. Within this framework, we propose two variants of the MVU state and input estimator, which are generalizations of the estimators in the literature, specifically of \cite{Gillijns.2007,Gillijns.2007b,Yong.Zhu.Frazzoli.2013}, and the Kalman filter. Furthermore, we derive sufficient conditions for the filter stability of linear time-varying systems with unknown inputs, an important problem that has been previously unexplored; while for linear time-invariant systems, necessary and sufficient conditions for convergence of the filter gains to a steady-state solution are provided. The key insight we gained is that the exponential stability of the filter is directly related to the strong detectability of the time-varying system, without which unbiased state and input estimates cannot be obtained even in the absence of stochastic noise. 
We shall also show that one of the filter variants we propose is globally optimal (i.e., optimal over the class of all linear state and input estimators as in \cite{Kerwin.Prince.2000}).

In connection to the existing literature, this paper presents a combination of several ideas from \cite{Cheng.2009,Gillijns.2007,Gillijns.2007b} and our recent work \cite{Yong.Zhu.Frazzoli.2013} into a unified filter in a manner that provably preserves and extends the nice properties of these filters. However, there are a number of distinctions between our filter and the above referenced filters. In particular, we show that the state-only filter in \cite{Cheng.2009} implicitly estimates the unknown inputs in a suboptimal manner and so does the approach for input estimation in \cite{Gillijns.2007b} (employed in one of the two variants of our filter). In contrast, our optimal filter variant uses the approaches of our previous work in \cite{Yong.Zhu.Frazzoli.2013} and of generalized least squares estimation, which lead to the desired optimality of the input estimates. In addition, we gave sufficient conditions for filter stability for linear time-varying systems, which clearly cannot be carried over from the existing literature (including \cite{Cheng.2009,Gillijns.2007,Gillijns.2007b}) for linear time-invariant systems.
 

\emph{Notation.} We first summarize the notation used throughout the paper. $\mathbb{R}^n$ denotes the $n$-dimensional Euclidean space, $\mathbb{C}$ the field of complex numbers and $\mathbb{N}$ nonnegative integers. For a vector of random variables, $v \in \mathbb{R}^n$, the expectation is denoted by $\mathbb{E}[v]$. Given a matrix $M \in \mathbb{R}^{p \times q}$, its transpose, inverse, Moore-Penrose pseudoinverse, range, trace and rank are given by $M^\top$, $M^{-1}$, $M^\dagger$, ${\rm Ra}(M)$, ${\rm tr}(M)$ and ${\rm rk}(M)$. For a symmetric matrix $S$, $S \succ 0$ and $S \succeq 0$ indicates that $S$ is positive definite and positive semidefinite, respectively.

\section{Problem Statement} \label{sec:Problem}
\vspace{-0.2cm}
\begin{figure*}[b]
\hrulefill
\begin{align} \label{eq:matrices}
\nonumber \mathcal{Z}_r &:= \begin{bmatrix} \mathcal{Z}_{1,r} \\ \mathcal{Z}_{2,r} \end{bmatrix}, \ \mathcal{Z}_{q,r} := \begin{bmatrix} z_{q,0}^\top & z_{q,1}^\top & \hdots & z_{q,r}^\top \end{bmatrix}^\top \, \forall \, q=\{1,2\} , \   \mathcal{Z}_{1,r} \in \mathbb{R}^{\sum^{r}_{k=0}p_{H_k}}, \ \mathcal{Z}_{2,r} \in \mathbb{R}^{(r+1)l-\sum^{r}_{k=0}p_{H_k}},  \\
\nonumber  \mathcal{D}_{r}&:=\begin{bmatrix} \mathcal{D}_{1,r} \\ \mathcal{D}_{2,r} \end{bmatrix}, \ \mathcal{D}_{1,r} :=\begin{bmatrix} d_{1,0}^\top & d_{1,1}^\top & \hdots & d_{1,r}^\top \end{bmatrix}^\top \in \mathbb{R}^{{\sum^{r}_{k=0}p_{H_k}}}, \mathcal{D}_{2,r} :=\begin{bmatrix}  d_{2,0}^\top & d_{2,1}^\top & \hdots & d_{2,r-1}^\top \end{bmatrix}^\top \in \mathbb{R}^{rp-{\sum^{r-1}_{k=0}p_{H_k}}}, \\
\nonumber p_{H_k}&={\rm rk}(H_k) \; \forall \, 0 \leq k \leq r, \ \Phi_{(i,i)} := \hat{A}_i := A_i-G_{1,i}\Sigma_{i}^{-1}C_{1,i}, \;  \Phi_{(i,j> i)} := \hat{A}_{j} \hdots \hat{A}_i, \; \makebox{and } \; \forall \,q =\{1,2\}, s=\{1,2\},\\
\nonumber \mathcal{O}_{q,r} &:=
\begin{bmatrix} C_{q,0} \\ C_{q,1}\hat{A}_0 \\  C_{q,2}\Phi_{(0,1)} \\ \vdots \\  C_{q,r-1}\Phi_{(0,r-2)} \\  C_{q,r}\Phi_{(0,r-1)}  \end{bmatrix}, \,
\mathcal{I}_{(q,s),r} := \begin{bmatrix}
0 & 0 & \hdots & 0 & 0 \\
C_{q,1}G_{s,0}  & 0 & \hdots & 0 & 0 \\
C_{q,2}\hat{A}_1 G_{s,0} & C_{q,2}G_{s,1} & \hdots & 0 & 0\\
\vdots & \vdots & \ddots & \vdots  & \vdots \\
C_{q,r-1}\Phi_{(1,r-2)}G_{s,0} & C_{q,r-1}\Phi_{(2,r-2)}G_{s,1} & \hdots & C_{q,r-1}G_{s,r-2} & 0 \\
C_{q,r}\Phi_{(1,r-1)}G_{s,0}  & C_{q,r}\Phi_{(2,r-1)}G_{s,1} & \hdots & C_{q,r}\hat{A}_{r-1}G_{s,r-2} & C_{q,r}G_{s,r-1}
\end{bmatrix}   \tag{$\star$} 
\end{align}
\end{figure*}

Consider the linear time-varying discrete-time system
\begin{align} \label{eq:system}
\begin{array}{ll}
x_{k+1}&=A_k x_k+B_k u_k+G_k d_k +w_k\\
y_k&=C_k x_k +D_k u_k + H_k d_k + v_k \end{array}
\end{align}
where $x_k \in \mathbb{R}^n$ is the state vector at time $k$, $u_k \in \mathbb{R}^m$ is a known input vector, $d_k \in \mathbb{R}^p$ is an unknown input vector, and $y_k \in \mathbb{R}^l$ is the measurement vector. The process noise $w_k \in \mathbb{R}^n$ and the measurement noise $v_k \in \mathbb{R}^l$ are assumed to be mutually uncorrelated, zero-mean, white random signals with known covariance matrices, $Q_k=\mathbb{E} [w_k w_k^\top] \succeq 0$ and $R_k=\mathbb{E} [v_k v_k^\top] \succ 0$, respectively. Without loss of generality, we assume throughout the paper that $n \geq l \geq 1$, $l \geq p \geq 0$ and $m \geq 0$, and that the current time variable $r$ is strictly nonnegative. $x_0$ is also assumed to be independent of $v_k$ and $w_k$ for all $k$. 

The matrices $A_k$, $B_k$, $C_k$, $D_k$, $G_k$ and $H_k$ are known and bounded. Note that no assumption is made on $H_k$ to be either the zero matrix (no direct feedthrough), or to have full column rank when there is direct feedthrough. Without loss of generality, we assume ${\rm max}_k ({\rm rk}[G_k^\top\; H_k^\top ] )=p$. 
(Otherwise, we can retain the linearly independent columns and the ``remaining" inputs still affect the system in the same way.)

The estimator design problem, addressed in this paper, can be stated as follows:\newline
\emph{Given a linear discrete-time stochastic system with unknown inputs \eqref{eq:system}, design a globally optimal and stable filter that simultaneously estimates system states and unknown inputs in an unbiased minimum-variance manner.} 

\section{Preliminary Material}
\subsection{System Transformation}

We first carry out a transformation of the system to decouple the output equation into two components, one with a full rank direct feedthrough matrix and the other without direct feedthrough. In this form, the filter can be designed leveraging existing approaches for both cases (e.g., \cite{Gillijns.2007,Yong.Zhu.Frazzoli.2013}).

Let $p_{H_k}:={\rm rk} (H_k)$. Using singular value decomposition, we rewrite the direct feedthrough matrix $H_k$  as
\begin{align}
H_k= \begin{bmatrix}U_{1,k}& U_{2,k} \end{bmatrix} \begin{bmatrix} \Sigma_k & 0 \\ 0 & 0 \end{bmatrix} \begin{bmatrix} V_{1,k}^{\, \top} \\ V_{2,k}^{\, \top} \end{bmatrix}
\end{align}
where $\Sigma_k \in \mathbb{R}^{p_{H_k} \times p_{H_k}}$ is a diagonal matrix of full rank, $U_{1,k} \in \mathbb{R}^{l \times p_{H_k}}$, $U_{2,k} \in \mathbb{R}^{l \times (l-p_{H_k})}$, $V_{1,k} \in \mathbb{R}^{p \times p_{H_k}}$, $V_{2,k} \in \mathbb{R}^{p \times (p-p_{H_k})}$, and $U_k:=\begin{bmatrix} U_{1,k} & U_{2,k} \end{bmatrix}$ and $V_k:=\begin{bmatrix} V_{1,k} & V_{2,k} \end{bmatrix}$ are unitary matrices. Note that in the case with no direct feedthrough, $\Sigma_k$, $U_{1,k}$ and $V_{1,k}$ are empty matrices\footnote{We adopt the convention that the inverse of an empty matrix is also an empty matrix and assume that operations with empty matrices are possible. These features are readily available in many simulation software products such as MATLAB, LabVIEW and GNU Octave. Otherwise, a conditional statement can be included to bypass this case.}, and $U_{2,k}$ and $V_{2,k}$ are arbitrary unitary matrices.

Then, as suggested in \cite{Cheng.2009}, we define two orthogonal components of the unknown input given by
\begin{align}
d_{1,k}=V_{1,k}^\top d_k, \quad
d_{2,k}=V_{2,k}^\top d_k.
\end{align}
Since $V_k$ is unitary, $d_k =V_{1,k} d_{1,k}+V_{2,k} d_{2,k}$ and the system \eqref{eq:system} can be rewritten as
\begin{align}
\nonumber x_{k+1}&=A_k x_k+B_k u_k+G_k V_{1,k} d_{1,k} +G_k V_{2,k} d_{2,k} +w_k\\
& = A_k x_k+B_k u_k+G_{1,k} d_{1,k} +G_{2,k} d_{2,k} +w_k \quad \; \label{eq:sysX}\\
\nonumber y_k&=C_k x_k +D_k u_k + H_k V_{1,k} d_{1,k} + H_k V_{2,k} d_{2,k}  + v_k\\
&=C_k x_k +D_k u_k + H_{1,k} d_{1,k} + v_k, \label{eq:y}
\end{align}
where $G_{1,k} :=G_k V_{1,k}$, $G_{2,k} :=G_k V_{2,k}$ and $H_{1,k} :=H_k V_{1,k}=U_{1,k} \Sigma_k$. Next, as aforesaid, we decouple the output $y_k$ 
using a nonsingular transformation
\begin{align} \label{eq:T_k}
\nonumber T_k &=\begin{bmatrix} T_{1,k} \\ T_{2,k} \end{bmatrix} \\
&= \begin{bmatrix} I_{p_{H_k}} & -U_{1,k}^\top R_k U_{2,k} (U_{2,k}^\top R_k U_{2,k})^{-1}\\ 0 & I_{(l-p_{H_k}) } \end{bmatrix} \begin{bmatrix} U_{1,k}^\top \\ U_{2,k}^\top \end{bmatrix}
\end{align}
to obtain $z_{1,k} \in \mathbb{R}^{p_{H_k}}$ and $z_{2,k} \in \mathbb{R}^{l-p_{H_k}}$ given by
\begin{align} \label{eq:sysY} \begin{array}{lll}
z_{1,k}&=T_{1,k} y_k &= C_{1,k} x_k + D_{1,k} u_k +\Sigma_k d_{1,k} + v_{1,k}\\
z_{2,k}&=T_{2,k} y_k &= C_{2,k} x_k + D_{2,k} u_k + v_{2,k}\end{array}
\end{align}
where $C_{1,k} :=T_{1,k} C_k$, $C_{2,k} := T_{2,k} C_k = U_{2,k}^\top C_k$, $D_{1,k} :=T_{1,k} D_k$, $D_{2,k} := T_{2,k} D_k = U_{2,k}^\top D_k$, $v_{1,k} :=T_{1,k} v_k$ and $v_{2,k} := T_{2,k} v_k = U_{2,k}^\top v_k$. This transform is also chosen such that the measurement noise terms for the decoupled outputs are uncorrelated. The covariances of $v_{1,k}$ and $v_{2,k}$ can then be found as follows:
\begin{align} \label{eq:R12}
\nonumber R_{1,k}&:=\mathbb{E}[v_{1,k} v_{1,k}^\top]=T_{1,k} R_k T_{1,k}^\top \succ 0\\
\nonumber R_{2,k}&:=\mathbb{E}[v_{2,k} v_{2,k}^\top]=T_{2,k} R_k T_{2,k}^\top = U_{2,k}^\top R_k U_{2,k}  \succ 0 \\
 R_{12,(k,i)}&:=\mathbb{E}[v_{1,k} v_{2,i}^\top] = T_{1,k} R_k T_{2,k}^\top = U_{1,k}^\top R_k U_{2,k}^\top \hspace{-0.2cm}\\
\nonumber & \hspace{-1.5cm}- U_{1,k}^\top R_k U_{2,k} (U_{2,k}^\top R_k U_{2,k})^{-1}U_{2,k}^\top R_k U_{2,k} =0, \; \forall k, i \in \mathbb{N}
 \end{align}
Since the initial state, process and measurement noise, are assumed to be uncorrelated, the covariances of $v_{1,k}$ and $v_{2,k}$ with the initial state and process noise are
\begin{align}
\nonumber \mathbb{E}[v_{1,k} w_{i}^\top] &= T_{1,k} \mathbb{E}[v_k w_i^\top]=0 \\
\nonumber \mathbb{E}[v_{2,k} w_{i}^\top] &= T_{2,k} \mathbb{E}[v_k w_i^\top]=0 \\
 \mathbb{E}[v_{1,k} v_{1,i}^\top] &= T_{1,k} \mathbb{E}[v_k v_i^\top] T_{1,i}^\top=0, \; \forall k \neq i\\
\nonumber \mathbb{E}[v_{2,k} v_{2,i}^\top] &= T_{2,k} \mathbb{E}[v_k v_i^\top] T_{2,i}^\top=0, \; \forall k \neq i\\
\nonumber \mathbb{E}[v_{1,k} x_0^\top] &= T_{1,k} \mathbb{E}[v_k x_0^\top] =0\\
\nonumber \mathbb{E}[v_{2,k} x_0^\top] &= T_{2,k} \mathbb{E}[v_k x_0^\top] =0.
\end{align}

\subsection{Input and State Observability and Detectability} \label{sec:obsv}

Similar to the analysis of the convergence of the Kalman filter, we will show in Section \ref{sec:analysis} that the convergence of the unified filter is directly related to the notion of input and state observability and detectability (with $w_k = v_k = 0$, and without loss of generality, we assume that $B_k=D_k=0$), also known as strong or perfect observability and detectability (e.g., see (\cite{Hautus.1983,suda.Mutsuyoshi.1978,Silverman.1976}), defined as follows:

\begin{defn}[Strong observability]
The linear system \eqref{eq:system} is \emph{strongly observable}, 
or equivalently \emph{state and input observable} or \emph{perfectly observable}, if the initial 
condition $x_0$ and the unknown input sequence up to time $r-1$, 
$\{d_i\} ^{r-1}_{i=0}$, and specifically 
$\mathcal{D}_{r} \in \mathbb{R}^{rp+p_{H_r}} $, 
can be uniquely determined from the measured output sequence $\{y_i \}^r_{i=0}$, or equivalently $\mathcal{Z}_r \in \mathbb{R}^{(r+1)l} $, for a large enough number of observations, i.e.,  $r \geq r_0$ for some $r_0 \in \mathbb{N}$, where $\mathcal{D}_{r}$ and $\mathcal{Z}_r$ are given in \eqref{eq:matrices}. 
\end{defn}

Next, we present the conditions for strong observability for the time-varying and time-invariant cases.%
\begin{thm}[Strong observability (time-varying)] \label{thm:obsv}
A linear time-varying discrete-time system is input and state observable
if and only if 
\begin{align}\textstyle {\rm rk} (\begin{bmatrix} \mathcal{O}_{2,r} & \mathcal{I}_{(2,2),r} \end{bmatrix})=n+rp-{\sum^{r-1}_{k=0}p_{H_k}} \label{cond} \end{align}

where $p_{H_k}$, as well as the \emph{observability} and \emph{invertibility} matrices, $\mathcal{O}_{2,r} \in \mathbb{R}^{((r+1)l-{\sum^{r}_{k=0}p_{H_k}}) \times n}$ and 
$ \mathcal{I}_{(2,2),r} \in \mathbb{R}^{((r+1)l-{\sum^{r}_{k=0}p_{H_k}}) \times (rp-{\sum^{r-1}_{k=0}p_{H_k}})}$, are given in \eqref{eq:matrices}.

Necessary conditions for \eqref{cond} to hold are
\begin{enumerate}[(I)]
\item
$ r \geq r_{0,r}$ and $l \geq p+1$, or
$l=p=n$ and $p_{H_r} =0$,
\label{cond:1}
\item \begin{enumerate}[(a)] \item ${\rm rk}(\mathcal{O}_{2,r})=n$,
\item ${\rm rk}(\mathcal{I}_{(2,2),r}^k)=p-p_{H_{k-1}}$, $\forall \, 0 \leq k \leq r$,
\end{enumerate}
\end{enumerate}
where $r_{0,r}:=\lceil \frac{n-l-p_{H_r}}{l-p} \rceil$, $\lceil a \rceil$ is the smallest integer not less than $a$ and $\mathcal{I}_{(2,2),r}^k$ is the $k$-th column of $\mathcal{I}_{(2,2),r}$.
\end{thm}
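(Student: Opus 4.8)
The plan is to reduce strong observability to left-invertibility of a single structured linear map built from \eqref{eq:matrices}, and then to read off conditions (I) and (II) from elementary facts about full-column-rank matrices. First I would set $w_k=v_k=0$ and, as in the definition, $B_k=D_k=0$, and work in the decoupled coordinates \eqref{eq:sysX} and \eqref{eq:sysY}. Since $\Sigma_k$ is invertible I can eliminate $d_{1,k}=\Sigma_k^{-1}(z_{1,k}-C_{1,k}x_k)$ from the state equation, obtaining the reduced recursion $x_{k+1}=\hat{A}_k x_k+G_{2,k}d_{2,k}+G_{1,k}\Sigma_k^{-1}z_{1,k}$ whose only unknowns are $x_0$ and the $d_{2,j}$, because $z_{1,k}$ is measured. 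Unrolling this recursion with the transition products $\Phi_{(i,j)}$ of \eqref{eq:matrices}, substituting into $z_{2,k}=C_{2,k}x_k$, and stacking $k=0,\dots,r$, I would arrive at
\[
\mathcal{Z}_{2,r}-\mathcal{J}_r=\mathcal{O}_{2,r}\,x_0+\mathcal{I}_{(2,2),r}\,\mathcal{D}_{2,r},
\]
where $\mathcal{J}_r$ is a known linear function of the measured $\mathcal{Z}_{1,r}$ and $\mathcal{O}_{2,r},\mathcal{I}_{(2,2),r}$ are exactly the matrices of \eqref{eq:matrices}.

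Next, for the biconditional: in the displayed identity the left-hand side is known, so $x_0$ and $\mathcal{D}_{2,r}$ are uniquely recoverable from $\mathcal{Z}_r$ iff $[\mathcal{O}_{2,r}\;\mathcal{I}_{(2,2),r}]$ is left-invertible, i.e.\ has rank equal to its column count $n+rp-\sum_{k=0}^{r-1}p_{H_k}$, which is precisely \eqref{cond}. For the ``if'' direction I would note that $x_0$ and $\mathcal{D}_{2,r}$ propagate to every $x_k$ through the reduced recursion, hence to every $d_{1,k}=\Sigma_k^{-1}(z_{1,k}-C_{1,k}x_k)$ and $d_k=V_{1,k}d_{1,k}+V_{2,k}d_{2,k}$, so all of $\mathcal{D}_r$ is determined. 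For ``only if'' I would take a nonzero $[\delta x_0^\top\;\delta\mathcal{D}_{2,r}^\top]^\top$ in the kernel, propagate it through the homogeneous reduced recursion together with $\delta d_{1,k}:=-\Sigma_k^{-1}C_{1,k}\delta x_k$, and verify that the resulting nonzero perturbation of $(x_0,\mathcal{D}_r)$ leaves $z_{1,k}$ and $z_{2,k}$ --- hence $y_k$, since $T_k$ is nonsingular --- unchanged for all $k\le r$, contradicting observability.

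For the necessary conditions, (I) follows because a left-invertible matrix has at least as many rows as columns: $(r+1)l-\sum_{k=0}^{r}p_{H_k}\ge n+rp-\sum_{k=0}^{r-1}p_{H_k}$, which rearranges to $r(l-p)\ge n+p_{H_r}-l$; this reads $r\ge r_{0,r}$ when $l\ge p+1$ and, when $l=p$, forces $l=p=n$ and $p_{H_r}=0$ since $n\ge l\ge p$. For (II) I would use that any column-submatrix of a full-column-rank matrix is again full-column-rank: restricting to the columns of $\mathcal{O}_{2,r}$ gives ${\rm rk}(\mathcal{O}_{2,r})=n$, i.e.\ (II)(a), and restricting to the columns multiplying a single $d_{2,j}$ gives the full-column-rank statement on $\mathcal{I}_{(2,2),r}^k$, i.e.\ (II)(b).

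The hard part will be the bookkeeping in the first step --- isolating the data-dependent offset $\mathcal{J}_r$, getting the pattern of transition products $\Phi_{(i,j)}$ right, and checking that the assembled coefficient matrices coincide verbatim with $\mathcal{O}_{2,r}$ and $\mathcal{I}_{(2,2),r}$ in \eqref{eq:matrices}; everything after that is routine linear algebra. A secondary point requiring care is the ``$r\ge r_0$'' quantifier in the definition of strong observability: I would need to confirm that, under (I)--(II), full column rank at one sufficiently large $r$ persists for all larger $r$, so that observability and \eqref{cond} are genuinely equivalent.
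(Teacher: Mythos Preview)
Your proposal is correct and follows essentially the same route as the paper: eliminate $d_{1,k}$ via the invertibility of $\Sigma_k$ to obtain the reduced recursion driven by $\hat{A}_k$, unroll and stack the $z_{2,k}$ equations to produce the linear system with coefficient matrix $[\mathcal{O}_{2,r}\ \mathcal{I}_{(2,2),r}]$, and then read off \eqref{cond} and the necessary conditions (I)--(II) from full-column-rank and row/column counting. Your treatment is in fact slightly more careful than the paper's: you make the ``only if'' direction explicit via a kernel perturbation, and you flag the quantifier issue about persistence of full rank for all $r$ beyond some $r_0$, which the paper does not address in the time-varying case (it only handles this for the time-invariant setting in the subsequent theorem).
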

\begin{proof}
The system transformation given by \eqref{eq:T_k} transforms the output equations such that the $d_{1,k}$ component of can be determined from only the current output measurement and previous state and input estimates. Specifically,
from \eqref{eq:sysX} and \eqref{eq:sysY}, and ignoring the
known input and noise terms, we find $\mathcal{D}_{1,r}=-\breve{\Sigma} \begin{bmatrix}\mathcal{O}_{1,r} & \mathcal{I}_{(1,2),r} \end{bmatrix} \begin{bmatrix} x_0 \\ \mathcal{D}_{2,r} \end{bmatrix} +(I_{\sum^{r}_{k=0}p_{H_k}}-\mathcal{I}_{(1,1),r})\breve{\Sigma} \mathcal{Z}_{1,r}$ where $\breve{\Sigma}=\begin{bmatrix} \Sigma_0^{-1}& \hdots & \Sigma_r^{-1} \end{bmatrix}$. Substituting this in the output equation $z_{2,k}$ in \eqref{eq:sysX}, we observe that the initial state $x_0$ and unknown input $\mathcal{D}_{2,r}$ (and consequently $\mathcal{D}_{1,r}$ from the previous equation) can be obtained from
$\begin{bmatrix} \mathcal{O}_{2,r} & \mathcal{I}_{(2,2),r} \end{bmatrix}
\begin{bmatrix} x_0 \\ \mathcal{D}_{2,r} \end{bmatrix}
=\mathcal{Z}_{2,r}-\mathcal{I}_{(2,1),r}
\begin{bmatrix} G_{1,0}\Sigma_0^{-1} & \hdots & G_{1,r-1}\Sigma_{r-1}^{-1} \end{bmatrix}
\mathcal{Z}_{1,r-1}$.
Thus, the linear system has a unique solution if and only if \eqref{cond} holds:
\begin{enumerate} [(I)]
\item The linear system is not underdetermined, i.e., $(r+1)l -{\sum^{r}_{k=0}p_{H_k}}\geq n +rp -{\sum^{r-1}_{k=0}p_{H_k}} \Rightarrow (r+1)l \geq n +rp+p_{H_r}$. Thus, \eqref{cond:1} holds.
\item The matrix $\begin{bmatrix} \mathcal{O}_{2,r} & \mathcal{I}_{(2,2),r} \end{bmatrix}$ has full column rank. For this to hold, the following are necessary:
\begin{enumerate}[(a)]
\item $\mathcal{O}_{2,r}$ has full column rank.
\item $\mathcal{I}^k_{(2,2),r}$ has full column rank, $\forall \, 1 \leq k \leq r$. \vspace{-0.06cm}
\end{enumerate}\end{enumerate}
\vspace{-0.7cm}
\end{proof}

\begin{thm}[Strong observability(time-invariant)] \label{thm:obsvTI}
A linear time-invariant discrete-time system is input and state observable if and only if
\begin{align}
{\rm rk} (\begin{bmatrix} \mathcal{O}_{2,\tilde{n}} & \mathcal{I}_{(2,2),\tilde{n}} \end{bmatrix})=n+\tilde{n}(p- p_{H}) \label{eq:tinv}
\end{align}
for some $0 \leq \tilde{n} \leq n$ where $p_H={\rm rk}(H)$. Moreover, if $l \neq p$, then $r_0 \leq \tilde{n} \leq n$, where $r_0:=\lceil \frac{n-l-p_H}{l-p} \rceil$; otherwise, $l=p=n$ and $p_H=0$ must hold. 
Necessary conditions for \eqref{eq:tinv} to hold when $\tilde{n}=n$ are
\begin{enumerate}[(I)]
\item
$ r \geq r_0$ and $l \geq p+1$, or
$l=p=n$ and $p_H =0$,
\item \begin{enumerate}[(a)] \item ${\rm rk}(\mathcal{O}_{2,n-1})=n$; thus, $(\hat{A},C)$ is observable,
\item ${\rm rk}(C_2G_2)=p-p_H$; thus, ${\rm rk}(C G) \geq p-p_H$,
\end{enumerate}
\end{enumerate}
where $\hat{A}=A-G_1 \Sigma^{-1} C_1$.
\end{thm}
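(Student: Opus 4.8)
The plan is to reduce the time-invariant claim to Theorem~\ref{thm:obsv}. With constant data the necessary condition $p_{H_k}\equiv p_H$ turns \eqref{cond} into ${\rm rk}([\mathcal{O}_{2,r}\ \mathcal{I}_{(2,2),r}])=n+r(p-p_H)$, and the already-formed entries of $\mathcal{O}_{2,r}$ and $\mathcal{I}_{(2,2),r}$ do not change with $r$. So I would introduce the column-rank defect $\gamma(r):=n+r(p-p_H)-{\rm rk}([\mathcal{O}_{2,r}\ \mathcal{I}_{(2,2),r}])=\dim\ker[\mathcal{O}_{2,r}\ \mathcal{I}_{(2,2),r}]\ge 0$, note that $\gamma(0)=n-{\rm rk}(C_2)\le n$, and recall from the proof of Theorem~\ref{thm:obsv} that \eqref{eq:tinv} at $\tilde n$ is exactly $\gamma(\tilde n)=0$, while strong observability means $\gamma(r)=0$ for all sufficiently large $r$. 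The whole theorem then amounts to showing that $\gamma$ vanishes for large $r$ if and only if it vanishes at some $\tilde n\le n$.

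First I would record the nested structure of \eqref{eq:matrices}: since $z_{2,k}=C_2 x_k$ depends only on $x_0$ and $d_{2,0},\dots,d_{2,k-1}$ along the reduced homogeneous dynamics $x_{k+1}=\hat A x_k+G_2 d_{2,k}$, the matrix $[\mathcal{O}_{2,r}\ \mathcal{I}_{(2,2),r}]$ coincides with the submatrix of $[\mathcal{O}_{2,r+1}\ \mathcal{I}_{(2,2),r+1}]$ obtained by deleting its last block row and last block column. Hence the truncation that drops the final input block $d_{2,r}$ is a well-defined linear map $\pi_r$ from $\ker[\mathcal{O}_{2,r+1}\ \mathcal{I}_{(2,2),r+1}]$ into $\ker[\mathcal{O}_{2,r}\ \mathcal{I}_{(2,2),r}]$, and $\ker\pi_r\cong\ker(C_2 G_2)$ because the last block column of $\mathcal{I}_{(2,2),r+1}$ is $[\,0\ \cdots\ 0\ C_2 G_2\,]^\top$. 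Writing $\nu:=\dim\ker(C_2 G_2)$, this gives $\gamma(r)\ge\nu$ for all $r\ge 1$, so $\nu>0$ already rules out strong observability and \eqref{eq:tinv} at every $\tilde n\ge 1$; thus in both directions of the equivalence one may assume $\nu=0$, i.e. ${\rm rk}(C_2G_2)=p-p_H$, which is (II)(b), and ${\rm rk}(CG)\ge{\rm rk}(C_2G_2)$ follows from $C_2G_2=U_2^\top C G V_2$. With $\nu=0$ the map $\pi_r$ is injective, hence $\gamma$ is non-increasing.

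The hard part is the stabilization lemma: if $\gamma(r)=\gamma(r+1)$ for some $r$, then $\gamma(r')=\gamma(r)$ for all $r'\ge r$. I would prove it by induction using a one-step shift on the reduced dynamics, under which $(x_0,d_{2,0},\dots)\in\ker[\mathcal{O}_{2,r}\ \mathcal{I}_{(2,2),r}]$ simply says $C_2 x_k=0$ for $k\le r$. When $\nu=0$, the equality $\gamma(r)=\gamma(r+1)$ forces $\pi_r$ to be onto, i.e. every output-nulling trajectory of length $r+1$ extends one step further; applying this to the one-step shift of an output-nulling trajectory of length $r+2$ shows that it too extends, i.e. $\pi_{r+1}$ is onto and $\gamma(r+1)=\gamma(r+2)$, which closes the induction. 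Granting the lemma, $\gamma$ is a non-increasing sequence of nonnegative integers that strictly decreases until it becomes constant, so (as $\gamma(0)\le n$) it is constant for all $r\ge n$ and its eventual value equals $\gamma(n)$. This yields the equivalence, with the small extra check that $\gamma(\tilde n)=0$ at $\tilde n=0$ also forces $\nu=0$ (from ${\rm rk}(C_2)=n$, $n\ge l\ge{\rm rk}(C_2)$, and the standing assumption ${\rm rk}[G^\top\ H^\top]=p$); and the dimension inequality from the proof of Theorem~\ref{thm:obsv} then forces $\tilde n\ge r_0$ when $l\ne p$, and $l=p=n$, $p_H=0$ otherwise.

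For the remaining necessary conditions at $\tilde n=n$ I would specialize Theorem~\ref{thm:obsv}: (I) is its condition (I) with $r=n$ and $p_{H_r}=p_H$; for (II)(a), full column rank of $[\mathcal{O}_{2,n}\ \mathcal{I}_{(2,2),n}]$ forces ${\rm rk}(\mathcal{O}_{2,n})=n$, and by the Cayley--Hamilton theorem the observability matrix of $(\hat A,C_2)$ has constant rank for $r\ge n-1$, so ${\rm rk}(\mathcal{O}_{2,n-1})=n$ and therefore $(\hat A,C_2)$, hence a fortiori $(\hat A,C)$, is observable; and (II)(b) is the consequence of $\nu=0$ noted above. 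I expect the stabilization lemma — in particular checking that the extension produced for the shifted trajectory supplies exactly the missing last block of the original trajectory — to be the only genuinely delicate point; everything else is bookkeeping layered on Theorem~\ref{thm:obsv}.
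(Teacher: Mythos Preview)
Your proposal is correct and takes a genuinely different route from the paper's proof. The paper argues directly on the block lower-triangular matrix $[\mathcal{O}_{2,n}\ \mathcal{I}_{(2,2),n}]$: using Cayley--Hamilton to freeze ${\rm rk}(\mathcal{O}_{2,r})$ for $r\ge n-1$, it then does a case split showing that (i) rank deficiency at $r=n$ propagates to all $r>n$ (by looking separately at a deficient $\mathcal{O}_{2,n}$, a deficient block column $\mathcal{I}^d_{(2,2),n}$, or a cross-dependence forced by the triangular structure), and (ii) full column rank at $r=n$ likewise propagates. By contrast, you work with the kernels via the output-nulling trajectory interpretation $C_2 x_k=0$ along $x_{k+1}=\hat A x_k+G_2 d_{2,k}$, introduce the defect $\gamma(r)$, identify $\ker\pi_r\cong\ker(C_2G_2)$ to get monotonicity once $\nu=0$, and prove a stabilization lemma by a one-step time shift.

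What each approach buys: the paper's matrix argument is concrete and keeps everything at the level of block columns, but its third sub-case (cross-dependence forcing zero columns of $C_2$ or $C_2G_2$) is stated rather informally and the paper in fact points to \cite{Silverman.1976} for an alternative proof. Your shift/stabilization argument is cleaner and self-contained: it explains \emph{why} $n$ is the threshold (a strictly decreasing nonnegative integer sequence starting at $\gamma(0)\le n$ must stabilize by step $n$), it yields the necessary condition ${\rm rk}(C_2G_2)=p-p_H$ immediately from $\nu=0$, and the inductive step you flagged as ``delicate'' is in fact exactly right---the extension $d'$ produced for the shifted trajectory is literally the missing $d_{2,r+1}$ for the original one, since both compute $x_{r+2}=\hat A x_{r+1}+G_2 d'$. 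Your treatment of the edge case $\tilde n=0$ and the derivation of the remaining necessary conditions from Theorem~\ref{thm:obsv} are also fine.
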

\begin{proof} By applying the Cayley-Hamilton theorem,
we can show that the observable subspace spanned by $\mathcal{O}_{2,n-1}$
is $\hat{A}$-invariant (i.e., ${\rm Ra}( \mathcal{O}_{2,n-1}\hat{A}) \subset {\rm Ra}(\mathcal{O}_{2,n-1})$), which implies that  ${\rm rk} (\mathcal{O}_{2,r})={\rm rk} (\mathcal{O}_{2,n-1})$ for all $r \geq n$. Then, to prove the conditions given in the theorem, we will show that (i) if $\begin{bmatrix} \mathcal{O}_{2,n} & \mathcal{I}_{(2,2),n} \end{bmatrix}$ is rank deficient, then $\begin{bmatrix} \mathcal{O}_{2,r} & \mathcal{I}_{(2,2),r} \end{bmatrix}$ for all $r>n$ is also rank deficient, and (ii) if $\begin{bmatrix} \mathcal{O}_{2,n} & \mathcal{I}_{(2,2),n} \end{bmatrix}$ has full rank, then $\begin{bmatrix} \mathcal{O}_{2,r} & \mathcal{I}_{(2,2),r} \end{bmatrix}$ for all $r>n$ also has full rank.
\begin{enumerate}[(i)]
\item Suppose $\begin{bmatrix} \mathcal{O}_{2,n} & \mathcal{I}_{(2,2),n} \end{bmatrix}$ is rank deficient. This implies one of three cases. In the first, $\mathcal{O}_{2,n}$ is rank deficient. This then implies that $\mathcal{O}_{2,r}$ for all $r>n$ is also rank deficient since ${\rm rk} (\mathcal{O}_{2,r})={\rm rk} (\mathcal{O}_{2,n})$. In the second case, one of the matrices $\{\mathcal{I}^d_{(2,2),n}\}^n_{d=1}$ ($d$-th column matrix of $\mathcal{I}_{(2,2),n}$ each of dimensions $r(l-p_H) \times p-p_H$) is rank deficient, which implies that $\mathcal{I}_{(2,2),r}^{d+r-n}=\begin{bmatrix} 0^\top_{(l-p_H)(r-n) \times (p-p_H)} & \mathcal{I}_{(2,2),n}^{d \top} \end{bmatrix}^\top$ is rank deficient for all $r>n$. And in the third case, some columns of some matrix pair between $\mathcal{O}_{(2,2),n}$ and $\{\mathcal{I}^d_{(2,2),n}\}^n_{d=1}$ are linearly dependent, which by virtue of the lower triangular structure of $\begin{bmatrix} \mathcal{O}_{2,n} & \mathcal{I}_{(2,2),n} \end{bmatrix}$ is only possible if some columns of either $C_{2}$ or $C_{2} G_{2}$ are zero vectors. However, this implies that $C_{2} G_2$ and hence $\mathcal{I}^r_{(2,2),r}=\begin{bmatrix} 0^T_{r(l-p_H) \times (p-p_H)} & (C_2 G_2)^\top\end{bmatrix}^\top$ is rank deficient for all $r>n$. Therefore, in all cases, $\begin{bmatrix} \mathcal{O}_{2,r} & \mathcal{I}_{(2,2),r} \end{bmatrix}$ for all $r>n$ is rank deficient.
\item Suppose now that $\begin{bmatrix} \mathcal{O}_{2,n} & \mathcal{I}_{(2,2),n} \end{bmatrix}$ has full rank. This implies that $\mathcal{O}_{2,n}$ and $\{ \mathcal{I}^d_{(2,2),n}\}^{n}_{d=1}$ have full rank, which in turn implies that for all $r >n$, $\mathcal{O}_{2,r}$ is full rank since ${\rm rk} (\mathcal{O}_{2,r})={\rm rk} (\mathcal{O}_{2,n})$ and $C_2 G_2$ is also full rank, which can be inferred from $\mathcal{I}_{(2,2),n}^n$ being full rank. Hence, since the matrices $\{ \mathcal{I}^d_{(2,2),r}\}^r_{d=1}$ have the form $\begin{bmatrix} 0 & (C_2G_2)^\top & (*) \end{bmatrix}^\top$ with $0$ and $(*)$ of appropriate entries and dimensions, each of these matrices $\{ \mathcal{I}^d_{(2,2),r}\}^r_{d=1}$ have full rank. 
Finally, since the assumption also implies that $C_2$ and $C_2 G_2$ cannot have zero columns and 
the matrix $\begin{bmatrix} \mathcal{O}_{2,r} & \mathcal{I}_{(2,2),r} \end{bmatrix}$ has a lower triangular structure, then this matrix must also have full rank. 
\end{enumerate}

Note that an alternative proof can be found in \cite{Silverman.1976}.
Furthermore, since $\mathcal{O}_{2,n-1}=U_2^\top \mathcal{\hat{O}}$, where $\mathcal{\hat{O}}=\begin{bmatrix} C^\top & (C\hat{A})^\top & \hdots & (C\hat{A}^{n-1})^\top \end{bmatrix}^\top$ and $C_2G_2=U_2^\top CGV_2$, then ${\rm rk}(\mathcal{O}_{2,n})={\rm rk}(\mathcal{O}_{2,n-1}) \leq {\rm min}({\rm rk}(U_2^\top),{\rm rk}(\mathcal{O}))$ and ${\rm rk}(C_2 G_2) \leq {\rm min}({\rm rk}(U_2^\top),{\rm rk}(CG),{\rm rk}(V_2))$. Thus, it follows that ${\rm rk}(\mathcal{\hat{O}})=n$ (i.e., $(\hat{A},C)$ is observable) and ${\rm rk}(C G) \geq p-p_H$ are necessary. 
\end{proof}

\begin{cor} \label{cor:1}
For the time-invariant case, the following statements are equivalent:
\begin{enumerate}[(i)]
\item  ${\rm rk} (\begin{bmatrix} \mathcal{O}_{2,n} & \mathcal{I}_{(2,2),n} \end{bmatrix})=n+n(p- p_{H})$, \label{cond:i}
\item ${\rm rk}\begin{bmatrix} zI-A & -G \\ C & H \end{bmatrix}=n+p$ for all $z \in \mathbb{C}$, \label{cond:ii}
\item ${\rm rk}\left(\begin{bmatrix} zI-\hat{A} & -G_2 \\ C_2 & 0 \end{bmatrix}\right)=n+p-p_H$ for all $z \in \mathbb{C}$. \label{cond:iii}
\end{enumerate}
Moreover, the observability of $(A,C)$ is a necessary condition.
\end{cor}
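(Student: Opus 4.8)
The plan is to prove the two equivalences $(ii)\Leftrightarrow(iii)$ and $(i)\Leftrightarrow(iii)$, which together yield the three-way equivalence, and then to read the necessity of observability of $(A,C)$ directly off condition $(ii)$. The only genuine computation is a short chain of rank-preserving block elementary operations on the Rosenbrock-type matrix $M(z):=\begin{bmatrix} zI-A & -G \\ C & H\end{bmatrix}$; the rest is an appeal to Theorem~\ref{thm:obsvTI} and to the standard characterization of strong observability of a feedthrough-free system.

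For $(ii)\Leftrightarrow(iii)$, I would fix $z\in\mathbb{C}$ and reduce $M(z)$ to block form. Right-multiplying by the nonsingular $\begin{bmatrix} I_n & 0 \\ 0 & V\end{bmatrix}$ and left-multiplying by the nonsingular $\begin{bmatrix} I_n & 0 \\ 0 & T\end{bmatrix}$ preserves rank, and using $G=G_1V_1^\top+G_2V_2^\top$, $HV_2=0$, $TC=\begin{bmatrix} C_1\\ C_2\end{bmatrix}$ together with $T_1HV_1=T_1U_1\Sigma=\Sigma$ and $T_2HV_1=U_2^\top U_1\Sigma=0$ (the identities $T_1U_1=I$, $T_2U_1=0$, $H_1=U_1\Sigma$ following from \eqref{eq:T_k} and the SVD of $H$) converts $M(z)$ into $\begin{bmatrix} zI-A & -G_1 & -G_2 \\ C_1 & \Sigma & 0 \\ C_2 & 0 & 0\end{bmatrix}$. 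Adding $G_1\Sigma^{-1}$ times the second block row to the first turns the $(1,1)$ block into $zI-\hat A$ and the $(1,2)$ block into $0$; subtracting the second block column times $\Sigma^{-1}C_1$ from the first then clears the $(2,1)$ block. The middle block row and column, now containing only the nonsingular $\Sigma$, decouple, so ${\rm rk}\,M(z)=p_H+{\rm rk}\begin{bmatrix} zI-\hat A & -G_2 \\ C_2 & 0\end{bmatrix}$ for every $z$. Since $n+p=p_H+(n+p-p_H)$ and the reduced matrix has exactly $n+p-p_H$ columns, $M(z)$ has full column rank for all $z$ iff $\begin{bmatrix} zI-\hat A & -G_2 \\ C_2 & 0\end{bmatrix}$ does, which is $(ii)\Leftrightarrow(iii)$.

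For $(i)\Leftrightarrow(iii)$, I would use that, by \eqref{eq:sysX}--\eqref{eq:sysY}, after substituting the algebraically recovered $d_{1,k}$ the transformed triple $(\hat A,G_2,C_2)$ has no direct feedthrough, and that $\begin{bmatrix} \mathcal{O}_{2,n} & \mathcal{I}_{(2,2),n}\end{bmatrix}$ from \eqref{eq:matrices} is the observability/invertibility matrix of this triple over the horizon $r=n$. By Theorem~\ref{thm:obsvTI} --- together with the $\hat A$-invariance and lower-triangularity arguments in its proof, which show that once \eqref{eq:tinv} holds for some $\tilde n\le n$ it already holds at $\tilde n=n$ --- condition $(i)$ is equivalent to strong observability of \eqref{eq:system}; and a discrete-time system without direct feedthrough is strongly observable iff it has no invariant zeros, i.e.\ iff $\begin{bmatrix} zI-\hat A & -G_2 \\ C_2 & 0\end{bmatrix}$ has full column rank for all $z\in\mathbb{C}$ (see \cite{Silverman.1976}, or argue via the $z$-transform of the input--output map). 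With the previous paragraph this gives $(i)\Leftrightarrow(ii)\Leftrightarrow(iii)$. Finally, if $\begin{bmatrix} zI-A\\ C\end{bmatrix}$ dropped column rank at some $z$, padding a nonzero kernel vector $v$ with zeros would give $M(z)\begin{bmatrix} v\\ 0\end{bmatrix}=0$, contradicting $(ii)$; hence ${\rm rk}\begin{bmatrix} zI-A\\ C\end{bmatrix}=n$ for all $z$, i.e.\ $(A,C)$ is observable by the PBH test. The hard part will be carrying out the block operations of the second paragraph without slips --- in particular verifying $T_1HV_1=\Sigma$, $T_2HV_1=0$, $HV_2=0$ and that the two eliminations reproduce exactly $\hat A=A-G_1\Sigma^{-1}C_1$ --- while noting that the specific, noise-decorrelating choice of $T$ in \eqref{eq:T_k} plays no role here beyond its nonsingularity.
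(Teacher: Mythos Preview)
Your proposal is correct and follows essentially the same route as the paper: the chain of rank-preserving block operations you describe for $(ii)\Leftrightarrow(iii)$ matches the paper's computation almost verbatim (the paper omits your final column elimination and simply notes that the full-rank $\Sigma$ block decouples), and for $(i)\Leftrightarrow(ii)$ the paper likewise defers to \cite{suda.Mutsuyoshi.1978,Silverman.1976} rather than spelling out the argument. Your derivation of the necessity of observability of $(A,C)$ via the PBH test from $(ii)$ is exactly what the paper does.
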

\begin{proof} The proof of the equivalence of \eqref{cond:i} and \eqref{cond:ii} is fairly involved, and the reader is referred to \cite{suda.Mutsuyoshi.1978,Silverman.1976}  for details.
To relate \eqref{cond:ii} and \eqref{cond:iii}, we use the following \vspace{-0.2cm} 
\begin{align*}
&n+p={\rm rk}\begin{bmatrix} zI-A & -G \\ C & H \end{bmatrix} ={\rm rk}\begin{bmatrix} zI-A & -G \\ C & U \begin{bmatrix} \Sigma & 0 \\ 0 & 0 \end{bmatrix} V^\top \end{bmatrix} \\ 
&= {\rm rk} \begin{bmatrix} I & 0 \\ 0 & T \end{bmatrix} \begin{bmatrix} zI-A & -G \\ C & U \begin{bmatrix} \Sigma & 0 \\ 0 & 0 \end{bmatrix} V^\top \end{bmatrix} \begin{bmatrix} I & 0 \\ 0 & V \end{bmatrix} \\
&={\rm rk}\begin{bmatrix} zI-A & -GV \\ TC & T U\begin{bmatrix} \Sigma & 0 \\ 0 & 0 \end{bmatrix} \end{bmatrix} = {\rm rk}  \begin{bmatrix} zI-A & -G_1 & -G_2\\ C_1 & \Sigma & 0\\ C_2 & 0 & 0 \end{bmatrix} \\ 
&={\rm rk} \begin{bmatrix} I & G_1\Sigma^{-1} & 0 \\ 0 & I & 0\\ 0 & 0 & I\end{bmatrix} \begin{bmatrix} zI-A & -G_1 & -G_2 \\ C_1 & \Sigma &0 \\ C_2 & 0 & 0 \end{bmatrix} \\ 
&={\rm rk}\begin{bmatrix} zI-\hat{A} & 0 & -G_2 \\ C_1 & \Sigma & 0\\ C_2 & 0 & 0 \end{bmatrix}={\rm rk}\begin{bmatrix} zI-\hat{A} & -G_2 \\ C_2 & 0 \end{bmatrix}+p_H
\end{align*}
for all $z \in \mathbb{C}$, where the final equality holds because $\Sigma$ is square and has full rank $p_H$. 
The necessity of observability of the pair $(A,C)$ follows directly from \eqref{cond:ii}.
\end{proof}

\begin{rem}
Note that if ${\rm rk}(H_r)=p$, then $d_{2,r}$ is empty and $\mathcal{D}_r$ contains unknown inputs up to time $r$. 
\end{rem}

A weaker condition than the strong observability is given in the following definition and theorem.
\begin{defn}[Strong detectability] \label{def:det}
The linear system \eqref{eq:system} is \emph{strongly detectable} if
\begin{align*}
y_k=0 \ \forall \, k \geq 0 \quad \textrm{implies} \quad x_k \to 0 \ \textrm{as} \ k \to \infty
\end{align*}
for all input sequences and initial states. 
\end{defn}

\begin{thm}[Strong detectability(time-invariant)] \label{thm:det}
A linear time-invariant discrete-time system is strongly detectable if and only if either of the following holds: 
\begin{enumerate}[(i)]
\item ${\rm rk}\begin{bmatrix} zI-A & -G \\ C & H \end{bmatrix}=n+p$, $\forall z \in \mathbb{C}, |z| \geq 1$,
\item ${\rm rk}\begin{bmatrix} zI-\hat{A} & -G_2 \\ C_2 & 0 \end{bmatrix}=n+p-p_H$, $\forall z \in \mathbb{C}, |z| \geq 1$.
\end{enumerate}
The above conditions are equivalent to the property that the system is minimum-phase (i.e., the invariant zeros of the system matrices in Corollary \ref{cor:1}-\eqref{cond:ii},\eqref{cond:iii} are stable).
\end{thm}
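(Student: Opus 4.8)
The plan is to split the statement into an algebraic part (the equivalence of (i) and (ii), and the minimum-phase reformulation) and the substantive part (either rank condition $\Leftrightarrow$ strong detectability). For (i)$\Leftrightarrow$(ii): this is pointwise in $z$ and is nothing but the rank identity already derived inside the proof of Corollary~\ref{cor:1}; conjugating the left pencil by the nonsingular block multiplications used there gives ${\rm rk}\!\left[\begin{smallmatrix} zI-A & -G \\ C & H\end{smallmatrix}\right]={\rm rk}\!\left[\begin{smallmatrix} zI-\hat A & -G_2 \\ C_2 & 0\end{smallmatrix}\right]+p_H$ for every $z\in\mathbb{C}$, so the left matrix has rank $n+p$ iff the right one has (full column) rank $n+p-p_H$; restricting $z$ to $\{|z|\geq 1\}$ yields (i)$\Leftrightarrow$(ii). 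Letting $z$ range over all of $\mathbb{C}$, the same identity shows the two system matrices share the same invariant zeros, and (i)/(ii) say precisely that none of these has modulus $\geq 1$ (in particular the pencils attain full column rank for $|z|$ large, i.e.\ the systems are left-invertible); once the main equivalence is in place, this is exactly the minimum-phase property.

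Next I would reduce to a feedthrough-free system. Since $T_k$ is nonsingular, $y_k=0$ is equivalent to $z_{1,k}=z_{2,k}=0$; with $w_k=v_k=B_k=D_k=0$, \eqref{eq:sysY} forces $d_{1,k}=-\Sigma_k^{-1}C_{1,k}x_k$ and $C_{2,k}x_k=0$, and substituting the first relation into \eqref{eq:sysX} gives $x_{k+1}=\hat A_k x_k+G_{2,k}d_{2,k}$. Hence (dropping time indices in the time-invariant case) the zero-output trajectories of \eqref{eq:system} are exactly the trajectories of the constrained system $x_{k+1}=\hat A x_k+G_2 d_{2,k}$, $C_2 x_k=0$ with $d_{2,k}$ arbitrary; since this system carries the same state $x_k$, strong detectability of \eqref{eq:system} is equivalent to strong detectability of the feedthrough-free triple $(\hat A,G_2,C_2)$. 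I would also note that the standing assumption ${\rm rk}\begin{bmatrix} G \\ H\end{bmatrix}=p$ makes $G_2=GV_2$ of full column rank $p-p_H$.

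It then remains to prove: $(\hat A,G_2,C_2)$ with $G_2$ of full column rank is strongly detectable iff the reduced pencil $\left[\begin{smallmatrix} zI-\hat A & -G_2 \\ C_2 & 0\end{smallmatrix}\right]$ has full column rank $n+p-p_H$ for all $|z|\geq 1$. I would do this via the geometric description of the output-nulling dynamics (cf.\ \cite{Hautus.1983,Silverman.1976,suda.Mutsuyoshi.1978}). Let $\mathcal{V}^*$ be the weakly unobservable subspace (the largest $(\hat A,G_2)$-invariant subspace contained in $\ker C_2$) and $\mathcal{S}^*$ the strongly reachable subspace. The zero-output state trajectories are precisely the motions remaining in $\mathcal{V}^*$; the pencil attains full column rank for some (equivalently, all large) $z$ iff its normal rank is $n+p-p_H$ iff the system is left-invertible iff $\mathcal{V}^*\cap\mathcal{S}^*=\{0\}$. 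When $\mathcal{V}^*\cap\mathcal{S}^*=\{0\}$ one has ${\rm Ra}(G_2)\cap\mathcal{V}^*=\{0\}$, so at each step the input keeping the output zero is unique, the map $(\hat A+G_2F)|_{\mathcal{V}^*}$ is independent of the friend $F$ with spectrum equal to the set of invariant zeros, and every zero-output trajectory equals $x_k=(\hat A+G_2F)^k x_0$ with $x_0\in\mathcal{V}^*$; thus all such trajectories decay iff $(\hat A+G_2F)|_{\mathcal{V}^*}$ is Schur (all eigenvalues in the open unit disk) iff every invariant zero lies in the open unit disk, which is precisely the rank condition on $\{|z|\geq 1\}$. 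For the converse (contrapositive): if $\mathcal{V}^*\cap\mathcal{S}^*\neq\{0\}$, take $0\neq x_0$ there and a friend $F$ keeping $\mathcal{V}^*\cap\mathcal{S}^*$ invariant with $(\hat A+G_2F)$ unstable on it, so $x_k=(\hat A+G_2F)^k x_0$ is a nondecaying zero-output trajectory; and if the normal rank is full but some invariant zero $z_0$ has $|z_0|\geq 1$, a real invariant subspace of $(\hat A+G_2F)|_{\mathcal{V}^*}$ at $z_0$ supplies one. In either case the system is not strongly detectable.

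The main obstacle is the geometric lemma of the previous paragraph: identifying the zero-output trajectories with motions in $\mathcal{V}^*$, relating left-invertibility to $\mathcal{V}^*\cap\mathcal{S}^*=\{0\}$, and showing that over $\mathcal{V}^*$ the friend-induced dynamics is rigid with spectrum exactly the invariant zeros. This can be developed from the $(\mathcal{V}^*,\mathcal{S}^*)$ subspace recursions, or read off from the Kronecker/Smith form of the reduced pencil as in \cite{Silverman.1976,suda.Mutsuyoshi.1978}; the remaining steps (the rank manipulations and the reduction) are routine.
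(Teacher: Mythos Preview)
Your proposal is correct and considerably more complete than the paper's own argument. For (i)$\Leftrightarrow$(ii) the paper does exactly what you do, invoking the rank identity from the proof of Corollary~\ref{cor:1}. For the main equivalence, however, the paper gives only a brief eigenmode construction: for each $z$ at which $\mathsf P(z)=\left[\begin{smallmatrix} zI-A & -G \\ C & H\end{smallmatrix}\right]$ drops rank it takes a null vector $[-x_z^\top \ u_z^\top]^\top$, observes that $x_k=z^k x_z$, $d_k=z^k u_z$ is a zero-output trajectory, and notes that this decays iff $|z|<1$. That immediately yields the contrapositive (an unstable invariant zero produces a nondecaying zero-output trajectory), but it does not show that \emph{every} zero-output trajectory is a superposition of such modes, so the forward implication is left essentially as a claim.

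Your route---first reducing to the feedthrough-free triple $(\hat A,G_2,C_2)$ and then invoking the geometric description via $\mathcal V^*$ and $\mathcal S^*$---is genuinely different and more robust: it characterizes the entire zero-output dynamics rather than only eigenmodes, treats the non-left-invertible case explicitly, and closes both directions. The price is that you must import the standard facts about friends and the fixed/free spectrum splitting on $\mathcal V^*$, which you correctly flag as the main obstacle; the paper sidesteps this entirely by labeling the result ``a simple generalization'' of the strong-observability corollary.
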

\begin{proof}
This theorem is a simple generalization of Corollary \ref{cor:1} for the case that $\mathsf{P}(z):=\begin{bmatrix} zI-A & -G \\ C & H \end{bmatrix}$ is rank deficient for some $z \in Z_0 \subset \mathbb{C}$ but $|z| < 1$. For each such $z$, there exists $\begin{bmatrix} -x_z^\top & u_z^\top \end{bmatrix}^\top$ in the null space of $\mathsf{P}(z)$. It can be verified that the input sequence $u_k=z^k u_z$ and the initial state $x_z$ leads to the output is $y_k=0$ for all $k\geq 0$ but $x_k=z^k x_z$, where with a slight abuse of notation, $z^k$ represents the product of any permutations of $k$ numbers from $Z_0$. Since $|z|<1$ by assumption, $x_k \rightarrow 0$ as $k \rightarrow \infty$, which coincides with Definition \ref{def:det}.
\end{proof}

\section{Algorithms for Minimum-variance Unbiased Filter for Simultaneous Input and State Estimation} \label{sec:filter}

For the filter design, we consider a recursive three-step filter\footnote{Note that the restriction to a recursive filter will be relaxed and shown to not lead to suboptimality in Theorem \ref{thm:globalULISE} for one of the filter variants.} as proposed in \cite{Gillijns.2007b,Yong.Zhu.Frazzoli.2013}, composed of an \emph{unknown input estimation} step which uses the current measurement and state estimate to estimate the unknown inputs in the best linear unbiased sense, a \emph{time update} step which propagates the state estimate based on the system dynamics, and a \emph{measurement update} step which updates the state estimate using the current measurement. Since this presents various options in terms of the order of execution of each step and the simulations in \cite{Yong.Zhu.Frazzoli.2013} appear to indicate the existence of two possible optimal structures, we propose two variants of a recursive three-step filter for the system described by \eqref{eq:sysX},\eqref{eq:y},\eqref{eq:sysY} to study both of these structures:
\begin{enumerate}[(I)]
\item \emph{Updated Linear Input \& State Estimator} (ULISE), which predicts $d_{1,k}$ using updated state estimate denoted by $\hat{x}_{k|k}$ \eqref{eq:variant1} as in \cite{Yong.Zhu.Frazzoli.2013},
\item \emph{Propagated Linear Input \& State Estimator}(PLISE), that uses propagated state estimate denoted by $\hat{x}^\star_{k|k}$ to predict $d_{1,k}$ \eqref{eq:variant2} as in \cite{Gillijns.2007b}.
\end{enumerate}

Given measurements up to time $k-1$, the three-step recursive filter\footnote{To initialize the filter, arbitrary initial values of $\hat{x}_{0|0}$, $P^x_0$ and $\hat{d}_{1,0}$ can be used since we will show that the ULISE filter is exponentially stable in Theorems \ref{thm:stableULISE} and \ref{thm:convULISE}, while the stability of the PLISE filter is shown in Theorem \ref{thm:convPLISE}. If $y_0$ and $u_0$ are available, we can find the minimum variance unbiased initial estimates given in the initialization of Algorithm \ref{algorithm1} using the linear minimum-variance-unbiased estimator \cite{Sayed.2003}.} can be summarized as follows:

\emph{Unknown Input Estimation}: 
\begin{subequations}
\begin{empheq}[left=\empheqlbrace]{align}
\hat{d}_{1,k}^{\rm \, I} &=M_{1,k} (z_{1,k}-C_{1,k} \hat{x}_{k|k}-D_{1,k} u_k) \label{eq:variant1} \\
\hat{d}_{1,k}^{\rm \, II} &=M_{1,k} (z_{1,k}-C_{1,k} \hat{x}^\star_{k|k}-D_{1,k} u_k) \label{eq:variant2} \quad \qquad \;
\end{empheq}
\end{subequations}
\begin{align}
\hat{d}_{2,k-1}&=M_{2,k} (z_{2,k}-C_{2,k} \hat{x}_{k|k-1}-D_{2,k} u_k) \label{eq:d2}\\
\hat{d}_{k-1}&= V_{1,k-1} \hat{d}_{1,k-1} + V_{2,k-1} \hat{d}_{2,k-1} \label{eq:d} 
\end{align}
\emph{Time Update}:
\begin{align}
\hspace{-0.1cm} \hat{x}_{k|k-1}&=A_{k-1} \hat{x}_{k-1 | k-1} + B_{k-1} u_{k-1} + G_{1,k-1} \hat{d}_{1,k-1} \label{eq:time} \\
\hat{x}^\star_{k|k}&=\hat{x}_{k|k-1}+G_{2,k-1} \hat{d}_{2,k-1} \label{eq:xstar}
\end{align}
\emph{Measurement Update}:
\begin{align}
\nonumber \hat{x}_{k|k}&=\hat{x}^\star_{k|k} +L_k(y_k-C_k \hat{x}^\star_{k|k}-D_k u_k) \\
&= \hat{x}^\star_{k|k} +\tilde{L}_k(z_{2,k}-C_{2,k} \hat{x}^\star_{k|k}-D_{2,k} u_k)  \quad \quad \label{eq:stateEst}
\end{align}
where $\hat{x}_{k-1|k-1}$, $\hat{d}_{1,k-1}$, $\hat{d}_{2,k-1}$ and $\hat{d}_{k-1}$ denote the optimal estimates of $x_{k-1}$, $d_{1,k-1}$, ${d}_{2,k-1}$ and $d_{k-1}$; 
$L_k \in \mathbb{R}^{n \times l}$, $\tilde{L}_k := L_k U_{2,k}\in \mathbb{R}^{n \times (l-p_{H_k})}$, $M_{1,k} \in \mathbb{R}^{p_{H_k} \times p_{H_k}}$ and $M_{2,k} \in \mathbb{R}^{(p-p_{H_k}) \times (l-p_{H_k})}$ are filter gain matrices that are chosen to minimize the state and input error covariances. 
Note that we applied $L_k=L_k U_{2,k} U_{2,k}^\top$ 
in \eqref{eq:stateEst},
which we will justify in Lemma \ref{lem:unbiased}.

The above recursive three-step filter represents a unified filter for simultaneously estimating unknown input and state for systems with an arbitrary direct feedthrough matrix, thus relaxing the assumptions on the direct feedthrough matrix in \cite{Gillijns.2007,Gillijns.2007b,Yong.Zhu.Frazzoli.2013}. By a suitable system transformation given in \eqref{eq:T_k}, the unknown input is decomposed into two components, $d_{1,k}$ and $d_{2,k}$; and similarly, the output equation into two orthogonal projections, $z_{1,k}$ and $z_{2,k}$, one with no direct feedthrough and the other with a full-rank feedthrough matrix. Hence, in a nutshell, the $d_{1,k}$ component of the unknown input can be estimated in the best linear unbiased sense by choosing $M_{1,k}$ as in \cite{Yong.Zhu.Frazzoli.2013,Gillijns.2007b} and the $d_{2,k}$ component by choosing $M_{2,k}$ as in \cite{Gillijns.2007}. On the other hand, the gain matrix $L_k$ is chosen to minimize the state estimate error covariance in an update similar to the Kalman filter. In fact, the proposed filter can be shown to be a generalization of the Kalman filter to systems with unknown inputs (see Section \ref{sec:kalman}). 

\begin{algorithm}[!t] \small
\caption{ULISE algorithm}\label{algorithm1}
\begin{algorithmic}[1]
\State Initialize: $\hat{x}_{0|0}=\mathbb{E}[x_0]$; 
$P^x_{0|0}=\mathcal{P}^x_0$ ; 
$\hat{A}_{0}=A_{0}-G_{1,0}\Sigma_{0}^{-1} C_{1,0}$; $\hat{Q}_{0}=G_{1,0}\Sigma_{0}^{-1} R_{1,0}\Sigma_{0}^{-1} G_{1,0}^\top +Q_0$; $\hat{d}_{1,0}=\Sigma_0^{-1} (z_{1,0}-C_{1,0} \hat{x}_{0|0}-D_{1,0} u_0)$; $P^d_{1,0}=\Sigma_0^{-1}(C_{1,0} P^x_{0|0} C_{1,0}^\top+R_{1,0})\Sigma_0^{-1}$; 
 \For {$k =1$ to $N$}
\LineComment{Estimation of $d_{2,k-1}$ and $d_{k-1}$}
\State $\tilde{P}_k=\hat{A}_{k-1} P^x_{k-1|k-1} \hat{A}_{k-1}^\top +\hat{Q}_{k-1}$;
\State $\tilde{R}_{2,k}=C_{2,k} \tilde{P}_k C_{2,k}^\top+R_{2,k}$;
\State $P^d_{2,k-1}=(G_{2,k-1}^\top C_{2,k}^\top \tilde{R}_{2,k}^{-1} C_{2,k} G_{2,k-1})^{-1}$;
\State $M_{2,k}=P^d_{2,k-1} G_{2,k-1}^\top C_{2,k}^\top \tilde{R}^{-1}_{2,k}$;
\State $\hat{x}_{k|k-1}=A_{k-1} \hat{x}_{k-1|k-1}+B_{k-1} u_{k-1}+G_{1,k-1} \hat{d}_{1,k-1}$;
\State $\hat{d}_{2,k-1}=M_{2,k} (z_{2,k}-C_{2,k} \hat{x}_{k|k-1}-D_{2,k} u_k)$;
\State $\hat{d}_{k-1} =V_{1,k-1} \hat{d}_{1,k-1} + V_{2,k-1} \hat{d}_{2,k-1}$; 
\State $P^d_{12,k-1}=M_{1,k-1} C_{1,k-1} P^{x}_{k-1|k-1} A_{k-1}^\top C_{2,k}^\top M_{2,k}^\top$
\Statex \hspace{1.5cm} $-P^{d}_{1,k-1} G_{1,k-1}^\top C_{2,k}^\top M_{2,k}^\top$;
\State $P^d_{k-1}=V_{k-1} \begin{bmatrix} P^d_{1,k-1} & P^d_{12,k-1} \\ P^{d \top}_{12,k-1} & P^d_{2,k-1} \end{bmatrix} V_{k-1}^\top$;
\LineComment{Time update}
\State $\hat{x}^\star_{k|k}=\hat{x}_{k|k-1}+G_{2,k-1} \hat{d}_{2,k-1}$;
\State $P^{\star x}_{k|k}=G_{2,k-1} M_{2,k} R_{2,k} M_{2,k}^\top G_{2,k}^\top$
\Statex \hspace{1cm} $+(I-G_{2,k-1}M_{2,k}C_{2,k})\tilde{P}_k(I-G_{2,k-1}M_{2,k}C_{2,k})^\top$;
\State $\tilde{R}^\star_k=C_k P^{\star x}_{k|k} C_k^\top +R_k -C_k G_{2,k-1} M_{2,k} U_{2,k}^\top R_k$
\Statex \hspace{1.5cm}  $-R_k U_{2,k} M_{2,k}^\top G_{2,k-1}^\top C_k$;
\LineComment{Measurement update}
\State $K_k=P^{\star x}_{k|k} C_k^\top - G_{2,k-1} M_{2,k} U_{2,k}^\top R_k$; 
\State $M^\star_{1,k}:=\Sigma_k^{-1} (U_{1,k}^\top \tilde{R}^{\star \dagger}_k U_{1,k})^{-1} U_{1,k}^\top \tilde{R}^{\star \dagger}_k$; 
\State $L_k=K_k(I_l-U_{1,k} \Sigma_k M_{1,k}^\star)^\top \tilde{R}^{\star \dagger}_k$; 
\State $\hat{x}_{k|k}=\hat{x}^\star_{k|k}+L_k(y_k-C_k \hat{x}^\star_{k|k}-D_k u_k)$;
\State $P^x_{k|k}= (I-L_k C_k)G_{2,k-1}M_{2,k}U_{2,k}^\top R_{k}L_k^\top$
\Statex \hspace{1.5cm} $+ L_k R_{k} U_{2,k} M_{2,k}^\top G_{2,k-1}^\top (I-L_k C_k)^\top$
\Statex \hspace{1.5cm} $+(I-L_k C_k) P^{\star x}_{k|k} (I-L_k C_k)^\top+L_k R_k L_k^\top$;
\LineComment{Estimation of $d_{1,k}$}
\State $\tilde{R}_{1,k}=C_{1,k}P^x_{k|k}C_{1,k}^\top+R_{1,k}$; 
\State $M_{1,k}=\Sigma_k^{-1}$;
\State $P^d_{1,k}=M_{1,k} \tilde{R}_{1,k} M_{1,k}$;
\State $\hat{d}_{1,k}=M_{1,k} (z_{1,k}-C_{1,k} \hat{x}_{k|k}-D_{1,k} u_k)$;
\State $\hat{A}_{k}=A_{k}-G_{1,k}M_{1,k} C_{1,k}$;
\State $\hat{Q}_{k}=G_{1,k}M_{1,k}R_{1,k}M_{1,k}^\top G_{1,k}^\top +Q_k$;
\EndFor
\end{algorithmic}
\end{algorithm}

\begin{algorithm}[!h] \small
\caption{PLISE algorithm}\label{algorithm2}
\begin{algorithmic}[1]
\State Initialize: $\hat{x}_{0|0}=\mathbb{E}[x_0]$; $\hat{x}^\star_{0|0}=\mathbb{E}[x_0]$; 
$P^x_{0|0}=\mathcal{P}^x_0$ ; $P^{\star x}_{0|0}=\mathcal{P}^{ x}_0$ ; 
$\hat{A}_{0}=A_{0}-G_{1,0}\Sigma_{0}^{-1} C_{1,0}$; $\hat{Q}_{0}=G_{1,0}\Sigma_{0}^{-1} R_{1,0}\Sigma_{0}^{-1} G_{1,0}^\top +Q_0$; 
$\hat{d}_{1,0}=\Sigma_{0}^{-1} (z_{1,0}-C_{1,0} \hat{x}^\star_{0|0}-D_{1,0} u_0)$;
$P^d_{1,0}=\Sigma_0^{-1}(C_{1,0} P^{\star x}_{0|0} C_{1,0}^\top+R_{1,0})\Sigma_0^{-1}$; $P^{xd}_{1,0}=- P^{\star x}_{0|0} C_{1,0}^\top \Sigma_0^{-1}$; 
 \For {$k =1$ to $N$}
\LineComment{Estimation of $d_{2,k-1}$ and $d_{k-1}$}
\State $\tilde{P}_k=\hat{A}_{k-1} P^x_{k-1|k-1} \hat{A}_{k-1}^\top +\hat{Q}_{k-1}$;
\State $\tilde{R}_{2,k}=C_{2,k} \tilde{P}_k C_{2,k}^\top+R_{2,k}$;
\State $P^d_{2,k-1}=(G_{2,k-1}^\top C_{2,k}^\top \tilde{R}_{2,k}^{-1} C_{2,k} G_{2,k-1})^{-1}$;
\State $M_{2,k}=P^d_{2,k-1} G_{2,k-1}^\top C_{2,k}^\top \tilde{R}^{-1}_{2,k}$;
\State $\hat{x}_{k|k-1}=A_{k-1} \hat{x}_{k-1|k-1}+B_{k-1} u_{k-1} +G_{1,k-1} \hat{d}_{1,k-1}$;
\State $\hat{d}_{2,k-1}=M_{2,k} (z_{2,k}-C_{2,k} \hat{x}_{k|k-1}-D_{2,k} u_k)$;
\State $P^{xd}_{2,k-1}=-P^x_{k-1|k-1} A_{k-1}^\top C_{2,k}^\top M_{2,k}^{\top}$
\Statex \hspace{1.5cm} $-P^{xd}_{1,k-1} G_{1,k-1}^\top C_{2,k}^\top M_{2,k}^\top$;
\State $P^{d}_{12,k-1}=-P^{xd \; \top}_{1,k-1} A_{k-1}^\top C_{2,k}^\top M_{2,k}^{\top}$
\Statex \hspace{1.5cm}  $-P^{d}_{1,k-1} G_{1,k-1}^\top C_{2,k}^\top M_{2,k}^\top$;
\State $\hat{d}_{k-1} =V_{1,k-1} \hat{d}_{1,k-1} + V_{2,k-1} \hat{d}_{2,k-1}$; 
\State $P^d_{k-1}=V_{k-1} \begin{bmatrix} P^d_{1,k-1} & P^d_{12,k-1} \\ P^{d \top}_{12,k-1} & P^d_{2,k-1} \end{bmatrix} V_{k-1}^\top$;
\LineComment{Time update}
\State $\hat{x}^\star_{k|k}=\hat{x}_{k|k-1}+G_{2,k-1} \hat{d}_{2,k-1}$;
\State $P^{\star x}_{k|k}=\begin{bmatrix} A_{k-1} & G_{1,k-1} & G_{2,k-1} \end{bmatrix} $
\Statex \hspace{1.2cm} $\begin{bmatrix} P^x_{k-1|k-1} & P^{xd}_{1,k-1} & P^{xd}_{2,k-1} \\ P^{xd\; \top}_{1,k-1} & P^d_{1,k-1} & P^d_{12,k-1} \\ P^{xd\; \top}_{2,k-1} & P^{d \; \top}_{12,k-1} & P^d_{2,k-1} \end{bmatrix} \begin{bmatrix} A_{k-1}^\top \\ G_{1,k-1}^\top \\ G_{2,k-1}^\top \end{bmatrix}+Q_{k-1}$
\Statex \hspace{1.5cm} $-G_{2,k-1} M_{2,k} C_{2,k} Q_{k-1}-Q_{k-1} C_{2,k}^\top M_{2,k}^\top G_{2,k}^\top$;
\State $\tilde{R}^\star_k=C_k P^{\star x}_{k|k} C_k^\top +R_k -C_k G_{2,k-1} M_{2,k} U_{2,k}^\top R_k$
\Statex \hspace{1.5cm} $-R_k U_{2,k} M_{2,k}^\top G_{2,k-1}^\top C_k^\top$;
\LineComment{Estimation of $d_{1,k}$}
\State $\tilde{R}_{1,k}=C_{1,k}P^{\star \, x}_{k|k}C_{1,k}^\top+R_{1,k}$;
\State $M_{1,k}=\Sigma_k^{-1}$;
\State $P^d_{1,k}=M_{1,k} \tilde{R}_{1,k} M_{1,k}$;
\State $\hat{d}_{1,k}=M_{1,k} (z_{1,k}-C_{1,k} \hat{x}^\star_{k|k}-D_{1,k} u_k)$;
\State $\hat{A}_{k}=A_{k}-G_{1,k}M_{1,k} C_{1,k}$;
\State $\hat{Q}_{k}=G_{1,k}M_{1,k}R_{1,k}M_{1,k}^\top G_{1,k}^\top +Q_k$;
\LineComment{Measurement update}
\State $K_k=P^{\star x}_{k|k} C_k^\top - G_{2,k-1} M_{2,k} U_{2,k}^\top R_k$; 
\State $M^\star_{1,k}:=\Sigma_k^{-1} (U_{1,k}^\top \tilde{R}^{\star \dagger}_k U_{1,k})^{-1} U_{1,k}^\top \tilde{R}^{\star \dagger}_k$;
\State $L_k=K_k (I-U_{1,k} \Sigma_k M_{1,k}^\star)U_{2,k}^\top$;
\State $\hat{x}_{k|k}=\hat{x}^\star_{k|k}+L_k(y_k-C_k \hat{x}^\star_{k|k}-D_k u_k)$;
\State $P^x_{k|k}= L_k R_k L_k^\top+(I-L_k C_k)G_{2,k-1}M_{2,k}U_{2,k}^\top R_{k}L_k^\top$
\Statex \hspace{1.5cm} $+ L_k R_{k} U_{2,k} M_{2,k}^\top G_{2,k-1}^\top (I-L_k C_k)^\top$
\Statex \hspace{1.5cm} $+(I-L_k C_k) P^{\star x}_{k|k} (I-L_k C_k)^\top$;
\State $P^{xd}_{1,k}=-(I-L_k C_k)P^{\star x}_{k|k} C_{1,k}^\top M_{1,k}^\top$
\Statex \hspace{1.5cm} $- L_k R_k T_{2,k}^\top M_{2,k}^\top G_{2,k-1}^\top C_{1,k}^\top M_{1,k}^\top$;
\EndFor
\end{algorithmic}
\end{algorithm}

Note that the three steps are not given in the order of execution. 
In ULISE (see Algorithm \ref{algorithm1}), the estimation of $d_{2,k-1}$ is carried out before the time update, followed by the measurement update and finally, the estimation of $d_{1,k}$; while PLISE (see Algorithm \ref{algorithm2}) first computes $\hat{d}_{2,k-1}$, followed by the time update, the estimation of $d_{1,k}$ and the measurement update. Note also that Algorithms \ref{algorithm1} and \ref{algorithm2} for ULISE and PLISE are given with significant simplifications and a particular
choice of $\Gamma_k$ that will be further expounded in Section \ref{sec:analysis}. 

For both structures of the three-step filter variants, 
Algorithms \ref{algorithm1} and \ref{algorithm2} provide the `best' estimates of the states and unknown inputs in the minimum squared error sense, as given in the following lemma and will be proven in Section \ref{sec:analysis}. 

\begin{lem} \label{lem:main}
Let the initial state estimate $\hat{x}_{0|0}$ be unbiased. If ${\rm rk}(C_{2,k} G_{2,k-1})=p-p_{H_{k-1}}$, then the ULISE and PLISE algorithms given in Algorithms \ref{algorithm1} and \ref{algorithm2} provide the unbiased, best linear estimate (BLUE) of the unknown input and the minimum-variance unbiased estimate of system states.
\end{lem}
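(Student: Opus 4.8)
The plan is to prove the lemma by induction on the time index $k$, carrying along \emph{two} properties simultaneously: \emph{(a)} every estimate produced in iteration $k$ --- $\hat x_{k|k-1}$, $\hat x^\star_{k|k}$, $\hat x_{k|k}$, $\hat d_{1,k}$, $\hat d_{2,k-1}$ and $\hat d_{k-1}$ --- is unbiased; and \emph{(b)} the matrices $\tilde P_k$, $P^{\star x}_{k|k}$, $P^x_{k|k}$, $P^d_{1,k}$, $P^d_{2,k-1}$, $P^d_{12,k-1}$ (and, for PLISE, the cross-covariances $P^{xd}_{j,k}$) computed by the algorithms are exactly the covariances of the corresponding errors, and these are the smallest attainable over all linear unbiased estimators of the prescribed three-step recursive form. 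The two must be proven together because the gains $M_{2,k}$ and $L_k$ are themselves defined through the previous-step error covariances.

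For unbiasedness I would introduce the error variables $\tilde x_{k|k-1}:=x_k-\hat x_{k|k-1}$, $\tilde x^\star_{k|k}:=x_k-\hat x^\star_{k|k}$, $\tilde x_{k|k}:=x_k-\hat x_{k|k}$, $\tilde d_{1,k}:=d_{1,k}-\hat d_{1,k}$, $\tilde d_{2,k}:=d_{2,k}-\hat d_{2,k}$, and derive their one-step recursions from \eqref{eq:sysX}--\eqref{eq:sysY} and \eqref{eq:variant1}--\eqref{eq:stateEst}. The base case follows from the hypothesis that $\hat x_{0|0}$ is unbiased together with $M_{1,0}=\Sigma_0^{-1}$. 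In the induction step the decisive algebraic facts are: (i) $M_{1,k}\Sigma_k=I$ (since $M_{1,k}=\Sigma_k^{-1}$ and $\Sigma_k$ is square and nonsingular), so the innovation used for $\hat d_{1,k}$ contains $d_{1,k}$ exactly and the error $\tilde d_{1,k}=-\Sigma_k^{-1}C_{1,k}\tilde x_{k|k}-\Sigma_k^{-1}v_{1,k}$ is zero-mean; (ii) $M_{2,k}C_{2,k}G_{2,k-1}=I$, which holds precisely because the rank hypothesis ${\rm rk}(C_{2,k}G_{2,k-1})=p-p_{H_{k-1}}$ makes $C_{2,k}G_{2,k-1}$ of full column rank and hence makes the weighted left-inverse $M_{2,k}=P^d_{2,k-1}G_{2,k-1}^\top C_{2,k}^\top\tilde R_{2,k}^{-1}$, with $P^d_{2,k-1}=(G_{2,k-1}^\top C_{2,k}^\top\tilde R_{2,k}^{-1}C_{2,k}G_{2,k-1})^{-1}$, well defined; and (iii) $L_k=L_kU_{2,k}U_{2,k}^\top$ (Lemma \ref{lem:unbiased}), so that the measurement update \eqref{eq:stateEst} only sees the feedthrough-free component $z_{2,k}=C_{2,k}x_k+D_{2,k}u_k+v_{2,k}$ and cannot reintroduce the unknown $d_{1,k}$. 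Telescoping the recursions, using the time-update identity $A_{k-1}\tilde x_{k-1|k-1}+G_{1,k-1}\tilde d_{1,k-1}=\hat A_{k-1}\tilde x_{k-1|k-1}-G_{1,k-1}\Sigma_{k-1}^{-1}v_{1,k-1}$, and taking expectations then shows every error has zero mean.

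For the covariance and optimality part, I would, for each gain, pose the corresponding constrained quadratic program --- minimise the trace of the resulting error covariance subject to the unbiasedness constraint --- and show the algorithm's choice is its unique minimiser. For $\hat d_{2,k-1}$ this is the Gauss--Markov problem $\min_{M_{2,k}}{\rm tr}(M_{2,k}\tilde R_{2,k}M_{2,k}^\top)$ subject to $M_{2,k}C_{2,k}G_{2,k-1}=I$, whose solution is the stated weighted (pseudo-)inverse with weight $\tilde R_{2,k}^{-1}$ and optimal value $P^d_{2,k-1}$; here $\tilde R_{2,k}=C_{2,k}\tilde P_kC_{2,k}^\top+R_{2,k}$ is verified to be the covariance of the residual $z_{2,k}-C_{2,k}\hat x_{k|k-1}-D_{2,k}u_k$ using the uncorrelatedness of $w_k$, $v_{1,k}$, $v_{2,k}$ and $x_0$ recorded after \eqref{eq:R12}. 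For $\hat d_{1,k}$ the constraint $M_{1,k}\Sigma_k=I$ forces $M_{1,k}=\Sigma_k^{-1}$ uniquely, so there is nothing to optimise and BLUE-ness follows from the state estimate fed into \eqref{eq:variant1}/\eqref{eq:variant2} being itself optimal. For $L_k$ the problem is the generalised-least-squares / Kalman-type minimisation of ${\rm tr}(P^x_{k|k})$ over $L_k$ with $L_k=L_kU_{2,k}U_{2,k}^\top$; substituting the recursion for $\tilde x_{k|k}$ in terms of $\tilde x^\star_{k|k}$, $v_k$ and the $d_2$-estimation residual, and completing the square, produces the formulas for $K_k$, $M^\star_{1,k}$, $L_k$ and the closed form for $P^x_{k|k}$ in Algorithms \ref{algorithm1}--\ref{algorithm2}; the remaining bookkeeping ($\tilde P_k$, $P^{\star x}_{k|k}$, $P^d_{12,k-1}$, $P^d_{k-1}=V_{k-1}[\cdots]V_{k-1}^\top$, and the PLISE cross-covariances) is obtained by direct substitution into the error recursions.

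The main obstacle --- and what distinguishes this from a textbook Kalman-filter derivation --- is the possible rank deficiency of $\tilde R^\star_k$: because the filter must handle an arbitrary (in particular rank-deficient) feedthrough matrix, the weight in the measurement-update least-squares problem need not be invertible, which is why the algorithms use the Moore--Penrose pseudo-inverse $\tilde R^{\star\dagger}_k$. I would need to verify the consistency/range conditions (e.g. that the residual lies in ${\rm Ra}(\tilde R^\star_k)$ and that $U_{1,k}^\top\tilde R^{\star\dagger}_kU_{1,k}$ is invertible) so that the generalised-least-squares solution is well posed and still yields an unbiased minimum-variance gain; this is exactly where the transformation \eqref{eq:T_k} and the definitions of $K_k$ and $M^\star_{1,k}$ are used. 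A secondary point is to treat ULISE and PLISE uniformly: the only structural difference is whether $\hat x_{k|k}$ or $\hat x^\star_{k|k}$ enters \eqref{eq:variant1}/\eqref{eq:variant2}, which changes the cross-covariance bookkeeping but not the unbiasedness argument, and both remain BLUE at each recursion --- the stronger \emph{global} optimality of ULISE being deferred to Theorem \ref{thm:globalULISE}.
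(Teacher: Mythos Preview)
Your proposal is correct and follows essentially the same route as the paper: the paper proves Lemma~\ref{lem:main} via Lemma~\ref{lem:unbiased} (inductive unbiasedness from $M_{1,k}\Sigma_k=I$, $M_{2,k}C_{2,k}G_{2,k-1}=I$, $L_kU_{1,k}=0$), Theorem~\ref{thm:mse} (GLS/Gauss--Markov for $M_{1,k},M_{2,k}$), and Theorem~\ref{thm:mvu_state} (constrained trace minimisation for $L_k$), exactly matching your three pieces. The only cosmetic difference is that the paper handles the rank deficiency of $\tilde R^\star_k$ by introducing an auxiliary full-rank parameterisation $L_k=\overline L_k\Gamma_k$ and solving for $\overline L_k$ via Lagrange multipliers, rather than by verifying range conditions for the Moore--Penrose pseudoinverse directly as you suggest; either device yields the same gain formula.
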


In particular, we can show that ULISE is globally optimal over the class of linear state and input estimators. In other words, the structure of ULISE is optimal. Moreover,  the initial biases in the state and input estimates of ULISE decay exponentially if some conditions of uniform stabilizability and detectability are satisfied. Specifically for the time-invariant systems, conditions for the convergence of the error covariance matrix, $P^x_{k|k}$, as well as the filter gains, $L_k$, $M_{1,k}$ and $M_{2,k}$, to steady-state are provided.
To state these claims, which will be proven in Sections \ref{sec:optimality} and \ref{sec:stableULISE}, we first define: $\tilde{M}_{2,k}:=(C_{2,k} G_{2,k-1})^\dagger$, 
$\hat{Q}_{k}=Q_k+G_{1,k}\Sigma_k^{-1}R_{1,k}\Sigma_k^{-1 \top} G_{1,k}^\top$, $\hat{A}_k=A_k-G_{1,k}M_{1,k} C_{1,k}$, $\tilde{A}_k:=(I-G_{2,k-1} \tilde{M}_{2,k} C_{2,k})\hat{A}_k+G_{2,k-1} \tilde{M}_{2,k} C_{2,k}$ and $\tilde{Q}_k=(I-G_{2,k-1} \tilde{M}_{2,k} C_{2,k})\hat{Q}_{k-1}(I-G_{2,k-1} \tilde{M}_{2,k} C_{2,k})^\top$.

\begin{thm}[Global Optimality of ULISE] \label{thm:globalULISE}
Let ${\rm rk}(C_{2,k} G_{2,k-1})=p-p_{H_{k-1}}$ and the initial state estimate $\hat{x}_{0|0}$ be unbiased. Then, the ULISE algorithm is globally optimal (over the class of all linear state and input estimators). 
\end{thm}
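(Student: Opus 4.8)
The plan is to show that, at every time $k$, the estimates $\hat x_{k|k}$, $\hat d_{1,k}$ and $\hat d_{2,k-1}$ (hence $\hat d_{k-1}$) produced by Algorithm~\ref{algorithm1} coincide with the Gauss--Markov / generalized least squares (GLS) optimal estimates computed from the \emph{entire} record $\{y_0,\dots,y_k\}$ together with the unbiased prior $\hat x_{0|0}$ (covariance $\mathcal P^x_0$), with no recursive restriction imposed. Since the GLS estimate is, by Gauss--Markov, the minimum-variance estimator within the class of all affine unbiased functionals of the data, and is simultaneously the BLUE of the unknown inputs, this identification is exactly global optimality; it also discharges the footnote claim that recursiveness is lossless. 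The building block is Lemma~\ref{lem:main}, which already gives unbiasedness and the three-step-recursive optimality; what remains is to certify that the recursive summary loses no information.

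First I would set up the batch model: stacking the transformed outputs as in \eqref{eq:matrices}, adjoining $\hat x_{0|0}$ as a pseudo-observation, and eliminating $\mathcal D_{1,r}$ through the relation derived in the proof of Theorem~\ref{thm:obsv}, so the data obey a linear model
\[
\begin{bmatrix}\mathcal O_{2,r}&\mathcal I_{(2,2),r}\end{bmatrix}\begin{bmatrix}x_0\\\mathcal D_{2,r}\end{bmatrix}=\bar{\mathcal Z}_{2,r}+(\text{noise}),
\]
with the known-input terms absorbed into $\bar{\mathcal Z}_{2,r}$ and a noise covariance that is block structured because the transform \eqref{eq:T_k} was chosen to decorrelate $v_{1,k}$ from $v_{2,k}$. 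The hypothesis $\mathrm{rk}(C_{2,k}G_{2,k-1})=p-p_{H_{k-1}}$ is precisely the per-column full-rank condition underlying the identifiability of the input blocks in Theorem~\ref{thm:obsv}, and with the prior on $x_0$ it makes the (pseudoinverse-regularized) GLS solution well defined and Gauss--Markov optimal over all linear unbiased estimators of $(x_0,\mathcal D_{2,r})$; back-substitution then yields the globally optimal estimates of $\mathcal D_{1,r}$ and of $x_r$.

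The core is an induction on $k$ showing the recursion reproduces this batch solution. I would carry two invariants: (i) $\hat x_{k|k}$ equals the batch GLS estimate of $x_k$ from $\{y_0,\dots,y_k\}$ with error covariance $P^x_{k|k}$; and (ii) the residual $x_k-\hat x_{k|k}$ is uncorrelated with the not-yet-used noises $w_k, v_{k+1}$ and with the next innovation, so $(\hat x_{k|k},P^x_{k|k})$ is a sufficient statistic for the second-order problem, the deterministic $d_k$ entering as a nuisance parameter handled through the unbiasedness constraints rather than by averaging. For the step $k-1\to k$ I would (a) re-derive \eqref{eq:d2} as the constrained-GLS estimate of the feedthrough-free block $d_{2,k-1}$ from the fresh datum $z_{2,k}$ and the sufficient statistic, matching $M_{2,k}$ in Algorithm~\ref{algorithm1}; (b) verify that the time update \eqref{eq:time}--\eqref{eq:xstar} transports the sufficient-statistic property across the dynamics; (c) verify that the measurement update \eqref{eq:stateEst} with the gain $L_k$ of Algorithm~\ref{algorithm1} is the GLS contraction that folds $z_{2,k}$ into the state while remaining unbiased against the still-unknown $d_{1,k}$ --- this is where the pseudoinverse $\tilde R_k^{\star\dagger}$ and the generalized-least-squares step enter, and it is exactly why $\hat d_{1,k}$ is formed from the \emph{updated} $\hat x_{k|k}$ rather than the propagated $\hat x^\star_{k|k}$ (using the latter, as PLISE does, discards the component of $y_k$ absorbed by $L_k$, which is the source of PLISE's suboptimality). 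Matching the resulting closed form against the partitioned Gauss--Markov normal equations closes the step.

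The main obstacle is step (c) together with establishing invariant (ii): one must show that minimizing the state error covariance over the \emph{recursive} gains $M_{1,k},M_{2,k},L_k$ --- i.e., over functionals of $(\hat x_{k-1|k-1},y_k)$ alone --- incurs no loss relative to minimizing over arbitrary affine functionals of $\{y_0,\dots,y_k\}$, and that this persists when the direct feedthrough is rank deficient, where the GLS problem has an effectively singular weighting and must be handled by projection / Moore--Penrose arguments (the generalized least squares estimation invoked in the introduction). Once this sufficiency bookkeeping is in place, the algebraic identification of the recursive gains with the partitioned Gauss--Markov solution is routine but lengthy.
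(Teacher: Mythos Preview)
Your plan is sound in principle but takes a genuinely different route from the paper. The paper does \emph{not} set up a batch GLS model or run an induction showing the recursion reproduces it. For the state estimate, it simply rewrites ULISE's update \eqref{eq:stateEst} in the form $\hat x_{k|k}=\hat A_{k-1}\hat x_{k-1|k-1}+\cdots+\overline K_k(z_{2,k}-\cdots)$, observes that this is the optimal form identified in \cite[Remark~3]{Cheng.2009}, and then cites \cite{Kerwin.Prince.2000} for global optimality of the state over all linear unbiased estimators. For the input estimate, it parameterizes an arbitrary linear unbiased $\hat d^g_{k-1}$ as an affine combination of $\hat x_{0|0}$ and the innovations $\{\tilde z_{1,i},\tilde z_{2,i}\}_{i\le k}$, shows that unbiasedness alone forces $\chi_{1,k-1}(k)=V_{1,k-1}M_{1,k-1}$, $\chi_{2,k}(k)=V_{2,k-1}M_{2,k}$, $\chi_{1,k}(k)=\chi_0(k)=0$, and $\chi_{2,i}(k)C_{2,i}G_{2,i-1}=0$ for $i<k$, and then a one-line orthogonality check $\mathbb E[\tilde d_{k-1}(\chi_{2,i}(k)\tilde z_{2,i})^\top]=0$ shows the residual free terms can only inflate the MSE, so they must vanish. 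What each approach buys: the paper's argument is short because it outsources the state part to existing results and handles the input part by a direct orthogonality decomposition, with no sufficient-statistic bookkeeping at all; your approach would be self-contained and would treat state and input uniformly, but your step~(c) and invariant~(ii) essentially rederive the content of \cite{Kerwin.Prince.2000} from scratch, and the singular-weighting/pseudoinverse issues you correctly flag in the rank-deficient case make that route substantially longer than the paper's.
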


\begin{thm}[Stability of ULISE] \label{thm:stableULISE}
Suppose that ${\rm rk}(C_{2,k} G_{2,k-1})=p-p_{H_{k-1}}$. Then, that $(\tilde{A}_k,C_{2,k})$ is uniformly detectable\footnote{\label{footnote1} The notions of uniform detectability and stabilizability are standard (see, e.g., \cite[Section 2]{Anderson.Moore.1981}). A spectral test for these properties can be found in \cite{Peters.Iglesias.1999}.} is sufficient for the boundedness of the error covariance of the ULISE algorithm. Furthermore, if $(\tilde{A}_k,\tilde{Q}_k^{\frac{1}{2}})$ is uniformly stabilizable\footnoteref{footnote1}, ULISE is exponentially stable (i.e., its expected estimate errors decay exponentially).
\end{thm}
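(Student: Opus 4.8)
The plan is to derive the error recursions for ULISE in the deterministic setting (noise-free, expected errors), reduce them to a single closed-loop recursion on the state estimation error driven by a gain-dependent closed-loop matrix, and then invoke standard Riccati/Lyapunov arguments for time-varying systems. First I would write the state and input estimation errors $\tilde{x}_{k|k} := x_k - \hat{x}_{k|k}$, $\tilde{x}_{k|k-1} := x_k - \hat{x}_{k|k-1}$, $\tilde{x}^\star_{k|k} := x_k - \hat{x}^\star_{k|k}$, $\tilde{d}_{1,k} := d_{1,k} - \hat{d}_{1,k}$, $\tilde{d}_{2,k-1} := d_{2,k-1} - \hat{d}_{2,k-1}$, and substitute the filter equations \eqref{eq:variant1}, \eqref{eq:d2}, \eqref{eq:time}, \eqref{eq:xstar}, \eqref{eq:stateEst} into the system \eqref{eq:sysX}, \eqref{eq:sysY}. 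Using $M_{1,k} = \Sigma_k^{-1}$ (as fixed in Algorithm~\ref{algorithm1}) one gets $\hat{d}_{1,k} = d_{1,k} + \Sigma_k^{-1}(C_{1,k}\tilde{x}_{k|k} + v_{1,k})$, so in expectation the $d_{1,k}$ error is $C_{1,k}\tilde{x}_{k|k}$ filtered through $\Sigma_k^{-1}$; likewise the full-rank condition ${\rm rk}(C_{2,k}G_{2,k-1}) = p - p_{H_{k-1}}$ makes $M_{2,k}$ a valid left inverse of $C_{2,k}G_{2,k-1}$ on the relevant subspace, so $\hat{d}_{2,k-1}$ is unbiased and $\tilde{d}_{2,k-1}$ is a linear function of $\tilde{x}_{k|k-1}$ and noise. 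Chaining the time update and measurement update then yields a recursion of the form $\tilde{x}_{k|k} = \bar{A}_{k-1}\tilde{x}_{k-1|k-1} + (\text{noise terms})$, where $\bar{A}_{k-1}$ is the closed-loop matrix built from $\hat{A}_{k-1}$, the $d_{2}$-injection $(I - G_{2,k-1}M_{2,k}C_{2,k})$, and the measurement-update gain $L_k$.

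**Next**, for the boundedness claim I would recognize the $P^x_{k|k}$ recursion in Algorithm~\ref{algorithm1} as (the information form of) a Kalman-type Riccati recursion for an auxiliary system whose dynamics matrix is $\tilde{A}_k = (I - G_{2,k-1}\tilde{M}_{2,k}C_{2,k})\hat{A}_k + G_{2,k-1}\tilde{M}_{2,k}C_{2,k}$ and whose process-noise covariance is $\tilde{Q}_k$, with output matrix $C_{2,k}$ and measurement-noise covariance $R_{2,k} \succ 0$ — this is exactly why those quantities are defined just before the theorem. Since $R_{2,k} \succ 0$ uniformly (from \eqref{eq:R12}, noting the transformation and bounded data) and the system matrices are bounded, uniform detectability of $(\tilde{A}_k, C_{2,k})$ is the standard hypothesis guaranteeing that the Riccati iteration remains uniformly bounded above (e.g., via a monotone comparison to the covariance obtained under a stabilizing — not necessarily optimal — gain, cf.\ \cite{Anderson.Moore.1981}); I would state this comparison and cite the classical result rather than reprove it. That gives $P^x_{k|k} \preceq \beta I$ for all $k$, i.e., boundedness of the error covariance, and also uniform boundedness of the gains $L_k$, $M_{2,k}$ through their formulas.

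**For the exponential stability claim**, the goal is to show the closed-loop transition matrix $\Phi_{\bar{A}}(k,j) := \bar{A}_{k-1}\cdots\bar{A}_j$ is uniformly exponentially stable, which then makes $\mathbb{E}[\tilde{x}_{k|k}] \to 0$ exponentially (and, by the error maps above, the input-error expectations too). I would build a Lyapunov function from the inverse information matrix: with $P^x_{k|k}$ now known to be bounded above and, under uniform stabilizability of $(\tilde{A}_k,\tilde{Q}_k^{1/2})$, also bounded below away from zero, the function $V_k(\xi) = \xi^\top (P^x_{k|k})^{-1}\xi$ is uniformly bounded above and below, and the Riccati/closed-loop identity gives a strict decrease $V_k(\bar{A}_{k-1}\xi) \le (1-\delta)V_{k-1}(\xi)$ for some $\delta \in (0,1)$ — this is the time-varying analogue of the classical Kalman-filter stability proof. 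The algebraic identity linking $\bar{A}_{k-1}$, $L_k$, $\tilde{A}_{k-1}$, and the two Riccati matrices is the load-bearing computation; matching the ULISE closed loop to the auxiliary $(\tilde{A}_k, C_{2,k}, \tilde{Q}_k, R_{2,k})$ Kalman filter requires carefully tracking how the extra $d_1$-channel (through $\hat{A}_k$ versus $A_k$) and the $d_2$-projection fold into $\tilde{A}_k$.

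**The main obstacle** I anticipate is precisely establishing that the ULISE error dynamics are \emph{equivalent} to a standard Kalman filter for the auxiliary pair $(\tilde{A}_k, C_{2,k})$ with noise $(\tilde{Q}_k, R_{2,k})$ — that is, verifying the algebraic identity that reduces the three-step closed loop to this canonical form so that the off-the-shelf time-varying Kalman stability theory (boundedness from detectability, exponential stability from added stabilizability) applies verbatim. The construction of the $L_k$ gain in Algorithm~\ref{algorithm1} via the pseudoinverse $\tilde{R}_k^{\star\dagger}$ and the projector $(I - U_{1,k}\Sigma_k M_{1,k}^\star)$ makes this bookkeeping delicate; once it is done, invoking \cite{Anderson.Moore.1981} and a uniform Lyapunov argument finishes both parts with little further effort.
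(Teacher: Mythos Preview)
Your strategy---reduce the ULISE error dynamics to an equivalent Kalman filter and invoke Anderson--Moore---is exactly the paper's, but you misidentify the load-bearing step. You claim the $P^x_{k|k}$ recursion of Algorithm~\ref{algorithm1} \emph{is} the Kalman Riccati for the auxiliary pair $(\tilde A_k,C_{2,k})$ with noise $(\tilde Q_k,R_{2,k})$. It is not: the ULISE input gain $M_{2,k}$ in \eqref{eq:Mk2} is the generalized least-squares gain, which depends on $\tilde R_{2,k}$ and hence on $P^x_{k-1|k-1}$ itself, so the equivalent-system matrices obtained from the error recursion depend on the Riccati solution, and uniform detectability/stabilizability cannot be checked on such a system a priori. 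The quantities $\tilde A_k,\tilde Q_k$ defined before the theorem use $\tilde M_{2,k}=(C_{2,k}G_{2,k-1})^\dagger$---the \emph{ordinary} least-squares gain---precisely to break this circularity; they are the matrices of a \emph{different}, suboptimal filter, not of ULISE.

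The paper closes this gap in two moves you omit. For boundedness, it applies Anderson--Moore to the suboptimal filter (whose matrices $\tilde A_k,\tilde Q_k$ are fixed and known a priori) and then invokes the optimality of ULISE to conclude $P^x_{k|k}\preceq P^{x,\mathrm{sub}}_{k|k}$ and that $\tilde L_k$ is bounded. For exponential stability, the Lyapunov/Riccati identity you sketch would relate $(P^x)^{-1}$ to $\tilde A_k$, not to the actual ULISE closed-loop matrix $(I-\tilde L_kC_{2,k})(I-G_{2,k-1}M_{2,k}C_{2,k})\hat A_{k-1}$ built with $M_{2,k}$; the missing link is that ordinary and generalized least squares produce the \emph{same expected estimate}, so the expected-error recursion of ULISE coincides with that of the suboptimal filter, and exponential stability of the latter (from Anderson--Moore, using the now-bounded $\tilde L_k$) transfers directly. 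Your anticipated ``delicate bookkeeping'' therefore hides a conceptual step---the OLS/GLS expectation equivalence---without which the reduction to $(\tilde A_k,C_{2,k})$ does not go through.
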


\begin{thm}[Convergence of ULISE to Steady-state] \label{thm:convULISE}
Let ${\rm rk}(C_{2,k} G_{2,k-1})=p-p_{H_{k-1}}$. Then, in the time-invariant case with $P^x_{0|0} \succ 0$, the filter gains of ULISE (exponentially) converge to a unique stationary solution if and only if
\vspace{-0.1cm}
\begin{enumerate}[(i)]
\item The linear time-invariant discrete-time system is strongly detectable, i.e Theorem \ref{thm:det} holds, and
\item \hspace{-0.2cm} ${\rm rk} \begin{bmatrix} \hat{A}- e^{j \omega}I & G_2  & \hat{Q}^{\frac{1}{2}} & 0 \\ e^{j \omega}C_2 & 0 & 0 & R_2^{\frac{1}{2}}\end{bmatrix}=n+l-p_H$, \\ $\forall \omega \in[0, 2 \pi]$
\end{enumerate}
where $\hat{Q}:=G_{1} M_{1} R_{1} M_{1}^\top G_{1}^\top$ $+Q$.
\end{thm}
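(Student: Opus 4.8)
The plan is to specialize the ULISE error-covariance update to the time-invariant setting, recognize it as a discrete-time Riccati recursion, invoke the classical convergence theory for such recursions, and finally translate the detectability and unit-circle conditions that emerge into the system-theoretic conditions (i)--(ii).

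First I would compute the closed-loop error dynamics. With constant matrices and $M_1=\Sigma^{-1}$, the $z_{1}$-channel collapses into $\hat A=A-G_1\Sigma^{-1}C_1$ and $\hat Q=Q+G_1\Sigma^{-1}R_1\Sigma^{-\top}G_1^\top$: the prediction error satisfies $x_k-\hat x_{k|k-1}=\hat A\,e_{k-1|k-1}+G_2 d_{2,k-1}+\hat w_{k-1}$ with $\hat w_{k-1}$ of covariance $\hat Q$, and the measurement-update gain obeys $L_k C=\tilde L_k C_2$ because $L_k U_{1,k}=0$ (the unbiasedness constraint justified in the algorithm). Eliminating $d_{2,k-1}$ through $\hat d_{2,k-1}$ and chaining the time and measurement updates, one obtains a closed recursion for $P_k:=P^x_{k|k}$ of Riccati type, with data built from $(\hat A,\,C_2,\,[\,G_2\;\;\hat Q^{1/2}\,],\,R_2,\,R)$; the $G_2$-channel enters the ``process-noise'' term because $d_2$ is re-estimated optimally at every step (its gain $M_{2,k}$ itself being fixed by a sub-recursion on $\tilde R_{2,k}=C_2\tilde P_kC_2^\top+R_2$). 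I would verify that this recursion is in the canonical form required by the sharp convergence theorem (effective measurement-noise weighting positive definite, cross terms removed by the usual change of variables) and note that $L_k$ and $M_{2,k}$ are continuous in $P_k$ and its one-step predecessor, so that $P_k\to\bar P$ exponentially is equivalent to exponential convergence of all filter gains to a unique stationary solution.

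Next I would invoke the classical characterization of discrete Riccati-recursion convergence (see \cite{Anderson.Moore.1981} and the standard literature): with $P^x_{0|0}\succ0$, the recursion converges exponentially to the unique stabilizing solution of its algebraic Riccati equation if and only if (a) the error dynamics are stabilizable by the gains --- equivalently the pair $(\tilde A,C_2)$ is detectable, the condition identified in Theorem \ref{thm:stableULISE} --- which yields boundedness, and (b) $(\hat A,[\,G_2\;\;\hat Q^{1/2}\,])$ has no uncontrollable mode on the unit circle, i.e.\ ${\rm rk}[\,e^{j\omega}I-\hat A\;\;G_2\;\;\hat Q^{1/2}\,]=n$ for all $\omega\in[0,2\pi]$, which upgrades ``a fixed point'' to ``the unique, $P_0$-independent stabilizing one''. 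For the converse, failure of (a) makes $P_k$ unbounded, while failure of (b) gives either non-convergence or a $P_0$-dependent limit, so in either case the gains cannot settle to a unique stationary solution --- this is the ``only if'' direction.

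It then remains to identify (a) with condition (i) and (b) with condition (ii). For (a)$\Leftrightarrow$(i): with $\tilde A=(I-G_2\tilde M_2C_2)\hat A+G_2\tilde M_2C_2$ and $\tilde M_2=(C_2G_2)^\dagger$ (well defined since ${\rm rk}(C_2G_2)=p-p_H$), a Hautus/PBH argument shows that the undetectable modes of $(\tilde A,C_2)$ coincide with the unstable invariant zeros of the pencil $\begin{bmatrix}zI-\hat A&-G_2\\ C_2&0\end{bmatrix}$, so $(\tilde A,C_2)$ is detectable iff this pencil has rank $n+p-p_H$ for all $|z|\geq1$ --- which by Theorem \ref{thm:det} is exactly strong detectability. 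For (b)$\Leftrightarrow$(ii): since $R_2^{1/2}$ is square and nonsingular, a left-null vector $[\gamma^\top\;\beta^\top]$ of $\begin{bmatrix}\hat A-e^{j\omega}I&G_2&\hat Q^{1/2}&0\\ e^{j\omega}C_2&0&0&R_2^{1/2}\end{bmatrix}$ is forced to have $\beta=0$ and hence $\gamma^\top[\,\hat A-e^{j\omega}I\;\;G_2\;\;\hat Q^{1/2}\,]=0$, so full row rank $n+l-p_H$ of this matrix is equivalent to full row rank $n$ of $[\,e^{j\omega}I-\hat A\;\;G_2\;\;\hat Q^{1/2}\,]$, i.e.\ to (b). The main obstacle I anticipate is the first half of the Riccati reduction: showing that the interlocked updates --- the optimal estimation of $d_{2,k-1}$ (with its own gain), the time update, and the measurement update --- collapse into a single standard-form Riccati recursion, since this is precisely what makes the classical ``iff'' convergence theory applicable; the PBH translation in (a)$\Leftrightarrow$(i) is a secondary technical point, as one must handle the pseudoinverse gain and the full-column-rank assumption on $C_2G_2$ with care.
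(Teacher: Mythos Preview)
Your proposal is correct and follows essentially the same approach as the paper: reduce the ULISE error-covariance update to a Riccati recursion for an equivalent system without unknown inputs (this reduction is carried out in the proof of Theorem~\ref{thm:stableULISE}), invoke the classical necessary-and-sufficient convergence theory for discrete Riccati recursions, and then identify the resulting detectability and unit-circle-controllability conditions with (i) and (ii). The paper's own proof is terse---it points to the reduction already done for Theorem~\ref{thm:stableULISE}, notes that uniform detectability/stabilizability collapse to the standard notions in the time-invariant case, and defers the precise form of the ``iff'' conditions to \cite{Cheng.2009,Darouach.1997} (observing that those conditions do not depend on whether one uses $M_{2}$ or $\tilde M_{2}$)---whereas you spell out the PBH translations (a)$\Leftrightarrow$(i) and (b)$\Leftrightarrow$(ii) directly, which is a more self-contained route to the same conclusion.
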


On the other hand, although the structure of PLISE is suboptimal (as evidenced by the simulation examples in Section \ref{sec:examples}), PLISE does also possess stability guarantees in the time-invariant case, as stated in the following theorem and will be proven in Section \ref{sec:stablePLISE}.

\begin{thm}[Stability of PLISE (time-invariant)] \label{thm:convPLISE}
Let ${\rm rk}(C_{2,k} G_{2,k-1})=p-p_{H_{k-1}}$. Then, in the time-invariant case with $P^x_{0|0} \succ 0$, the estimate errors and error covariances of PLISE remain bounded if
\vspace{-0.2cm}
\begin{enumerate}[(i)]
\item The linear time-invariant discrete-time system is strongly detectable, i.e Theorem \ref{thm:det} holds, and \label{cond1}
\item ${\rm rk} \begin{bmatrix} e^{j \omega}I - F^s & Q^{s \frac{1}{2}} \end{bmatrix}=n$, $\forall \omega \in[0, 2 \pi]$ \label{cond2}
\end{enumerate}
where $F^s:=\hat{N} \hat{A} - \hat{S} \Theta^{-1} C_2$, $Q^s:=G_2 \tilde{M}_2 R_2 \tilde{M}^\top_2 G_2^\top +\hat{N} \hat{Q} \hat{N}^\top - \hat{S} \Theta^{-1} \hat{S}^\top$, $\hat{N}:=I-G_2 \tilde{M}_2 C_2$, $\tilde{M}_2:=(C_2 G_2)^\dagger$, $\hat{S}:=-\hat{N} \hat{A} G_2 \tilde{M}_2 R_2$, and assuming that $\Theta:=R_2 -C_2 G_2 \tilde{M}_2 R_2 -R_2 \tilde{M}_2^\top G_2^\top C_2^\top$ is invertible.
\end{thm}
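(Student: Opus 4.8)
The plan is to reduce the claim to the boundedness of a Riccati-type recursion for the propagated-state error covariance $P^{\star x}_{k|k}$ of PLISE, and then to invoke the classical boundedness theory of the Kalman--Riccati recursion, with the two hypotheses supplying, respectively, uniform detectability and the unit-circle stabilizability needed for a reduced-order time-invariant pair built from $(F^s, C_2, Q^s, \Theta)$.

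First I would derive the closed-loop error recursion. Writing $e^\star_{k|k} := x_k - \hat{x}^\star_{k|k}$ and $\tilde{e}_{k|k} := x_k - \hat{x}_{k|k}$ and propagating the four PLISE steps (with $M_{1,k} = \Sigma_k^{-1}$, with PLISE's minimum-variance $M_{2,k}$, and with the optimal measurement-update gain $L_k$), two structural facts do most of the work: since $L_k = K_k(I - U_{1,k}\Sigma_k M_{1,k}^\star)U_{2,k}^\top$ and $U_{2,k}^\top H_{1,k} = U_{2,k}^\top U_{1,k}\Sigma_k = 0$, the direct-feedthrough term drops out of every innovation, giving $\tilde{e}_{k|k} = (I - L_k C_k)e^\star_{k|k} - L_k v_k$; and since $M_{2,k}C_{2,k}G_{2,k-1} = I$ by the rank hypothesis, the unknown input $d_{2,k-1}$ cancels, giving $e^\star_{k|k} = (I - G_{2,k-1}M_{2,k}C_{2,k})\rho_k - G_{2,k-1}M_{2,k}v_{2,k}$ with $\rho_k := (\hat{A}_{k-1} - A_{k-1}L_{k-1}C_{k-1})e^\star_{k-1|k-1} + w_{k-1} - A_{k-1}L_{k-1}v_{k-1} - G_{1,k-1}\Sigma_{k-1}^{-1}v_{1,k-1}$. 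Composing these yields $e^\star_{k+1|k+1} = \Xi_k e^\star_{k|k} + \eta_k$; the optimal $L_k$ is precisely the gain that decorrelates $e^\star_{k|k}$ from the $v_{2,k}$-dependent part of the innovation, and this is what introduces the cross-covariance $\hat{S}$, turns the bare dynamics matrix into $\hat{N}\hat{A} - \hat{S}\Theta^{-1}C_2$, and makes the innovation covariance $C_2 P^{\star x}_{k|k}C_2^\top + \Theta$ with $\Theta = R_2 - C_2 G_2\tilde{M}_2 R_2 - R_2\tilde{M}_2^\top G_2^\top C_2^\top$ (invertibility of $\Theta$ and $P^x_{0|0}\succ 0$ keeping the recursion well posed). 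In the time-invariant steady state this produces exactly $F^s, Q^s, \hat{S}, \Theta$ as defined, and the algebraic identities of Algorithm~\ref{algorithm2} express $P^x_{k|k}, P^d_{1,k}, P^d_{2,k}, P^d_{12,k}$ as bounded continuous functions of $P^{\star x}_{k|k}$ and the (bounded) data.

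Next I would recognize that the resulting recursion for $P^{\star x}_{k|k}$ is a Kalman--Riccati recursion associated with the auxiliary system $(F^s, C_2, Q^s, \Theta, \hat{S})$, so that the classical boundedness/monotonicity theory (e.g.\ \cite{Anderson.Moore.1981,Peters.Iglesias.1999}) applies: if $(F^s, C_2)$ is uniformly detectable and $(F^s, Q^{s \frac{1}{2}})$ has no uncontrollable mode on the unit circle, then $P^{\star x}_{k|k}$ stays bounded from any $P^{\star x}_{0|0}\succeq 0$ and the closed-loop transition matrix is uniformly exponentially stable. Hypothesis (ii) is verbatim the PBH test for the latter property. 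For detectability of $(F^s, C_2)$, I would show that its unstable unobservable modes are in bijection with the invariant zeros of $\begin{bmatrix} zI - \hat{A} & -G_2 \\ C_2 & 0 \end{bmatrix}$ with $|z| \geq 1$ --- the $\hat{N}$ projection and the $\hat{S}\Theta^{-1}C_2$ decorrelation only rearrange the finite zero structure of this pencil --- and hypothesis (i) together with Theorem~\ref{thm:det} states that there are none; the deadbeat left-inverse $\tilde{M}_2 = (C_2 G_2)^\dagger$ is used here as the representative unbiased gain, as in Theorem~\ref{thm:stableULISE}.

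Combining the above: under (i)--(ii), $P^{\star x}_{k|k}$ and hence all PLISE error covariances stay bounded, and the mean errors $\mathbb{E}[e^\star_{k|k}], \mathbb{E}[\tilde{e}_{k|k}], \mathbb{E}[\hat{d}_k - d_k]$ remain bounded. The main obstacle, in my estimation, is twofold: carrying the algebra of the error recursion to a form clean enough that $F^s, Q^s, \hat{S}, \Theta$ emerge exactly as stated --- in particular verifying that the optimal $L_k$ delivers precisely the $\hat{S}\Theta^{-1}$ correction and that $\tilde{P}_k$ in Algorithm~\ref{algorithm2} really is the covariance of $\rho_k$ under the chosen $\Gamma_k$ --- and rigorously justifying that PLISE's minimum-variance $M_{2,k}$ may be replaced by the constant deadbeat gain $\tilde{M}_2$ for the boundedness analysis, which (unlike in the globally optimal ULISE case) does not follow from a simple covariance comparison and instead requires a monotone-comparison argument on the two Riccati-type recursions. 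The system-theoretic step (i) $\Rightarrow$ detectability of $(F^s, C_2)$ is comparatively routine given Corollary~\ref{cor:1} and Theorem~\ref{thm:det}.
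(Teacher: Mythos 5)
Your proposal follows essentially the same route as the paper: reduce PLISE's covariance propagation to an algebraic Riccati equation in $(F^s,C_2,Q^s,\Theta,\hat S)$, invoke the classical detectability plus unit-circle-reachability conditions for a bounded stabilizing solution, translate detectability of $(F^s,C_2)$ into strong detectability via the pencil identities, and bound the optimal filter's covariance by that of a suboptimal comparison filter. The only substantive difference is one of ordering: the paper substitutes the constant gain $\tilde M_2=(C_2G_2)^\dagger$ at the outset (which is what makes $F^s$ and $Q^s$ constant --- the minimum-variance $M_{2,k}$ would leave them dependent on $P^x_{k-1|k-1}$, contrary to the framing of your first paragraph) and then disposes of the optimal-versus-suboptimal comparison with the one-line optimality inequality that you rightly flag as the delicate step.
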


Furthermore, these ULISE and PLISE algorithms reduce to filters in existing literature, as shown in Section \ref{sec:connection}.

\begin{rem}
The stability (and convergence to steady-state in the time-invariant case) of both variants of the unified state and input estimator 
 is closely related to the \textsf{strong detectability} of the system. In the time-varying case, the sufficient condition of uniform detectability of ULISE implies strong detectability (cf. Definition \ref{def:det} and \cite[Lemma 2.2]{Anderson.Moore.1981}) whereas in the time-invariant case, the strong detectability condition appears explicitly for the stability of both ULISE and PLISE. On the other hand, uniform stabilizability of Theorem \ref{thm:stableULISE} parallels the sufficient condition for the Kalman filter and Condition (ii) of Theorems \ref{thm:convULISE} and \ref{thm:convPLISE} corresponds to the \textsf{controllability} of the filter dynamics \textsf{on the unit circle}, akin to the 
 system \textsf{controllability on the unit circle} for the Kalman filter. 
Conversely, if the system is not strongly detectable, then it is not possible to obtain unbiased estimates of the states and unknown inputs even for the case with no noise.
\end{rem}

\vspace{-0.1cm}
\section{Filter Description and Analysis} \label{sec:analysis}
\vspace{-0.1cm}
For the analysis of the proposed filter,
let $\tilde{x}_{k|k}:=x_k-\hat{x}_{k|k}$, $\tilde{x}^\star_{k|k}:=x_k-\hat{x}^\star_{k|k}$, $\tilde{d}_{1,k}:=d_{1,k}-\hat{d}_{1,k}$, $\tilde{d}_{2,k}:=d_{2,k}-\hat{d}_{2,k}$, $\tilde{d}_{k}:=d_{k}-\hat{d}_{k}$, $P^x_{k|k}:=\mathbb{E}[\tilde{x}_{k|k} \tilde{x}_{k|k}^\top]$, $P^{\star x}_{k|k}:=\mathbb{E}[\tilde{x}^\star_{k|k} \tilde{x}_{k|k}^{\star \top}]$, $P^d_{1,k} :=\mathbb{E}[\tilde{d}_{1,k} \tilde{d}_{1,k}^\top]$, $P^d_{2,k} :=\mathbb{E}[\tilde{d}_{2,k} \tilde{d}_{2,k}^\top]$, $P^d_{12,k} =(P^d_{21,k})^\top :=\mathbb{E}[\tilde{d}_{1,k} \tilde{d}_{2,k}^\top]$, $P^{xd}_{1,k}=(P^{xd}_{1,k})^\top:=\mathbb{E}[\tilde{x}_{k|k}\tilde{d}_{1,k}^\top]$, $P^{xd}_{2,k}=(P^{xd}_{2,k})^\top:=\mathbb{E}[\tilde{x}_{k|k}\tilde{d}_{2,k}^\top]$ and $P^d_{k} :=\mathbb{E}[\tilde{d}_{k} \tilde{d}_{k}^\top]$. We initially assume that the initial state estimate is unbiased, i.e., $\mathbb{E}[\hat{x}_{0|0}]=\mathbb{E}[\hat{x}^\star_{0|0}]=\mathbb{E}[x_0]$ and present a lemma that summarizes the unbiasedness of the state and unknown input estimates for all time steps that is one piece of the claim in Lemma \ref{lem:main}.

\begin{lem} \label{lem:unbiased}
Let $\hat{x}_{0|0}=\hat{x}^\star_{0|0}$ be unbiased, then the input and state estimates, $\hat{d}_{k-1}$, $\hat{x}^\star_{k|k}$ and $\hat{x}_{k|k}$, are unbiased for all $k$, if and only if $M_{1,k} \Sigma_k=I$, $M_{2,k}C_{2,k} G_{2,k-1}=I$ and $L_k U_{1,k}=0$. Consequently, ${\rm rk}(C_{2,k} G_{2,k-1})=p-p_{H_{k-1}}$ and $L_k=L_k U_{2,k} U_{2,k}^\top$.
\end{lem}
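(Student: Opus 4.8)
\textbf{Proof proposal for Lemma \ref{lem:unbiased}.}

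The plan is to propagate the unbiasedness requirement through the three filter steps recursively, deriving at each step the condition on the gains that makes the corresponding error have zero mean. I would work with the error recursions rather than the estimates themselves. First, assuming $\mathbb{E}[\tilde{x}_{k|k}]=0$ at time $k$ (the base case being the hypothesis $\mathbb{E}[\hat{x}_{0|0}]=\mathbb{E}[x_0]$), substitute the system equations \eqref{eq:sysX}, \eqref{eq:sysY} into the filter equations \eqref{eq:variant1}--\eqref{eq:stateEst} and take expectations, using $\mathbb{E}[w_k]=\mathbb{E}[v_k]=0$ and the uncorrelatedness assumptions. For the $d_{1,k}$ estimate \eqref{eq:variant1}, $\tilde{d}_{1,k}=(I-M_{1,k}\Sigma_k)d_{1,k}+M_{1,k}C_{1,k}\tilde{x}_{k|k}-M_{1,k}v_{1,k}$, so $\mathbb{E}[\tilde{d}_{1,k}]=0$ for arbitrary $d_{1,k}$ forces $M_{1,k}\Sigma_k=I$; since $\Sigma_k$ is square and invertible this is consistent, giving $M_{1,k}=\Sigma_k^{-1}$ (and likewise for variant II with $\hat{x}^\star$).

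Next, for the $d_{2,k-1}$ estimate \eqref{eq:d2}, one writes $\hat{x}_{k|k-1}$ via the time update \eqref{eq:time}, expresses $\tilde{x}_{k|k-1}$ in terms of $\tilde{x}_{k-1|k-1}$, $\tilde{d}_{1,k-1}$, $w_{k-1}$, and the undetermined $d_{2,k-1}$ term $G_{2,k-1}d_{2,k-1}$ (which the time update does not account for); then $z_{2,k}-C_{2,k}\hat{x}_{k|k-1}-D_{2,k}u_k = C_{2,k}G_{2,k-1}d_{2,k-1}+C_{2,k}\tilde{x}_{k|k-1}+v_{2,k}$, so $\tilde{d}_{2,k-1}=(I-M_{2,k}C_{2,k}G_{2,k-1})d_{2,k-1}-M_{2,k}C_{2,k}\tilde{x}_{k|k-1}-M_{2,k}v_{2,k}$, and arbitrariness of $d_{2,k-1}$ yields $M_{2,k}C_{2,k}G_{2,k-1}=I$. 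This identity can hold only if $C_{2,k}G_{2,k-1}$ has full column rank $p-p_{H_{k-1}}$ (its number of columns), which is the asserted rank condition; it also makes the formula for $P^d_{2,k-1}$ in the algorithms well-defined. For the measurement update \eqref{eq:stateEst}, write $y_k-C_k\hat{x}^\star_{k|k}-D_k u_k = C_k\tilde{x}^\star_{k|k}+H_{1,k}d_{1,k}+v_k$ where $H_{1,k}=U_{1,k}\Sigma_k$; since $\tilde{x}^\star_{k|k}$ is already unbiased once $M_{2,k}$ is chosen as above, and the noise terms vanish in expectation, the only obstruction is the $L_k H_{1,k}d_{1,k}=L_k U_{1,k}\Sigma_k d_{1,k}$ term, and arbitrariness of $d_{1,k}$ together with invertibility of $\Sigma_k$ forces $L_k U_{1,k}=0$. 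The final consequence $L_k=L_kU_{2,k}U_{2,k}^\top$ then follows because $[U_{1,k}\ U_{2,k}]$ is unitary, so $I=U_{1,k}U_{1,k}^\top+U_{2,k}U_{2,k}^\top$ and $L_k=L_k(U_{1,k}U_{1,k}^\top+U_{2,k}U_{2,k}^\top)=L_kU_{2,k}U_{2,k}^\top$, justifying the substitution made in \eqref{eq:stateEst}.

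The converse direction (sufficiency) is the routine backward reading of the same computations: with the three gain identities in force, each of the error expressions above loses its $d$-dependent term, and an induction on $k$ shows $\mathbb{E}[\tilde{x}_{k|k}]=\mathbb{E}[\tilde{x}^\star_{k|k}]=0$ and $\mathbb{E}[\tilde{d}_{k-1}]=0$ for all $k$, using $\hat{d}_{k-1}=V_{1,k-1}\hat{d}_{1,k-1}+V_{2,k-1}\hat{d}_{2,k-1}$ and $d_{k-1}=V_{1,k-1}d_{1,k-1}+V_{2,k-1}d_{2,k-1}$. I expect the main obstacle to be purely bookkeeping: correctly tracking which error terms are already mean-zero by the inductive hypothesis versus which carry the unestimated $d_{2,k-1}$ contribution through the time update, and being careful that the ``arbitrary input sequence'' argument is legitimate (i.e. that no distributional assumption on $d_k$ is being smuggled in) so that the vanishing of each coefficient matrix is genuinely necessary and not merely sufficient.
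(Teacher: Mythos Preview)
Your proposal is correct and follows essentially the same inductive error-recursion argument as the paper: compute the estimation errors $\tilde{d}_{1,k}$, $\tilde{d}_{2,k-1}$, $\tilde{x}^\star_{k|k}$, $\tilde{x}_{k|k}$, take expectations, and use arbitrariness of $d_{1,k}$ and $d_{2,k-1}$ to force the three gain identities, then read off the rank condition and the factorization $L_k=L_kU_{2,k}U_{2,k}^\top$ from unitarity of $U_k$. The only wrinkles are cosmetic (a sign slip in your $\tilde{d}_{1,k}$ formula, and a slight double-counting of the $G_{2,k-1}d_{2,k-1}$ term when you write the $z_{2,k}$ innovation), exactly the ``bookkeeping'' you anticipated; the paper handles this by keeping the $d_{2,k-1}$ contribution separate rather than absorbing it into $\tilde{x}_{k|k-1}$.
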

\begin{proof}
We observe from \eqref{eq:sysY}, \eqref{eq:variant1}, \eqref{eq:variant2} and \eqref{eq:d2} that
\begin{subequations} \label{eq:d1_hat}
\begin{empheq}[left=\empheqlbrace]{align}
\hat{d}_{1,k}^{\rm \, I} &=M_{1,k} (C_{1,k} \tilde{x}_{k|k}+\Sigma_k d_{1,k}+v_{1,k}) \\ 
\hat{d}_{1,k}^{\rm \, II} &=M_{1,k} (C_{1,k} \tilde{x}^\star_{k|k}+\Sigma_k d_{1,k}+v_{1,k}) \; 
\end{empheq}
\end{subequations} \vspace{-0.5cm}
\begin{align} \label{eq:d2_hat}
\nonumber \hat{d}_{2,k-1}=& M_{2,k} (C_{2,k} (A_{k-1}\tilde{x}_{k-1|k-1}+G_{1,k-1} \tilde{d}_{1,k-1})\\
& +w_{k-1}+v_{2,k}+C_{2,k} G_{2,k-1} d_{2,k-1}). 
\end{align} 
On the other hand,  from \eqref{eq:time} and \eqref{eq:xstar}, the error in the propagated state estimate can be obtained as:
\begin{align}
\nonumber \tilde{x}^\star_{k|k} =& A_{k-1} \tilde{x}_{k-1|k-1}+G_{1,k-1} \tilde{d}_{1,k-1}+G_{2,k-1} \tilde{d}_{2,k-1}\\ &+w_{k-1}. \label{eq:x_kk-1}
\end{align}
Moreover, from \eqref{eq:y} and \eqref{eq:stateEst}, the updated state estimate error is
\begin{align} \label{eq:x_kk}
\tilde{x}_{k|k}=(I-L_kC_k) \tilde{x}^\star_{k|k}-L_k U_{1,k} \Sigma_k d_{1,k} - L_k v_k.
\end{align}
We show by induction that the estimates $\hat{d}_{k}$, $\hat{x}_{k|k}$ and $\hat{x}^\star_{k|k}$ are unbiased. 
For the base case, since $\hat{x}_{0|0}$ and $\hat{x}^\star_{0|0}$ are unbiased and the process and measurement noise are assumed to have zero mean, $\mathbb{E}[w_0]=0$, $\mathbb{E}[v_0]=0$, from \eqref{eq:d1_hat} and \eqref{eq:d2_hat}, $\mathbb{E}[\hat{d}_{1,0}]=d_{1,0}$ and $\mathbb{E}[\hat{d}_{2,0}]=d_{2,0}$, i.e., $\hat{d}_{1,0}$ and $\hat{d}_{2,0}$ are unbiased, if and only if $M_{1,0} \Sigma_0=I$, and $M_{2,1}C_{2,1} G_{2,0}=I$. Hence, $\hat{d}_0$ is unbiased.
In the inductive step, we assume that $\mathbb{E}[\tilde{x}_{k-1|k-1}]=\mathbb{E}[\tilde{x}^\star_{k-1|k-1}]=0$. Then, the input estimates are unbiased, i.e., $\mathbb{E}[\tilde{d}_{k-1}]=\mathbb{E}[\tilde{d}_{1,k-1}]=\mathbb{E}[\tilde{d}_{2,k-1}]=0$, if and only if $M_{1,k-1} \Sigma_{k-1}=I$, and $M_{2,k}C_{2,k} G_{2,k-1}=I$. Since the process noise has zero mean, by \eqref{eq:x_kk-1}, $\mathbb{E}[\tilde{x}^\star_{k|k}]=0$. Similarly, from \eqref{eq:x_kk} with a zero-mean measurement noise, we impose the constraint $L_k U_{1,k}=0$ such that we obtain $\mathbb{E}[\tilde{x}_{k|k}]=0$. Therefore, by induction, $\mathbb{E}[\tilde{x}^\star_{k|k}]=0$  and $\mathbb{E}[\tilde{x}_{k|k}]=0$ for all $k$. Since we require $M_{2,k}C_{2,k} G_{2,k-1}=I$ for all $k$ for the existence of an unbiased input estimate, it follows that ${\rm rk}(C_{2,k}G_{2,k-1})=p-p_{H_k-1}$ is a necessary and sufficient condition. Furthermore, $L_k=L_k U_k U_k^\top = L_k U_{2,k} U_{2,k}^\top$ since $L_k U_{1,k}=0$.
\end{proof}

\begin{rem}
The assumption of an unbiased initial state is common in existing filters, including the Kalman filter, although this is not critical because the resulting state error dynamics is a stable linear system and the effect of an initial state error decays exponentially.
\end{rem}

Next, we continue the proof of Lemma \ref{lem:main} in three subsections, one for each step of the three-step recursive filter. Then, the subsequent two subsections present the proof of Theorems \ref{thm:globalULISE}, \ref{thm:convULISE}, and \ref{thm:convPLISE}.

\subsection{Unknown Input Estimation} \label{sec:inputEst}

To obtain an optimal estimate of $\hat{d}_{k-1}$ using \eqref{eq:d}, we estimate both components of the unknown input as the best linear unbiased estimates (BLUE). This means that the expected input estimate is unbiased, i.e., $\mathbb{E}[\hat{d}_{1,k}]=d_{1,k}$, $\mathbb{E}[\hat{d}_{2,k}]=d_{2,k}$ and $\mathbb{E}[\hat{d}_{k}]=d_{k}$, as was shown in Lemma \ref{lem:unbiased}, and that the mean squared error of the estimate is the lowest possible, shown next in Theorem \ref{thm:mse}.

\begin{thm} \label{thm:mse}
Suppose $\hat{x}_{0|0}=\hat{x}^\star_{0|0}$ are unbiased.  
Then \eqref{eq:variant1}, \eqref{eq:variant2} and \eqref{eq:d2} provide the best linear input estimate (BLUE) with $M_{1,k}$ and $M_{2,k}$ given by 
\begin{align}
\hspace{-0.25cm}  M_{1,k}&=\Sigma_k^{-1} \label{eq:Mk}\\
\hspace{-0.25cm} M_{2,k}&=(G_{2,k-1}^\top C_{2,k}^\top \tilde{R}_{2,k}^{-1} C_{2,k} G_{2,k-1})^{-1}G_{2,k-1}^\top C_{2,k}^\top \tilde{R}_{2,k}^{ -1} \hspace{-0.15cm} \label{eq:Mk2}
\end{align}
while the covariance matrices of the optimal input error estimates are 
\begin{align}
 P^d_{1,k}&=\Sigma_k^{-1} \tilde{R}_{1,k} \Sigma^{-1}_k  \\
P^d_{2,k-1}&=(G_{2,k-1}^\top C_{2,k}^\top \tilde{R}_{2,k}^{-1} C_{2,k} G_{2,k-1})^{-1}
\end{align} \vspace{-0.5cm}
\begin{subequations}
\begin{empheq}[left=\empheqlbrace]{align}
\nonumber \tilde{R}_{1,k}^{\rm \, I} :=& \mathbb{E}[e^{\rm \, I}_{1,k} e^{\rm \, I \, \top}_{1,k}]
=C_{1,k} P^x_{k|k}C_{1,k}^\top + R_{1,k} \\&  \quad - C_{1,k} L_k R_k T_{1,k}^\top
 -T_{1,k}R_kL_k^\top C_{1,k}^\top \label{eq:Rvariant1} \\
\tilde{R}_{1,k}^{\rm \, II} :=& \mathbb{E}[e^{\rm \, II}_{1,k} e^{\rm \, II \, \top}_{1,k}]=C_{1,k} P^{\star \, x}_{k|k}C_{1,k}^\top + R_{1,k}  \label{eq:Rvariant2}
\end{empheq}
\end{subequations} \vspace{-0.5cm}
\begin{align}
\tilde{R}_{2,k}:=& \mathbb{E}[e_{2,k} e_{2,k}^\top]=C_{2,k} \tilde{P}_{k} C_{2,k}^\top + R_{2,k}  \label{eq:Rd2}
\end{align}
where $\tilde{P}_k:=\hat{A}_{k-1} P^x_{k-1|k-1} \hat{A}_{k-1}^\top +\hat{Q}_{k-1}$, $\hat{A}_k:=A_k-G_{1,k}M_{1,k}C_{1,k}$ and $\hat{Q}_{k}:=Q_k+G_{1,k} M_{1,k} R_{1,k} M_{1,k}^\top G_{1,k}^\top$.
\end{thm}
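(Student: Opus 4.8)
The plan is to cast each of the three bracketed ``innovation'' quantities in \eqref{eq:variant1}, \eqref{eq:variant2} and \eqref{eq:d2} as the observation of a static linear Gauss--Markov model for the relevant unknown-input component and then apply the best-linear-unbiased-estimator (weighted least squares) formula to it. Using the output equations \eqref{eq:sysY} and the error identities established in the proof of Lemma \ref{lem:unbiased} (in particular $\tilde d_{1,k-1}=-M_{1,k-1}(C_{1,k-1}\tilde x_{k-1|k-1}+v_{1,k-1})$ once $M_{1,k-1}\Sigma_{k-1}=I$), I would write $e^{\rm I}_{1,k}:=z_{1,k}-C_{1,k}\hat x_{k|k}-D_{1,k}u_k=\Sigma_k d_{1,k}+(C_{1,k}\tilde x_{k|k}+v_{1,k})$, $e^{\rm II}_{1,k}:=z_{1,k}-C_{1,k}\hat x^\star_{k|k}-D_{1,k}u_k=\Sigma_k d_{1,k}+(C_{1,k}\tilde x^\star_{k|k}+v_{1,k})$, and, after substituting \eqref{eq:time} into the state dynamics, $e_{2,k}:=z_{2,k}-C_{2,k}\hat x_{k|k-1}-D_{2,k}u_k=(C_{2,k}G_{2,k-1})\,d_{2,k-1}+C_{2,k}\bigl(\hat A_{k-1}\tilde x_{k-1|k-1}-G_{1,k-1}M_{1,k-1}v_{1,k-1}+w_{k-1}\bigr)+v_{2,k}$, where $A_{k-1}\tilde x_{k-1|k-1}+G_{1,k-1}\tilde d_{1,k-1}$ has been folded into the $\hat A_{k-1}$-term. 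Each is of the form ``$\zeta=Hd+\eta$'' with $\mathbb{E}[\eta]=0$.

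The second step is to evaluate $\mathbb{E}[\eta\eta^\top]$ in each case and show it equals $\tilde R^{\rm I}_{1,k}$, $\tilde R^{\rm II}_{1,k}$, $\tilde R_{2,k}$ of \eqref{eq:Rvariant1}--\eqref{eq:Rd2}. This is bookkeeping with the mutual-uncorrelatedness hypotheses of Section \ref{sec:Problem}, but it rests on two structural facts. First, the decoupling transform $T_k$ was chosen so that $v_{1,k}$ and $v_{2,k}$ are uncorrelated (eq. \eqref{eq:R12}); together with $L_k=L_kU_{2,k}U_{2,k}^\top$ from Lemma \ref{lem:unbiased} this gives $L_kR_kT_{1,k}^\top=0$, so $\mathbb{E}[\tilde x_{k|k}v_{1,k}^\top]=-L_kR_kT_{1,k}^\top$ supplies exactly the cross terms in \eqref{eq:Rvariant1} (which vanish once $L_kU_{1,k}=0$ is imposed), whereas $\tilde x^\star_{k|k}$ is a function of $v_{2,k}$ and of noise up to time $k-1$ only, all uncorrelated with $v_{1,k}$, giving the cross-term-free \eqref{eq:Rvariant2}. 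Second, $\tilde x_{k-1|k-1}$ is built only from $x_0$, $w_0,\dots,w_{k-2}$ and $v_0,\dots,v_{k-1}$, hence uncorrelated with $w_{k-1}$ and with $v_{2,k}$, and by the same $L_{k-1}U_{1,k-1}=0$ argument uncorrelated with $v_{1,k-1}$; this collapses $\mathbb{E}[\eta\eta^\top]$ for $e_{2,k}$ to the Kalman-like form $C_{2,k}\tilde P_kC_{2,k}^\top+R_{2,k}$ with $\tilde P_k=\hat A_{k-1}P^x_{k-1|k-1}\hat A_{k-1}^\top+\hat Q_{k-1}$, i.e.\ \eqref{eq:Rd2}.

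Third, I would invoke the Gauss--Markov theorem for the model $\zeta=Hd+\eta$, $\mathbb{E}[\eta]=0$, $\mathbb{E}[\eta\eta^\top]=\mathcal R\succ0$: treating $d$ as an unknown deterministic vector (as in the problem statement), an estimator $\hat d=W\zeta$ is unbiased for all $d$ iff $WH=I$ --- precisely the constraint on $M_{1,k}$, resp.\ $M_{2,k}$, identified in Lemma \ref{lem:unbiased} --- and over this set the error covariance $W\mathcal R W^\top$ is minimized in the $\succeq$ order by $W=(H^\top\mathcal R^{-1}H)^{-1}H^\top\mathcal R^{-1}$, with minimal value $(H^\top\mathcal R^{-1}H)^{-1}$. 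Taking $H=\Sigma_k$, which is square and invertible so that $W=\Sigma_k^{-1}$ irrespective of the weighting, yields \eqref{eq:Mk} and $P^d_{1,k}=\Sigma_k^{-1}\tilde R_{1,k}\Sigma_k^{-1}$; taking $H=C_{2,k}G_{2,k-1}$ and $\mathcal R=\tilde R_{2,k}$ yields \eqref{eq:Mk2} and $P^d_{2,k-1}=(G_{2,k-1}^\top C_{2,k}^\top\tilde R_{2,k}^{-1}C_{2,k}G_{2,k-1})^{-1}$. The required invertibilities are automatic: $\tilde R_{1,k},\tilde R_{2,k}\succ0$ since $R_{1,k},R_{2,k}\succ0$ and the additive terms are positive semidefinite, and $G_{2,k-1}^\top C_{2,k}^\top\tilde R_{2,k}^{-1}C_{2,k}G_{2,k-1}\succ0$ since ${\rm rk}(C_{2,k}G_{2,k-1})=p-p_{H_{k-1}}$, which Lemma \ref{lem:unbiased} has already shown is necessary for an unbiased estimate to exist.

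The main obstacle is the covariance computation of the second step: $\mathbb{E}[\eta\eta^\top]$ for $e_{2,k}$ telescopes into the compact expression \eqref{eq:Rd2} only because of the engineered cross-covariance cancellations (the $T_k$-decoupling making $v_{1,k}\perp v_{2,k}$, and $L_kU_{1,k}=0$), so the bulk of the effort is verifying that those terms vanish; once that is done, the first and third steps are essentially routine applications of linear algebra and the Gauss--Markov theorem.
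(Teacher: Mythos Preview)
Your proposal is correct and follows essentially the same route as the paper: write each innovation as $\zeta=Hd+\eta$ with $\mathbb{E}[\eta]=0$, compute $\mathbb{E}[\eta\eta^\top]$ using the engineered uncorrelatedness (in particular $L_kR_kT_{1,k}^\top=0$ via $L_k=L_kU_{2,k}U_{2,k}^\top$ and \eqref{eq:R12}), and then apply the Gauss--Markov/GLS formula. The only item the paper adds beyond your outline is a short trace identity at the end showing that the combined estimate $\hat d_{k}=V_{1,k}\hat d_{1,k}+V_{2,k}\hat d_{2,k}$ is itself BLUE (since ${\rm tr}(P^d_k)={\rm tr}(P^d_{1,k})+{\rm tr}(P^d_{2,k})$ and the unbiased $\hat d_{1,k}$ is unique), which is not strictly required by the theorem statement but is worth including.
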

\begin{proof}
Let $\tilde{z}^{\rm \, I}_{1,k} := z_{1,k}-C_{1,k} \hat{x}_{k|k}-D_{1,k} u_k$, $\tilde{z}^{\rm \, II}_{1,k} := z_{1,k}-C_{1,k} \hat{x}^\star_{k|k}-D_{1,k} u_k$ and $\tilde{z}_{2,k} := z_{2,k}-C_{2,k} \hat{x}_{k|k-1}-D_{2,k} u_k$. Then, we have
\begin{subequations} \label{eq:ztilde1}
\begin{empheq}[left=\empheqlbrace]{align}
\tilde{z}^{\rm \, I}_{1,k} &= \Sigma_k d_{1,k} + e^{\rm \, I}_{1,k}, \\
\tilde{z}^{\rm \, II}_{1,k} &=  \Sigma_k d_{1,k} + e^{\rm \, II}_{1,k},
\end{empheq}
\end{subequations} \vspace{-0.5cm}
\begin{align}
\tilde{z}_{2,k} &= C_{2,k} G_{2,k-1} d_{2,k-1}+e_{2,k}, \label{eq:ztilde2}
\end{align}
where $e^{\rm \, I}_{1,k}:=C_{1,k} \tilde{x}_{k|k} + v_{1,k}$, $e^{\rm \, II}_{1,k}:=C_{1,k} \tilde{x}^\star_{k|k}+ v_{1,k}$ and $e_{2,k}:=C_{2,k} (A_{k-1} \tilde{x}_{k-1|k-1} +G_{1,k-1} \tilde{d}_{1,k-1}+w_{k-1}) +v_{2,k}$. From the unbiasedness of the state and input estimates (Lemma \ref{lem:unbiased}), $\mathbb{E}[e^{\rm \, I}_{1,k}]=0$, $\mathbb{E}[e^{\rm \, II}_{1,k}]=0$ and $\mathbb{E}[e_{2,k}]=0$. Their covariance matrices are given by 
\begin{subequations}
\begin{empheq}[left=\empheqlbrace]{align}
 \tilde{R}^{\rm \, I}_{1,k} :=& \mathbb{E}[e^{\rm \, I}_{1,k} e^{\rm \, I \top}_{1,k}]
=C_{1,k} P^x_{k|k}C_{1,k}^\top +R_{1,k} \\ 
\tilde{R}^{\rm \, II}_{1,k} :=& \mathbb{E}[e^{\rm \, II}_{1,k} e^{\rm \, II \top}_{1,k}]=C_{1,k} P^{\star \, x}_{k|k}C_{1,k}^\top +R_{1,k}
\end{empheq}
\end{subequations} \vspace{-0.5cm}
\begin{align}
\tilde{R}_{2,k} :=& \mathbb{E}[e_{2,k} e_{2,k}^\top]=C_{2,k} \tilde{P}_{k}C_{2,k}^\top +R_{2,k}
\end{align}
where the simplified expressions above is obtained by applying $\mathbb{E}[\tilde{x}_{k|k} v_{1,k}^\top]=(\mathbb{E}[v_{1,k} \tilde{x}_{k|k}^\top])^\top=0$, $\mathbb{E}[\tilde{x}_{k|k} w_k^\top]=0$, $\mathbb{E}[\tilde{d}_{1,k} w_k^\top]=0$, $\mathbb{E}[\tilde{x}_{k-1|k-1} v_{2,k}^\top]=0$, $\mathbb{E}[\tilde{d}_{1,k-1} v_{2,k}^\top]=0$ and $\mathbb{E}[{w}_{k-1} v_{2,k}^\top]=0$, as well as $L_k=L_{k} U_{2,k} U_{2,k}^\top$ from Lemma \ref{lem:unbiased} and \eqref{eq:R12} to obtain $L_k R_k T_{1,k}^\top=L_k U_{2,k} U_{2,k}^\top R_k T_{1,k}^\top=L_k U_{2,k} R_{21,k}=0$.
Next, we obtain the estimates for $\hat{d}_{1,k}$ and $\hat{d}_{2,k}$ given by \eqref{eq:variant1},  \eqref{eq:d2}, \eqref{eq:Mk} and \eqref{eq:Mk2} by applying the well known generalized least squares (GLS) estimate (see, e.g., \cite[Theorem 3.1.1]{Sayed.2003}), which are linear minimum-variance unbiased estimates, a.k.a. as best linear unbiased estimates (BLUE). Note that since $\Sigma_k$ is invertible, there is one unique unbiased estimate of $\hat{d}_{1,k}$.
Since $M_{1,k} \Sigma_k=I$ and $M_{2,k} C_{2,k} G_{2,k-1}=I$, the input estimate errors, and their covariance matrices are as follows
\begin{align} \label{eq:dtilde}
\nonumber \tilde{d}_{1,k}^{\rm \, I} &=-M_{1,k} e_{1,k}^{\rm \, I},  \tilde{d}_{1,k}^{\rm \, II} =-M_{1,k} e_{1,k}^{\rm \, II}, \\
\nonumber\tilde{d}_{2,k-1} &=-M_{2,k} e_{2,k} \\
\nonumber P_{1,k}^d&=\mathbb{E}[\tilde{d}_{1,k} \tilde{d}_{1,k}^\top]=M_{1,k} \mathbb{E}[e_{1,k} e_{1,k}^\top] M_{1,k}^\top \\
&=\Sigma_k^{-1} \tilde{R}_{1,k} \Sigma_k^{-1}\\
\nonumber  P_{2,k-1}^d&=\mathbb{E}[\tilde{d}_{2,k-1} \tilde{d}_{2,k-1}^\top]=M_{2,k} \mathbb{E}[e_{2,k} e_{2,k}^\top] M_{2,k}^\top  \\
\nonumber &=(G_{2,k-1}^\top C_{2,k}^\top \tilde{R}_{2,k}^{-1} C_{2,k} G_{2,k-1})^{-1}
\end{align} 
Finally, we note the following equality:
\begin{align}
&{\rm tr} (\mathbb{E}[\tilde{d}_k \tilde{d}_k^\top])={\rm tr} (\mathbb{E}[V_k \begin{bmatrix} \tilde{d}_{1,k} \\ \tilde{d}_{2,k} \end{bmatrix} \begin{bmatrix} \tilde{d}_{1,k} & \tilde{d}_{2,k} \end{bmatrix} V_k^\top]) \label{eq:traced}\\
\nonumber  &= {\rm tr} (V_k^\top V_k \mathbb{E}[\begin{bmatrix} \tilde{d}_{1,k} \\ \tilde{d}_{2,k} \end{bmatrix} \begin{bmatrix} \tilde{d}_{1,k} & \tilde{d}_{2,k} \end{bmatrix}])={\rm tr} (P^d_{1,k})+{\rm tr} (P^d_{2,k}).
\end{align} 
Since the unbiased estimate of $\hat{d}_{1,k}$ is unique, we have $\min {\rm tr} (\mathbb{E}[\tilde{d}_k \tilde{d}_k^\top])= {\rm tr} (\mathbb{E}[\tilde{d}_{1,k} \tilde{d}_{1,k}^\top]) + \min {\rm tr} (\mathbb{E}[\tilde{d}_{2,k} \tilde{d}_{2,k}^\top])$, from which it can be observed that the unbiased estimate $\hat{d}_k$ has minimum variance when $\hat{d}_{1,k}$ and $\hat{d}_{2,k}$ have minimum variances.
\end{proof}

\begin{rem}
Moreover, if $w_{i,k}$ and $v_{i,k}$ for $i=\{1,2\}$ are white Gaussian noises, which lead to $e_{i,k}$ being white and Gaussian, then \eqref{eq:variant1}, \eqref{eq:variant2}, \eqref{eq:d2} and \eqref{eq:d} also provide the minimum variance unbiased (MVU) input estimate.
\end{rem}

\subsection{Time Update} \label{sec:timeUpdate}

The time update is given by \eqref{eq:time} and \eqref{eq:xstar}, and the error in the propagated state estimate by \eqref{eq:x_kk-1} and its covariance matrix are given by

\vspace{-0.5cm}
\small\begin{align}
\nonumber P^{\star x}_{k|k}=&\begin{bmatrix} A_{k-1}^\top \\ G_{1,k-1}^\top \\ G_{2,k-1}^\top \end{bmatrix}^\top \begin{bmatrix} P^x_{k-1|k-1} & P^{xd}_{1,k-1} & P^{xd}_{2,k-1} \\ P^{xd\; \top}_{1,k-1} & P^d_{1,k-1} & P^d_{12,k-1} \\ P^{xd\; \top}_{2,k-1} & P^{d \; \top}_{12,k-1} & P^d_{2,k-1} \end{bmatrix} \begin{bmatrix} A_{k-1}^\top \\ G_{1,k-1}^\top \\ G_{2,k-1}^\top \end{bmatrix}\\ & \hspace{-0.9cm}+Q_{k-1}-G_{2,k-1} M_{2,k} C_{2,k} Q_{k-1} - Q_{k-1} C_{2,k}^\top M_{2,k}^\top G_{2,k-1}^\top \label{eq:Pstar}.
\end{align}
\normalsize
Using \eqref{eq:dtilde}, \eqref{eq:Mk} and \eqref{eq:Mk2}, \eqref{eq:Pstar} can be rewritten as
\begin{align}
\nonumber P^{\star x}_{k|k}=&(I-G_{2,k-1} M_{2,k} C_{2,k}) \mathring{P}_k (I-G_{2,k-1} M_{2,k} C_{2,k})^\top \\
&+G_{2,k-1} M_{2,k} R_{2,k} M_{2,k}^\top G_{2,k-1}^\top \label{eq:Pstar2}
\end{align}
where we applied $L_k=L_k U_{2,k} U_{2,k}^\top$ from Lemma \ref{lem:unbiased} and $T_{1,k} R_k T_{2,k}^\top=0$ from \eqref{eq:R12}, and defined $\mathring{A}_{k}:=\hat{A}_{k}-A_{k} L_{k} C_{k}$, $\mathring{G}_{k}:=G_{1,k} M_{1,k}T_{1,k}+A_{k} L_{k}$,  
 and $\mathring{P}_{k}$ as follows:
\begin{empheq}[left=\empheqlbrace]{align} \label{eq:Pring}
 \mathring{P}^{{\rm \, I}}_{k} :=&\tilde{P}_k \\ 
\nonumber \hspace{0.2cm} \mathring{P}^{{\rm \, II}}_{k} :=&\mathring{A}_{k-1} P^{\star x}_{k-1|k-1} \mathring{A}_{k-1}^\top +{Q}_{k-1}+\mathring{G}_{k-1} R_{k-1} \mathring{G}_{k-1}^\top \\
 \nonumber & + \mathring{A}_{k-1} G_{2,k-2} M_{2,k-1} U_{2,k-1}^\top R_{k-1} L_{k-1}^\top A_{k-1}^\top\\
 \nonumber & +A_{k-1} L_{k-1} R_{k-1} U_{2,k-1} M_{2,k-1}^\top G_{2,k-2}^\top \mathring{A}_{k-1}^\top. 
\end{empheq}\vspace{-1.8cm}

\subsection{Measurement Update} \label{sec:measUpdate}

In the measurement update step, the measurement $y_k$ is used to update the propagated estimate of $\hat{x}^\star_{k|k}$ and $P^{\star x}_{k|k}$. From \eqref{eq:y} and \eqref{eq:stateEst}, the updated state estimate error is given by \eqref{eq:x_kk}
where the constraint $L_k U_{1,k}=0$ (Lemma \ref{lem:unbiased}) must be imposed for all $k$ such that the state estimate is unbiased ($\mathbb{E}[\tilde{x}_{k|k}]=0$) for all possible $d_{1,k}$, since $\Sigma_k$ has full rank.  Note that the residual/innovations term in the measurement update step given in \eqref{eq:stateEst} appears to not contain an $H_k \hat{d}_k$ term as would be expected. This term is actually present, but has been nullified by the unbiasedness constraint (Lemma \ref{lem:unbiased}), since $L_k H_k =L_k U_{1,k} \Sigma_k V_{1,k}^\top=0$. This is also in line with the practical reason that the unknown input estimate is not yet available.
Next, the covariance matrix of the state error is computed as
\begin{align}
\nonumber P^x_{k|k}=&(I-L_k C_k) P^{\star x}_{k|k} (I-L_k C_k)^\top + L_k R_k L_k^\top \\
\nonumber &+(I-L_k C_k) G_{2,k-1}M_{2,k} U_{2,k}^\top R_k L_k^\top  \\
\nonumber &+L_k R_k U_{2,k} M_{2,k}^\top G_{2,k-1}^\top (I-L_k C_k)^\top \\
:=& P^{\star x}_{k|k} + {L}_k \tilde{R}^\star_k {L}_k -{L}_k {S}_k^\top-{S}_k {L}_k^\top \label{eq:cov}
\end{align}
where $\mathbb{E}[\tilde{x}^\star _{k|k} v_k^\top]=-G_{2,k-1} M_{2,k} U_{2,k}^\top R_k$, and we defined $\tilde{R}^\star_k:= C_{k} P^{\star x}_{k|k} C_{k}^\top +R_{k} -C_{k} G_{2,k-1} M_{2,k} U_{2,k}^\top R_{k}-R_{k} U_{2,k} M_{2,k}^\top G_{2,k-1}^\top C_{k}^\top$ and ${S}_k:= -G_{2,k-1} M_{2,k} U_{2,k}^\top R_{k}+P^{\star x}_{k|k} C_{k}^\top$.
Using \eqref{eq:Pstar2}, we can rewrite the expression $\tilde{R}^\star_k=N_k \hat{R}_k N_k^\top$ where $\hat{R}_k:=C_k \mathring{P}_k C_k^\top +R_k$, $N_k:=I-C_k G_{2,k-1} M_{2,k} U_{2,k}^\top$ and $ \mathring{P}_k$ as defined in \eqref{eq:Pring}.

To obtain an unbiased minimum variance estimator, we then proceed to derive the optimal gain matrix $L_k$, by minimizing the trace of \eqref{eq:cov}, since the trace represents the sum of the estimation error variances of the states, subject to the constraint $L_k U_{1,k} =0$. However, the next lemma shows that $\tilde{R}^\star_k=N_k \hat{R}_k N_k^\top$ is singular because $N_k$ is rank deficient, except when $p=p_H$, i.e., ${H_k}$ has full rank.

\begin{lem}
Consider $M_{2,k}$ that satisfies \eqref{eq:Mk2}, then 
$N_k$ has rank $p_R=l-p+p_{H_{k-1}}$ and $p_{H_{k-1}} \leq p_R \leq l$.
\end{lem}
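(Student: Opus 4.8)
The plan is to compute the rank of $N_k := I_l - C_k G_{2,k-1} M_{2,k} U_{2,k}^\top$ directly by analyzing the structure of the matrix $C_k G_{2,k-1} M_{2,k} U_{2,k}^\top$. First I would observe that $M_{2,k}$ from \eqref{eq:Mk2} is a left inverse of $C_{2,k} G_{2,k-1}$, i.e. $M_{2,k} C_{2,k} G_{2,k-1} = I_{p-p_{H_{k-1}}}$ (which holds by Lemma \ref{lem:unbiased}, and requires ${\rm rk}(C_{2,k} G_{2,k-1}) = p - p_{H_{k-1}}$), so that $M_{2,k}$ has full row rank $p - p_{H_{k-1}}$. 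Since $C_{2,k} = U_{2,k}^\top C_k$, the product $E_k := G_{2,k-1} M_{2,k} U_{2,k}^\top$ satisfies $C_{2,k} E_k = U_{2,k}^\top C_k E_k = (C_{2,k}G_{2,k-1}) M_{2,k} U_{2,k}^\top = U_{2,k}^\top$; the key point is that $C_k E_k$ is a projection-like matrix that acts as the identity on the subspace ${\rm Ra}(U_{2,k})$.

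The core computation: note $N_k U_{2,k} = U_{2,k} - C_k G_{2,k-1} M_{2,k} U_{2,k}^\top U_{2,k} = U_{2,k} - C_k G_{2,k-1} M_{2,k}$, but more usefully, left-multiply $N_k$ by $U_{2,k}^\top$ to get $U_{2,k}^\top N_k = U_{2,k}^\top - U_{2,k}^\top C_k G_{2,k-1} M_{2,k} U_{2,k}^\top = U_{2,k}^\top - C_{2,k} G_{2,k-1} M_{2,k} U_{2,k}^\top = U_{2,k}^\top - U_{2,k}^\top = 0$. Hence every row of $U_{2,k}^\top$ — that is, an $(l-p_{H_{k-1}})$-dimensional subspace of the left null space — annihilates $N_k$, so ${\rm rk}(N_k) \le l - (l - p_{H_{k-1}}) = p_{H_{k-1}}$... wait, that contradicts the claim; the rank should be $l - p + p_{H_{k-1}}$, not $p_{H_{k-1}}$. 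Let me reconsider: $U_{2,k}$ has dimensions $l \times (l - p_{H_k})$, so $U_{2,k}^\top N_k = 0$ gives a left null space of dimension $l - p_{H_k}$, forcing ${\rm rk}(N_k) \le p_{H_k}$, which is still not matching. The resolution must be that $M_{2,k} U_{2,k}^\top U_{2,k} = M_{2,k}$ is wrong unless we are careful — actually $U_{2,k}^\top U_{2,k} = I$, so that step is fine, but $C_{2,k} G_{2,k-1} M_{2,k} \ne I_l$; rather $M_{2,k} C_{2,k} G_{2,k-1} = I$, and $C_{2,k} G_{2,k-1} M_{2,k}$ is only a projection of rank $p - p_{H_{k-1}}$ onto ${\rm Ra}(C_{2,k}G_{2,k-1})$, NOT the identity. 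So the plan is to instead write $N_k$ in the basis given by the unitary $U_k = [U_{1,k}\ U_{2,k}]$: compute $U_k^\top N_k U_k$ as a $2\times2$ block matrix and read off the rank from the block structure, where the $(2,2)$ block becomes $I_{l-p_{H_k}} - C_{2,k}G_{2,k-1}M_{2,k}$, a projection of rank $l - p_{H_k} - (p - p_{H_{k-1}})$, plus the $(2,1)$ coupling block contributes the remaining $p_{H_k}$... giving total rank $p_{H_k} + (l - p_{H_k}) - (p - p_{H_{k-1}}) = l - p + p_{H_{k-1}} = p_R$ when $p_{H_k} = p_{H_{k-1}}$.

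Concretely I would compute $U_k^\top N_k U_k = I_l - (U_k^\top C_k G_{2,k-1} M_{2,k})(U_{2,k}^\top U_k) = I_l - \begin{bmatrix} C_{1,k} G_{2,k-1} M_{2,k} \\ C_{2,k} G_{2,k-1} M_{2,k} \end{bmatrix}\begin{bmatrix} 0 & I_{l-p_{H_k}} \end{bmatrix}$ — here I use $U_{2,k}^\top U_k = [\,0\ \ I_{l-p_{H_k}}\,]$ — which equals $\begin{bmatrix} I_{p_{H_k}} & -C_{1,k}G_{2,k-1}M_{2,k} \\ 0 & I_{l-p_{H_k}} - C_{2,k}G_{2,k-1}M_{2,k} \end{bmatrix}$. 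Since the $(1,1)$ block is the full-rank identity $I_{p_{H_k}}$ and the matrix is block upper-triangular, ${\rm rk}(N_k) = {\rm rk}(U_k^\top N_k U_k) = p_{H_k} + {\rm rk}(I_{l-p_{H_k}} - C_{2,k}G_{2,k-1}M_{2,k})$. Now $C_{2,k}G_{2,k-1}M_{2,k}$ is idempotent (since $M_{2,k}C_{2,k}G_{2,k-1}=I$) with rank equal to ${\rm rk}(C_{2,k}G_{2,k-1}) = p - p_{H_{k-1}}$, so $I - C_{2,k}G_{2,k-1}M_{2,k}$ has rank $(l - p_{H_k}) - (p - p_{H_{k-1}})$; under the standing assumption in this section that $p_{H_k} = p_{H_{k-1}}$ (consistent with Lemma \ref{lem:main} being invoked at consecutive times, and the notation $p_R = l - p + p_{H_{k-1}}$), this gives ${\rm rk}(N_k) = p_{H_{k-1}} + (l - p_{H_{k-1}}) - (p - p_{H_{k-1}}) = l - p + p_{H_{k-1}} = p_R$. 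Finally the bounds $p_{H_{k-1}} \le p_R \le l$ follow from $0 \le p - p_{H_{k-1}} \le p \le l$: the upper bound $p_R \le l$ is immediate since $p \ge p_{H_{k-1}}$, and the lower bound $p_R \ge p_{H_{k-1}}$ is equivalent to $p \le l$, which is part of the standing assumptions in Section \ref{sec:Problem}. The main obstacle I anticipate is keeping the dimensions straight between $p_{H_k}$ and $p_{H_{k-1}}$ and correctly identifying which block-triangularization makes the rank computation transparent — the unitary change of basis by $U_k$ is the crucial simplifying move, after which everything reduces to the rank of an idempotent.
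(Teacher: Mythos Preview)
Your final approach (conjugate by the unitary $U_k$, obtain a block upper-triangular matrix with invertible $(1,1)$ block, and compute the rank of the $(2,2)$ block $I_{l-p_{H_k}}-C_{2,k}G_{2,k-1}M_{2,k}$ using that $C_{2,k}G_{2,k-1}M_{2,k}$ is idempotent of rank $p-p_{H_{k-1}}$) is correct and yields ${\rm rk}(N_k)=p_{H_k}+(l-p_{H_k})-(p-p_{H_{k-1}})=l-p+p_{H_{k-1}}$. Two small cleanups: (i) the $(1,2)$ block should be $-U_{1,k}^\top C_k G_{2,k-1}M_{2,k}$, not $-C_{1,k}G_{2,k-1}M_{2,k}$, since $C_{1,k}=T_{1,k}C_k\neq U_{1,k}^\top C_k$ in general --- this is harmless because the $(1,2)$ block does not affect the rank once the $(1,1)$ block is invertible; (ii) your concern about needing $p_{H_k}=p_{H_{k-1}}$ is unwarranted --- the $p_{H_k}$ terms cancel exactly in your own computation, so no extra assumption is needed.

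The paper's argument is shorter and more direct: it observes that $N_k$ itself is idempotent. Indeed, with $P:=C_kG_{2,k-1}M_{2,k}U_{2,k}^\top$ one has $P^2=C_kG_{2,k-1}(M_{2,k}U_{2,k}^\top C_kG_{2,k-1})M_{2,k}U_{2,k}^\top=C_kG_{2,k-1}(M_{2,k}C_{2,k}G_{2,k-1})M_{2,k}U_{2,k}^\top=P$, so $N_k=I-P$ is idempotent and ${\rm rk}(N_k)=l-{\rm rk}(P)$. Since $M_{2,k}U_{2,k}^\top\cdot C_kG_{2,k-1}=I_{p-p_{H_{k-1}}}$, the reverse product $P$ has rank exactly $p-p_{H_{k-1}}$, giving ${\rm rk}(N_k)=l-p+p_{H_{k-1}}$ in one line. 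Your change of basis by $U_k$ is a perfectly valid alternative --- it trades the single ``$N_k$ is idempotent'' observation for a block-triangular structure, and then still invokes idempotence, but on the smaller matrix $C_{2,k}G_{2,k-1}M_{2,k}$ living in the $(2,2)$ block. The two routes are close cousins; the paper's is the more economical one.
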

\begin{proof}
Since $M_{2,k}$ satisfies \eqref{eq:Mk2}, 
$N_k$ is an idempotent matrix, i.e., $N_k N_k= N_k$. From \cite[Fact 3.12.9 and Proposition 2.6.3]{Bernstein.2009} and ${\rm rk}(C_{2,k} G_{2,k-1})=p-p_{H_{k-1}}$, we obtain $p_R:={\rm rk}(I_l-C_k G_{2,k-1} M_{2,k} U_{2,k}) =l- {\rm rk} (C_k G_{2,k-1} M_{2,k} U_{2,k}) = l-p +p_{H_{k-1}} \leq l$. Since we assumed $l \geq p$, we have $p_{H_{k-1}} \leq p_R \leq l$.
\end{proof}

Hence, the optimal gain matrix $L_k$ is in general not unique. Similar to \cite{Gillijns.2007}, we propose a gain matrix $L_k$ of the form
$L_k=\overline{L}_k \Gamma_k$
where $\Gamma_k \in \mathbb{R}^{p_R \times l}$ 
is an arbitrary matrix which has to be chosen such that $\Gamma_k \tilde{R}^\star_k \Gamma_k^\top$ has full rank. With this, we compute the optimal gain $\overline{L}_k$ and thus $L_k$ in the following theorem.

\begin{thm} \label{thm:mvu_state}
Suppose $\hat{x}_{0|0}=\hat{x}^\star_{0|0}$ are unbiased, 
and let $\Gamma_k \in \mathbb{R}^{p_R \times l}$ be chosen such that $\Gamma_k \tilde{R}^\star_k \Gamma_k^\top$ has full rank. Then, the minimum-variance unbiased state estimator is obtained with the gain matrix $L_k$ given by 
\begin{align}
L_k=K_k \check{R}_k (I_l-H_{1,k} M_{1,k}^\star) =K_k  (I_l-H_{1,k} M_{1,k}^\star)^\top \check{R}_k \hspace{-0.1cm}\label{eq:Lk}
\end{align}
where $H_{1,k}=U_{1,k} \Sigma_k$, $M^\star_{1,k}:=\Sigma_k^{-1} (U_{1,k}^\top \check{R}_k U_{1,k})^{-1} U_{1,k}^\top \check{R}_k$, $\check{R}_k:=\Gamma_k^\top (\Gamma_k \tilde{R}_{k}^\star \Gamma_k^\top)^{-1} \Gamma_k$, and
\begin{align*}
K_k:=& (P^{\star x}_{k|k} C_k^\top - G_{2,k-1} M_{2,k} U_{2,k}^\top R_k)\\
=& (\mathring{P}_k C_k^\top - G_{2,k-1} M_{2,k} U_{2,k}^\top \hat{R}_k)N_k^\top,
\end{align*}
with $M_{2,k}$ and $\tilde{P}_k$ as defined in the Theorem \ref{thm:mse}, and $\hat{R}_k$ and $\tilde{R}^\star_k$ as defined in the text following \eqref{eq:cov}.
\end{thm}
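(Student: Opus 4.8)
\emph{Proof proposal.} The plan is to pose this as a constrained quadratic minimization in $L_k$. From \eqref{eq:cov},
\[
{\rm tr}(P^x_{k|k}) = {\rm tr}(P^{\star x}_{k|k}) + {\rm tr}(L_k \tilde{R}^\star_k L_k^\top) - 2\,{\rm tr}(L_k K_k^\top),
\]
since $S_k = K_k$; we must minimize this over $L_k$ subject to the unbiasedness constraint $L_k U_{1,k} = 0$ from Lemma~\ref{lem:unbiased}. Because $\tilde{R}^\star_k = N_k \hat{R}_k N_k^\top$ is singular whenever $p \neq p_{H_{k-1}}$ (by the preceding lemma ${\rm rk}\,N_k = p_R < l$), the quadratic term is only positive semidefinite and the minimizer is non-unique; the device, as in \cite{Gillijns.2007}, is to write $L_k = \overline{L}_k \Gamma_k$ with $\Gamma_k$ chosen so that $\bar{R}_k := \Gamma_k \tilde{R}^\star_k \Gamma_k^\top \succ 0$. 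The objective then becomes the strictly convex quadratic ${\rm tr}(\overline{L}_k \bar{R}_k \overline{L}_k^\top) - 2\,{\rm tr}(\overline{L}_k \Gamma_k K_k^\top)$ in $\overline{L}_k$, to be minimized under the linear constraint $\overline{L}_k \Gamma_k U_{1,k} = 0$.

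Next I would introduce a Lagrange-multiplier matrix $\Lambda$ and set the derivative of ${\rm tr}(\overline{L}_k \bar{R}_k \overline{L}_k^\top) - 2\,{\rm tr}(\overline{L}_k \Gamma_k K_k^\top) + 2\,{\rm tr}(\Lambda^\top \overline{L}_k \Gamma_k U_{1,k})$ with respect to $\overline{L}_k$ to zero, using $\partial\,{\rm tr}(XAX^\top)/\partial X = 2XA$ and $\partial\,{\rm tr}(XB)/\partial X = B^\top$. This gives $\overline{L}_k \bar{R}_k = (K_k - \Lambda U_{1,k}^\top)\Gamma_k^\top$, hence $L_k = \overline{L}_k \Gamma_k = (K_k - \Lambda U_{1,k}^\top)\check{R}_k$ with $\check{R}_k = \Gamma_k^\top \bar{R}_k^{-1}\Gamma_k$. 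Imposing $L_k U_{1,k} = 0$ forces $\Lambda U_{1,k}^\top \check{R}_k U_{1,k} = K_k \check{R}_k U_{1,k}$, and — assuming $U_{1,k}^\top \check{R}_k U_{1,k}$ is invertible — $\Lambda U_{1,k}^\top = K_k \check{R}_k U_{1,k}(U_{1,k}^\top \check{R}_k U_{1,k})^{-1}U_{1,k}^\top$. Substituting back and recognizing $H_{1,k} = U_{1,k}\Sigma_k$ and $M_{1,k}^\star = \Sigma_k^{-1}(U_{1,k}^\top \check{R}_k U_{1,k})^{-1}U_{1,k}^\top \check{R}_k$ yields $L_k = K_k(I_l - H_{1,k}M_{1,k}^\star)^\top \check{R}_k$; by symmetry of $\check{R}_k$ this also equals $K_k \check{R}_k(I_l - H_{1,k}M_{1,k}^\star)$, which is \eqref{eq:Lk}. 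Strict convexity (Hessian $\bar{R}_k \succ 0$) together with linearity of the constraint makes this KKT point the global minimizer over admissible gains. The second expression for $K_k$ then follows by inserting \eqref{eq:Pstar2} for $P^{\star x}_{k|k}$ into $K_k = P^{\star x}_{k|k}C_k^\top - G_{2,k-1}M_{2,k}U_{2,k}^\top R_k$, expanding, and factoring out the common right factor $N_k^\top = I - U_{2,k}M_{2,k}^\top G_{2,k-1}^\top C_k^\top$, using $U_{2,k}^\top R_k U_{2,k} = R_{2,k}$ and the idempotence of $N_k$.

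Two points require care. The invertibility of $U_{1,k}^\top \check{R}_k U_{1,k}$ — needed for $M_{1,k}^\star$ and hence \eqref{eq:Lk} to be defined — reduces, via $\check{R}_k = \Gamma_k^\top \bar{R}_k^{-1}\Gamma_k$ with $\bar{R}_k \succ 0$, to showing that $\Gamma_k U_{1,k}$ has full column rank $p_{H_{k-1}}$. I would derive this from three facts: $N_k$ is idempotent (because $M_{2,k}C_{2,k}G_{2,k-1} = I$), $N_k U_{1,k} = U_{1,k}$ (because $U_{2,k}^\top U_{1,k} = 0$), and $\Gamma_k N_k$ has full row rank $p_R$ (equivalent to $\bar{R}_k = \Gamma_k N_k \hat{R}_k N_k^\top \Gamma_k^\top \succ 0$ with $\hat{R}_k \succ 0$); since then $\dim {\rm Ker}(\Gamma_k N_k) = l - p_R = \dim {\rm Ker}(N_k)$ forces ${\rm Ker}(\Gamma_k N_k) = {\rm Ker}(N_k)$, and ${\rm Ra}(U_{1,k}) \cap {\rm Ker}(N_k) = \{0\}$ because $N_k$ fixes $U_{1,k}$, we get that $\Gamma_k U_{1,k} = \Gamma_k N_k U_{1,k}$ is injective — this is precisely where the hypothesis ${\rm rk}(C_{2,k}G_{2,k-1}) = p - p_{H_{k-1}}$ enters, through the rank counts. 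One should also check that the admissible set is nonempty, so the multiplier equation is consistent. I expect these rank/consistency arguments, rather than the otherwise routine trace calculus, to be the main obstacle; the remainder is bookkeeping of the same type already used to derive \eqref{eq:Pstar2} and the identity $\tilde{R}^\star_k = N_k \hat{R}_k N_k^\top$.
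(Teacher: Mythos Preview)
Your proposal is correct and follows essentially the same Lagrange-multiplier approach as the paper: parameterize $L_k=\overline{L}_k\Gamma_k$, form the Lagrangian for ${\rm tr}(P^x_{k|k})$ under the constraint $\overline{L}_k\Gamma_k U_{1,k}=0$, and solve the resulting linear system. You actually supply more detail than the paper does---in particular the invertibility of $U_{1,k}^\top\check{R}_k U_{1,k}$ (which the paper uses without comment) and the factorization giving the second form of $K_k$; the only slip is a minor index issue (the column rank of $\Gamma_k U_{1,k}$ should be $p_{H_k}$, not $p_{H_{k-1}}$), which does not affect the argument.
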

\begin{proof}
By Lemma \ref{lem:unbiased}, the state estimates are unbiased.
Next, we employ the optimization approach with Lagrange multipliers ($\Lambda_k \in \mathbb{R}^{n \times p_H}$) in \cite{Kitanidis.1987,Gillijns.2007b,Yong.Zhu.Frazzoli.2013}, to find the particular gain $L_k$ that minimizes the trace of of the covariance matrix $P^x_{k|k}$, while being subjected to the constraint $L_k U_{1,k}=0$ which is a necessary condition for obtaining an unbiased estimate. This constrained optimization problem can be solved using differential calculus with the Lagrangian given by
\begin{align*}
\mathcal{L}(\overline{L}_k,\Lambda_k):={\rm tr}(P^x_{k|k})-2 \; {\rm tr}(\overline{L}_k \Gamma_k U_{1,k} \Lambda_k^\top)
\end{align*}
with a filter gain of the form $L_k=\overline{L}_k \Gamma_k$.
Differentiating the Lagrangian with respect to $\overline{L}_k$ and $\Lambda_k$, and setting it to zero, we obtain
\begin{align*}
\frac{\partial{\mathcal{L}}}{\partial \overline{L}_k} &= 2(\Gamma_k \tilde{R}^\star_k \Gamma_k^\top \overline{L}_k^\top  -\Gamma_k U_{1,k} \Sigma_k \Lambda_k^\top\\ & \quad - \Gamma_k (C_k P^{\star x}_{k|k}-R_k U_{2,k} M_{2,k}^\top G_{2,k-1}^\top)) &= 0 \\ 
\frac{\partial{\mathcal{L}}}{\partial \Lambda_k} &= -2 \overline{L}_k \Gamma_k U_{1,k} &= 0
\end{align*}
Solving the above linear system of equations and simplifying, we obtain the optimal gain matrix \eqref{eq:Lk}. 
\end{proof}

One choice of $\Gamma_k$ (first proposed in \cite{Darouach.1997} using the singular value decomposition of $\hat{R}^{-\frac{1}{2}}_k C_k G_{2,k-1}=\tilde{U}_k \tilde{\Sigma}_k \tilde{V}_k^\top $) such that $\Gamma_k \tilde{R}^\star_k \Gamma_k^\top$ has full rank, is given by
\begin{align}
\Gamma_k=\begin{bmatrix} 0 & I_{p_R} \end{bmatrix} \tilde{U}_k^\top \hat{R}_k^{-\frac{1}{2}},
\end{align}
where $\hat{R}_k$ and $\tilde{R}^\star_k$ are defined in the text following \eqref{eq:cov}, and $p_R=l-p-p_{H_{k-1}}$.
With this choice of $\Gamma_k$, we obtain $\Gamma_k \tilde{R}^\star_k \Gamma_k^\top=I_{p_R}$ which is invertible. Following the procedure in \cite[Appendix]{Darouach.1997}, it can be shown that \eqref{eq:Lk} reduces to
\begin{align}
L_k=K_k(I_l-H_{1,k} M_{1,k}^\star)^\top \hat{R}_k^{-1}.
\end{align}
with $M^\star_{1,k}:=\Sigma_k^{-1} (U_{1,k}^\top \hat{R}_k^{-1} N_k U_{1,k})^{-1} U_{1,k}^\top \hat{R}_k N_k$, which is independent of $\tilde{U}_k$ and as such, the ``expensive" singular value decomposition step can be bypassed. Another choice would be to use the Moore-Penrose pseudoinverse ($^\dagger$) such that $\check{R}_k=(\tilde{R}^\star_k)^\dagger$. Equivalently, we have $L_k=L_k \begin{bmatrix} U_{1,k} & U_{2,k} \end{bmatrix} \begin{bmatrix} U_{1,k}^\top \\ U_{2,k}^\top \end{bmatrix}=\tilde{L}_k U_{2,k}^\top$ where  we defined 
\begin{align}
\tilde{L}_k:=L_{k} U_{2,k}=K_k(I_l-H_{1,k} M_{1,k}^\star)^\top \hat{R}_k^{-1} U_{2,k}.
\end{align}

In addition, we can compute the (cross-)covariances as 
\begin{subequations}  \label{eq:Pxd1}
\begin{empheq}[left=\empheqlbrace]{align}
\nonumber  P^{xd {\rm \, I}}_{1,k} &= (P^{dx {\rm \, I}}_{1,k})^\top = -P^x_{k|k} C_{1,k}^\top M_{1,k}^\top +L_{k} R_k T_{1,k}^\top M_{1,k}^\top \hspace{0.3cm} \hspace{-1.35cm}\\& \qquad \qquad \ \ \ = -P^x_{k|k} C_{1,k}^\top M_{1,k}^\top \\
\nonumber P^{xd {\rm \, II}}_{1,k} &= (P^{dx {\rm \, II}}_{1,k})^\top = L_{k} R_k T_{1,k}^\top M_{1,k}^\top\\
\nonumber &  \qquad- L_k R_k T_{2,k}^\top M_{2,k}^\top G_{2,k-1}^\top C_{1,k}^\top M_{1,k}^\top\\& \qquad -(I-L_k C_k)P^{\star x}_{k|k} C_{1,k}^\top M_{1,k}^\top
\end{empheq}
\end{subequations} \vspace{-0.7cm}
\begin{align}
P^{xd}_{2,k-1} &= (P^{dx}_{2,k-1})^\top=-P^x_{k-1|k-1} A_{k-1}^\top C_{2,k}^\top M_{2,k}^\top \\
\nonumber & \qquad \qquad \qquad \quad -P^{xd}_{1,k-1} G_{1,k-1}^\top C_{2,k}^\top M_{2,k}^\top \\ 
P^d_{12,k-1}&=(P^d_{21,k-1})^\top=-P^{dx}_{1,k-1} A_{k-1}^\top C_{2,k}^\top M_{2,k}^\top\\
\nonumber &  \qquad \qquad \qquad \quad -P^{d}_{1,k-1} G_{1,k-1}^\top C_{2,k}^\top M_{2,k}^\top\\  
P^d_{k} :&=\begin{bmatrix} V_{1,k} & V_{2,k} \end{bmatrix} \begin{bmatrix} P^d_{1,k} & P^d_{12,k} \\ P^d_{21,k} & P^d_{2,k} \end{bmatrix} \begin{bmatrix} V_{1,k}^{\, \top} \\ V_{2,k}^{\, \top} \end{bmatrix}
\end{align}
 where we can apply $L_k=L_k U_{2,k} U_{2,k}^\top$ from Lemma \ref{lem:unbiased} and \eqref{eq:R12} such that $L_k R_k T_{1,k}^\top M_{1,k}^\top=0$ which resulted in the simplification of \eqref{eq:Pxd1} shown above.

\subsection{Global optimality of ULISE} \label{sec:optimality}

In the following, we relax the recursivity assumption of ULISE for both the state and input estimates and consider $\hat{x}_{k|k}$ and $\hat{d}_{k}$ to be the most general linear combination of the unbiased initial state estimate $\hat{x}_{0|0}$ and $\mathcal{Z}_k$ given in \eqref{eq:matrices}. We first prove that the state update of ULISE has the same optimal form as the filter proposed in \cite[Remark 3]{Cheng.2009}, through which the claim of global optimality of the state estimate over the class of all linear estimators follows from \cite{Kerwin.Prince.2000}. Then, we prove that the input estimate is also globally optimal, which completes the proof of Theorem \ref{thm:globalULISE}.


\begin{proof}[Proof of Theorem \ref{thm:globalULISE}]To this end, we rearrange the latter form of \eqref{eq:stateEst} of state estimation for ULISE with unknown inputs estimated with \eqref{eq:variant1} and \eqref{eq:d2}, to obtain
\begin{align}
\nonumber \hat{x}_{k|k}=&\hat{A}_{k-1} \hat{x}_{k-1|k-1}+ B_{k-1} u_{k-1} +G_{1,k-1} M_{1,k-1} z_{1,k-1} \\
\nonumber &+\overline{K}_k (z_{2,k}-C_{2,k}(\hat{A}_{k-1}\hat{x}_{k-1|k-1}+B_{k-1} u_{k-1} \\ &+G_{1,k-1} M_{1,k-1} z_{1,k-1}))\\
\overline{K}_k=& G_{2,k-1} M_{2,k} + \tilde{L}_k (I-C_{2,k} G_{2,k-1} M_{2,k})
\end{align}
where $\hat{A}_{k-1}=A_{k-1}-G_{1,k-1} M_{1,k-1} C_{1,k-1}$, as previously defined. Repeating the procedure in Section \ref{sec:measUpdate}, $\tilde{L}_k=(\tilde{P}_k C_{2,k}^\top- G_{2,k-1} M_{2,k} \tilde{R}_{2,k}) \overline{N}_k^\top (\overline{N}_k \tilde{R}_{2,k} \overline{N}_k^\top)^{-1}\overline{\Gamma}_k$ and
\begin{align}
P^x_{k|k}&=(I-\overline{K}_k C_{2,k}) \tilde{P}_{k} (I-\overline{K}_k C_{2,k})^\top +\overline{K}_k R_{2,k} \overline{K}_k^\top \hspace{-0.2cm}
\end{align}
where  $\overline{N}_k:= \overline{\Gamma}_k(I-C_{2,k}G_{2,k-1} M_{2,k})$, $\tilde{R}_{2,k}:=C_{2,k} \tilde{P}_k C_{2,k}^\top +R_{2,k}$ and $ \overline{\Gamma}_k$ is an arbitrary matrix such that $\overline{N}_k \tilde{R}_{2,k} \overline{N}_k^\top$ has full rank. 
Thus, the ULISE's state and state covariance update is almost identical to the one considered in \cite{Cheng.2009},  in which only state estimation is considered. The only difference is in the choice of $M_{2,k}$, where $M_{2,k}$ is replaced by $\tilde{M}_{2,k}:=(C_{2,k} G_{2,k-1})^\dagger$ in \cite{Cheng.2009}. More importantly, the state update law is of the optimal form \cite[Remark 3]{Cheng.2009} from which the global optimality of the state estimate over the linear class of estimators according to \cite{Kerwin.Prince.2000}.  

To show that the input estimate is also globally optimal, we consider the input estimate $\hat{d}^g_{k-1}$ to be the most general linear combination of the unbiased initial state estimate $\hat{x}_{0|0}$, as well as $\mathcal{Z}_{1,k}$ and $\mathcal{Z}_{2,k}$ given in \eqref{eq:matrices}. Since $\tilde{z}_{1,i}$ and $\tilde{z}_{2,i}$ as defined for \eqref{eq:ztilde1} and \eqref{eq:ztilde2} are linear combinations of $\hat{x}_{0|0}$, $\mathcal{Z}_{1,i}$ and $\mathcal{Z}_{2,i}$, and of $\hat{x}_{0|0}$, $\mathcal{Z}_{1,i-1}$ and $\mathcal{Z}_{2,i}$, respectively, $\hat{d}^g_{k-1}$ can be expressed as
\vspace{-0.275cm}\begin{align} 
\hat{d}^g_{k-1}= \chi_{0}(k) \hat{x}_{0|0} + \sum_{i=1}^k \chi_{1,i}(k) \tilde{z}_{1,i} + \sum_{i=1}^k \chi_{2,i}(k) \tilde{z}_{2,i}. \label{eq:dhat_g} 
\end{align} 
Clearly, if $\chi_{1,k-1}(k)=V_{1,k-1} M_{1,k-1}$ and $\chi_{2,k}(k)=V_{2,k-1} M_{2,k}$ where $M_{1,k-1}$ and $M_{2,k}$ are as in \eqref{eq:Mk} and \eqref{eq:Mk2}, and if $\chi_{0}(k)$, $\chi_{1,k}(k)$, $\{\chi_{1,i}(k)\}_{i=0}^{k-2}$ and $\{\chi_{2,i}(k)\}_{i=0}^{k-1}$ are zero, then $\hat{d}^g_{k-1}$ is unbiased. To show the converse, we suppose that $\hat{d}^g_{k-1}$ is unbiased, i.e.,  $\mathbb{E}[\hat{d}^g_{k-1}]=V_{1,k-1} d_{1,k-1}+V_{2,k-1} d_{2,k-1}$. Since $d_k$ can take on any arbitrary value and $z_{1,k}$ is a function of $d_{1,k}$, $\chi_{1,k}(k)=0$ such that $\hat{d}^g_{k-1}$ remains unbiased. Moreover, the first measurements containing $d_{1,k-1}$ and $d_{2,k-1}$ are $z_{1,k-1}$ and $z_{2,k}$, then $\mathbb{E}[\chi_{1,k-1}(k)\tilde{z}_{1,k-1}]=V_{1,k-1}d_{1,k-1}$ and $\mathbb{E}[\chi_{2,k}(k)\tilde{z}_{2,k}]=V_{2,k-1}d_{2,k-1}$. Consequently, $\chi_{1,k-1}(k)=V_{1,k-1} M_{1,k-1}$ and $\chi_{2,k}(k)=V_{2,k-1} M_{2,k}$. 
Moreover, for $\hat{d}^g_{k-1}$ to be unbiased, $\chi_{0}(k)=0$, $\{\chi_{1,i}(k)\}_{i=0}^{k-2}=0$ and $\{\chi_{2,i}(k)C_{2,i} G_{2,i-1}\}_{i=0}^{k-1}=0$ must hold. Finally, we prove that the mean squared error $\mathbb{E}[\|d_{k-1}-d^g_{k-1}\|_2^2]$ is minimized when $\{\chi_{2,i}(k)\}_{i=0}^{k-1}=0$. From the unbiasedness conditions of $\hat{d}^g_{k-1}$ and from \eqref{eq:dhat_g}, we have $d_{k-1}-d_{k-1}^g=\tilde{d}_{k-1}-\sum^{k-1}_{i=0} \chi_{2,i}(k) \tilde{z}_{2,i}$ where $\tilde{d}_{k}$ is as defined above Lemma \ref{lem:unbiased}. Since it is straightforward to verify (as in \cite[Lemmas 1 and 2]{Kerwin.Prince.2000}) that $\mathbb{E}[\tilde{d}_{k}(\chi_{2,i}(k) \tilde{z}_{2,i})^\top]=0$ for all $i\leq k$, it follows that
\small \vspace{-0.65cm}
\begin{align*}
&\mathbb{E}[\|d_{k-1}-d^g_{k-1}\|_2^2]\\
&={\rm tr}\{\mathbb{E}[(\tilde{d}_{k-1}-\sum^{k-1}_{i=0} \chi_{2,i}(k) \tilde{z}_{2,i})(\tilde{d}_{k-1}-\sum^{k-1}_{i=0} \chi_{2,i}(k) \tilde{z}_{2,i})^\top]\}\\
&={\rm tr}\{\mathbb{E}[\tilde{d}_{k-1}\tilde{d}_{k-1}^\top]\}+\mathbb{E}[\|\sum^{k-1}_{i=0} \chi_{2,i}(k) \tilde{z}_{2,i}\|_2^2]\
\end{align*}\normalsize \vspace{-0.15cm}
where the first term is minimized by ULISE as is shown in \eqref{eq:traced} and Theorem \ref{thm:mse}, while the latter term is minimized when $\sum^{k-1}_{i=0} \chi_{2,i}(k) \tilde{z}_{2,i}=0$, which occurs when $\{\chi_{2,i}(k)\}^{k-1}_{i=0}=0$, as desired.
 Thus, Theorem \ref{thm:globalULISE} holds.
 \end{proof}

\begin{rem}
We also conclude that the state estimator in \cite{Cheng.2009} implicitly estimates the unknown input, i.e., with \eqref{eq:variant1} and \eqref{eq:d2}, although the replacement of $M_{2,k}$ by $\tilde{M}_{2,k}$ in \eqref{eq:d2} is tantamount to using an ordinary least squares (OLS) estimate instead of the generalized least squares (GLS) estimate, resulting in the same expected estimate but the estimate does not have minimum variance (see discussion in \cite[pp. 223-224]{Draper.Smith.1998}). Furthermore, ULISE provides a family of optimal state estimators parameterized by $\Gamma_k$, whereas the filter in \cite{Cheng.2009} provides a specific solution by choosing $\overline{N}_k$ as the left null matrix of $C_{2,k} G_{2,k}$, i.e., $\overline{N}_k={\rm Null} ((C_{2,k} G_{2,k})^\top)^\top$. More importantly, we have shown that the decorrelation constraint assumed in \cite{Cheng.2009}, such that only $z_{2,k}$ can be used in the state update to avoid obtaining a suboptimal estimator, is justified as a direct consequence of the unbiasedness constraint in Lemma \ref{lem:unbiased}, i.e., $L_k U_{1,k}=0$. By extension, ULISE is also less restrictive than the filter in \cite{Darouach.2003}.
In addition, the unknown input estimates are BLUE, thus, ULISE is globally optimal over the class of all linear unbiased \emph{state and input} estimates for systems with unknown inputs.
However, the same cannot be said of PLISE, as can be seen in the examples of Section \ref{sec:examples}.
\end{rem}

\subsection{Stability of ULISE} \label{sec:stableULISE}

In this section, we prove the stability of the ULISE filter by first reducing
the linear time-varying system with unknown inputs to an equivalent system
without unknown inputs. Then, we use existing results on the stability of the Kalman filter \cite[Section 5]{Anderson.Moore.1981} to obtain the sufficient conditions for the stability of the original system.

\begin{proof}[Proof of Theorem \ref{thm:stableULISE}]
We begin by reducing the system with unknown inputs to one without unknown inputs. From \eqref{eq:stateEst} and \eqref{eq:sysY}, we obtain $\tilde{x}_{k|k}=\tilde{x}^\star_{k|k}-\tilde{L}_k (C_{2,k} \tilde{x}^\star_{k|k}+v_{2,k})$. Then, substituting \eqref{eq:dtilde} into \eqref{eq:x_kk-1} and the above equation, and rearranging, we obtain
\begin{align}
\nonumber \tilde{x}_{k|k}&=\overline{A}_{k-1} \tilde{x}_{k-1|k-1} + \overline{w}_{k-1}-\tilde{L}_k(C_{2,k} \overline{A}_{k-1} \tilde{x}_{k-1|k-1} \\ \label{eq:xtilde_stab}
& \quad +C_{2,k} \overline{w}_k + v_{2,k}), 
\end{align}
where $\overline{A}_{k-1}=(I-G_{2,k-1} {M}_{2,k}C_{2,k}) \hat{A}_{k-1}$ and $\overline{w}_k=-(I-G_{2,k-1} {M}_{2,k} C_{2,k}) (G_{1,k-1} M_{1,k-1} v_{1,k-1}-w_{k-1} -G_{2,k-1} {M}_{2,k} v_{2,k} $. As it turns out, the state estimate error dynamics above is the same for a Kalman filter \cite{KalmanF.1960} for a linear system without unknown inputs:
$x^e_{k+1}=\overline{A}_k x^e_k +\overline{w}_k; \ y^e_k=C_{2,k} x^e_k+v_{2,k}.$
Since the objective for both systems is the same, i.e., to obtain
an unbiased minimum-variance filter, they are equivalent
systems from the perspective of optimal filtering. However, the noise terms of this equivalent system are correlated, i.e., $\mathbb{E}[\overline{w}_k v_{2,k}^\top]=-G_{2,k-1} {M}_{2,k} R_{2,k}$. To transform the system further into one without correlated noise, we employ a common trick of adding a zero term since $y^e_k-C_{2,k} x^e_k-v_{2,k}=0$ to obtain
\begin{align*}
\nonumber x^e_{k+1}&=\overline{A}_k x^e_k +\overline{w}_k - G_{2,k-1} {M}_{2,k} (y^e_k-C_{2,k} x^e_k - v_{2,k})\\
&=\bar{\bar{A}}_k x^e_k +\bar{\bar{u}}_k +\bar{\bar{w}}_k\\
y^e_k&=C_{2,k} x^e_k+v_{2,k}
\end{align*}
where $\bar{\bar{A}}_k=\overline{A}_k+G_{2,k-1} {M}_{2,k} C_{2,k}$, $\bar{\bar{u}}_k= - G_{2,k-1} {M}_{2,k} y^e_k$ is a known input and $\bar{\bar{w}}_k=\overline{w}_k+ G_{2,k-1} {M}_{2,k} v_{2,k}$. The noise terms $\bar{\bar{w}}_k$ and $v_{2,k}$ are uncorrelated with covariances $\bar{\bar{Q}}_k:=\mathbb{E}[\bar{\bar{w}}_k \bar{\bar{w}}^{\top}_k]=(I-G_{2,k-1} {M}_{2,k} C_{2,k})\hat{Q}_{k-1} (I-G_{2,k-1} {M}_{2,k} C_{2,k})^\top$, $R_{2,k}$ and $\mathbb{E}[\bar{\bar{w}}_k v_{2,k}^\top]=0$, where $M_{2,k}$ and $\hat{Q}_{k-1}$ are as defined in Theorem \ref{thm:mse}.

Ideally, if we can compute $\bar{\bar{A}}$ and $\bar{\bar{Q}}$ prior to applying the ULISE algorithm, then the uniform detectability and stabilizability conditions of \cite[Section 5]{Anderson.Moore.1981} can be directly applied to obtain the desired stability property. However, this is not the case as these matrices depend on $P^x_{k-1|k-1}$ which is not available a priori. Thus, we substitute $M_{2,k}$ in \eqref{eq:d2} with $\tilde{M}_{2,k}:=(C_{2,k} G_{2,k-1})^\dagger$ to obtain $\tilde{A}_k:=(I-G_{2,k-1} \tilde{M}_{2,k}C_{2,k}) \hat{A}_{k-1}+G_{2,k-1} \tilde{M}_{2,k} C_{2,k}$ and $\tilde{Q}_k:=(I-G_{2,k-1} \tilde{M}_{2,k} C_{2,k})\hat{Q}_{k-1} (I-G_{2,k-1} \tilde{M}_{2,k} C_{2,k})^\top$. This removes the dependence on $P^x_{k-1|k-1}$ from the uniform detectability and stabilizability tests in Theorem \ref{thm:stableULISE}. 

From \cite[Lemma 5.1 \& Corollary 5.2]{Anderson.Moore.1981}, if $(\tilde{A}_k,C_{2,k})$ is uniformly detectable, then the corresponding filter error covariance $P^{x,sub}_{k|k}$ is bounded. By the optimality of the ULISE algorithm, it follows that the ULISE error covariance $P^{x}_{k|k}$ and $\tilde{L}_k$ are bounded. Next, by \cite[Theorems 4.3 \& 5.3]{Anderson.Moore.1981}, the uniform stability of $(\tilde{A}_k,\tilde{Q}_k^{\frac{1}{2}})$ and the boundedness of $\tilde{L}_k$ implies that the filter (with $\tilde{L}_k$ but with $\tilde{M}_{2,k}$ in the input estimate) is exponentially stable. Finally, using the fact that the ordinary and generalized least squares input estimates have the same expected value (see, e.g., \cite[pp. 223-224]{Draper.Smith.1998}), it can be verified from \eqref{eq:xtilde_stab} that $\mathbb{E}[\tilde{x}_{k|k}]=(I-\tilde{L}_k C_{2,k} )\overline{A}_{k-1} \mathbb{E}[\tilde{x}_{k-1|k-1}]=(I-\tilde{L}_k C_{2,k} )\tilde{A}_{k-1} \mathbb{E}[\tilde{x}_{k-1|k-1}]
$,
from which it follows that the uniform stability of $(\tilde{A}_k,\tilde{Q}_k^{\frac{1}{2}})$ and the boundedness of $\tilde{L}_k$ also implies that ULISE is exponentially stable.
\end{proof}
%

Next, we consider the time-invariant case, for which uniform detectability and uniform stabilizability reduce to standard definitions of detectability and stabilizability \cite{Peters.Iglesias.1999}. Thus, the sufficient conditions of Theorem \ref{thm:convULISE} follow directly. In addition, necessary and sufficient conditions can be obtained for the time-invariant case. Noting the similarity of ULISE to the state estimator in \cite{Cheng.2009} and the conditions given in \cite{Darouach.1997} is independent of the choice of $M_{2,k}$ or  $\tilde{M}_{2,k}$, it can be shown that the convergence and stability conditions are as given in Theorem \ref{thm:convULISE}.

\subsection{Stability of PLISE} \label{sec:stablePLISE}

Unfortunately, the `more complex' structure of PLISE renders the proof approach in the previous section for the stability of ULISE for the time-varying case not applicable. Instead of taking this problem head-on, we choose to only consider the stability of the PLISE variant for the case of linear time invariant systems in Theorem \ref{thm:convPLISE}, which will proven next. 

\begin{proof}[Proof of Theorem \ref{thm:convPLISE}]
To proof the sufficiency of the conditions in Theorem \ref{thm:convPLISE} for the PLISE variant of the unified filter, we consider a suboptimal version of PLISE that utilized a non-BLUE $\hat{d}_2$ by assuming that $\tilde{R}_2=I$, and thus, $M_2$ becomes $\tilde{M}_2$ (similar to the assumption of \cite{Cheng.2009}). Then, we rewrite \eqref{eq:Pstar2} to obtain the associated algebraic Riccati equation as 

\vspace{-0.5cm}
\small
\begin{align*}
\hat{P}^{\star x}=(F^s-K^s C_2) \hat{P}^{\star x} (F^s-K^s C_2)^\top +K^s \Theta K^{s \top} +Q^s
\end{align*}
\normalsize
where $K^s=\hat{N} A L U_2-\hat{S} \Theta^{-1}$, while $F^s$, $Q^s$, $\hat{N}$, $\hat{S}$ and $\Theta$ are as defined in Theorem \ref{thm:convPLISE}. Using the results in \cite{Chan.1984, deSouza.1986}, the error covariance matrix exponentially converges to a unique stabilizing solution of the algebraic Riccati equation if and only if $(F^s,C_2)$ is detectable and $(F^s,Q^{s \frac{1}{2}})$ has no unreachable modes on the unit circle (Condition \eqref{cond2}). To obtain Condition \eqref{cond1} from the detectability of $(F^s,C_2)$, we use the following identities:

\vspace{-0.5cm}
\small
\begin{align*}
{\rm rk} \begin{bmatrix} zI-F^s \\ C_2 \end{bmatrix}&={\rm rk} \begin{bmatrix} I & -s \Theta^{-1} \\ 0 & I \end{bmatrix} \begin{bmatrix} zI-F^s \\ C_2 \end{bmatrix}\\
&=\begin{bmatrix} zI - \hat{N} \hat{A} \\ C_2 \end{bmatrix} =n, \, \forall z \in \mathbb{C}, |z| \geq 1 \\
{\rm rk} \begin{bmatrix} zI-\hat{A} & -G_2 \\ C_2 & 0\end{bmatrix}&={\rm rk} \begin{bmatrix} zI-\hat{A} & -G_2 \\ C_2 & 0\end{bmatrix} \begin{bmatrix} I & 0 \\ - \hat{N} C_2 \hat{A} & I\end{bmatrix}\\
&={\rm rk} \begin{bmatrix} zI-\hat{N}\hat{A} & -G_2 \\ C_2 & 0\end{bmatrix}\\
&={\rm rk} \begin{bmatrix} zI-\hat{N}\hat{A}  \\ C_2 \end{bmatrix}+p-p_H\\
&=n+p-p_H, \, \forall z \in \mathbb{C}, |z| \geq 1,
\end{align*}
\normalsize
the latter of which is equivalent to strong detectability of the system by Theorem \ref{thm:det}. Since the suboptimal version of PLISE admits a bounded steady-state solution, the error covariance, and hence the estimate errors of PLISE remain bounded because by the optimality of PLISE, $P^{x} \leq (I-L C)\hat{P}^{\star x}(I-LC)^\top +LRL^\top +(I-LC)G_2 \tilde{M} U_2^\top R L^\top +L R U_2 \tilde{M}^\top G_2^\top (I-L C)^\top $ where $L=(P^{\star x} C^\top - G_{2} \tilde{M}_{2} U_{2}^\top R)\check{R} (I_l-H_{1} M_{1}^\star)$, $H_{1}=U_{1} \Sigma$, $M^\star_{1}=\Sigma^{-1} (U_{1}^\top \check{R} U_{1})^{-1} U_{1}^\top \check{R}$, $\check{R}:=\Gamma^\top (\Gamma \tilde{R}^\star \Gamma^\top)^{-1} \Gamma$, $\tilde{R}^\star=C P^{x \star} C^\top +R-C G_2 \tilde{M}_2 U_2^\top R-R U_2 \tilde{M}_2^\top G_2^\top C^\top$ and $\Gamma$ is such that $\Gamma \tilde{R}^\star \Gamma^\top$ has full rank.
\end{proof}

\section{Connection to existing literature} \label{sec:connection}
In this section, we show that ULISE and PLISE reduce to estimators that are closely related to the estimators in existing literature in the following special cases.
\subsection{Special Case 1: $H_k$ has full rank} 
In this special case, ${\rm rk}(H_k)=p$ and the singular value decomposition of $H_k=\begin{bmatrix} U_{1,k} & U_{2,k} \end{bmatrix} \begin{bmatrix} \Sigma_k \\ 0 \end{bmatrix} V_{1,k}^\top=U_{1,k} \Sigma_k V_{1,k}^\top$. Thus, $V_{2,k}$ is an empty matrix and correspondingly $G_{2,k}$, $d_{2,k}$, $M_{2,k}$ and $P^d_{2,k}$ are also empty matrices. From \eqref{eq:xstar}, \eqref{eq:Pstar} and \eqref{eq:cov}, we have $\hat{x}^\star_{k|k}=\hat{x}_{k|k-1}$,
\begin{align}
P^x_{k|k}&=(I-L_k C_k) P^x_{k|k-1} (I-L_k C_k)^\top +L_k R_k L_k^\top \hspace{-0.1cm} \\
\tilde{R}_k^\star&=\tilde{R}_k:= C_k P^x_{k|k-1} C_k^\top +R_k\\
\nonumber P^{\star x}_{k|k}&=P^x_{k|k-1}:=\mathbb{E}[(x_k-x_{k|k-1})(x_k-x_{k|k-1})^\top]\\
&= A_{k-1}  P^x_{k-1|k-1} A_{k-1}^\top + G_{k-1} P^d_{k} G_{k-1}^\top\\
\nonumber &+A_{k-1} P^{xd}_{k-1} G_{k-1}^\top +G_{k-1} P^{xd \top}_{k-1} A_{k-1}^\top +Q_{k-1}\\
\nonumber P^d_k&=V_{1,k} P^d_{1,k} V_{1,k}^\top \\
&= (H_k^\top U_{1,k} (T_{1,k} \tilde{R}_k T_{1,k}^\top)^{-1} U_{1,k}^\top H_k)^{-1} \label{eq:PdH1}
\end{align} \vspace{-1.5cm}
\small \begin{subequations} 
\begin{empheq}[left=\empheqlbrace]{align}  
P^{xd {\rm \, I}}_{k} &= P^{xd {\rm \, I}}_{1,k} V_{1,k}^\top = L_{k} R_{k} M_{k}^\top- P^x_{k|k} C_{k}^\top M_{k}^\top\\
P^{xd {\rm \, II}}_{k} &= P^{xd {\rm \, II}}_{1,k} V_{1,k}^\top = L_k \tilde{R}_k M_{k}^\top-P^{x}_{k|k-1} C_{k}^\top M_{k}^\top  \label{eq:PdH2} 
\end{empheq}
\end{subequations}
\normalsize
where  we have defined
\begin{align}
\nonumber &M_k:=V_{1,k} M_{1,k} T_{1,k}\\
\nonumber &=(H_k^\top U_{1,k} (T_{1,k} \tilde{R}_k T_{1,k}^\top)^{-1} U_{1,k}^\top H_k^\top)^{-1} \\ &\qquad H_k^\top U_{1,k} (T_{1,k} \tilde{R}_k T_{1,k}^\top)^{-1} T_{1,k}. \label{eq:MkH1}
\end{align}
Since $\tilde{R}_k$ has full rank, $\Gamma_k$ can be chosen as the identity matrix and the state update and input estimates are
\begin{align}
\hspace{-0.1cm} \hat{x}_{k|k-1}&=A_{k-1} \hat{x}_{k-1|k-1}+B_{k-1} u_{k-1}+G_{k-1} \hat{d}_{k-1}\\
\hat{x}_{k|k}&=\hat{x}_{k|k-1}+L_k(y_k-C_k \hat{x}_{k|k-1}-D_k u_k)
\end{align}
\begin{subequations}
\begin{empheq}[left=\empheqlbrace]{align}
\hat{d}_k^{\rm \, I} &= M_k (y_k-C_k \hat{x}_{k|k}-D_k u_k)\\
\hat{d}_k^{\rm \, II} &= M_k (y_k-C_k \hat{x}_{k|k-1} -D_k u_k)
\end{empheq}
\end{subequations}
with $L_k=P^x_{k|k-1} C_k^\top \tilde{R}_k^{-1} (I-H_k (H_k^\top \tilde{R}_k^{-1} H_k)^{-1} H_k^\top \tilde{R}_k^{-1})$.

Comparing the above equations with the filters in \cite{Gillijns.2007b,Yong.Zhu.Frazzoli.2013}, we note that ULISE variant is closely related to the filter proposed in \cite{Yong.Zhu.Frazzoli.2013}, with the main difference in \eqref{eq:PdH1} and \eqref{eq:MkH1}, which would be equivalent if $T_{1,k}=U_{1,k}^\top$ and $U_{1,k} (T_{1,k} \tilde{R}_k T_{1,k}^\top)^{-1} U_{1,k}^\top=\tilde{R}_k$, which is only true when $U_{2,k}$ is an empty matrix, i.e., when $H_k$ has full row rank.

On the other hand, the PLISE variant is closely related to the filter in \cite{Gillijns.2007b}. Similarly, the only differences lie in \eqref{eq:PdH1}, \eqref{eq:PdH2}  and \eqref{eq:MkH1}, and the filters are equivalent when $H_k$ has full row rank, which also leads to $L_k \tilde{R}_k M_k^\top=0$.

\subsection{Special Case 2: $H_k=0$} 
\vspace{-0.2cm}
In this case, no transformation of the output equations and no decomposition of the unknown input vector is necessary. The $U_{1,k}$ and $V_{1,k}$ are empty matrices while $U_{2,k}$ and $V_{2,k}$ are identity. Thus, ULISE and PLISE reduce to the same state and covariance update equations given by
\begin{align}
\hat{x}_{k|k-1}&=A_{k-1} \hat{x}_{k-1|k-1} +B_{k-1} u_{k-1}\\
\hat{x}^\star_{k|k}&=\hat{x}_{k|k-1}+G_{k-1} \hat{d}_{k-1}\\
\hat{x}_{k|k}&=\hat{x}^\star_{k|k}+L_k(y_k-C_k \hat{x}^\star_{k|k}-D_k u_k)\\
\hat{d}_{k-1}&=M_k (y_k-C_k \hat{x}_{k|k-1}-D_k u_k) \\ 
P^x_{k|k-1}&=A_{k-1} P^x_{k-1|k-1} A_{k-1}^\top +Q_{k-1}\\
\nonumber P^{\star x}_{k|k}&= (I-G_{k-1}M_k C_k)P^x_{k|k-1} (I-G_{k-1}M_kC_k)^\top\\
& \quad +G_{k-1}M_k R_k M_k^\top G_{k-1}^\top\\
P^d_k&=(G_{k-1}^\top C_k^\top \tilde{R}_k^{-1} C_k G_{k-1})^{-1}\\
P^{xd}_k&=-P^x_{k-1|k-1} A_{k-1}^\top C_k^\top M_k^\top\\
P^x_{k|k}&=P^{\star x}_{k|k}+ L_k \tilde{R}^\star_k L_k-L_k S_k^\top -S_k L_k^\top
\end{align}
where $\tilde{R}_k=C_k P^x_{k|k-1} C_k^\top +R_k$, $\tilde{R}_k^\star=C_k P^{\star x}_{k|k} C_k^\top +R_k -C_k G_{k-1} M_k R_k-R_k M_k^\top G_{k-1}^\top C_k^\top$, $S_k=-G_{k-1} M_k R_k +P^{\star x}_{k|k} C_k^\top$, $M_k=(G_{k-1}^\top C_k^\top \tilde{R}_k^{-1} C_k G_{k-1})^{-1} G_{k-1}^\top C_k^\top \tilde{R}_k^{-1}$ and $L_k=(P^{\star x}_{k|k} C_k^\top-G_{k-1} M_k R_k) \check{R}_k$. The above equations are identical to the filter derived in \cite{Gillijns.2007} for systems without direct feedthrough, therefore, ULISE and PLISE are generalizations of the filter in \cite{Gillijns.2007} to systems with direct feedthrough, and by extension, of the filters in \cite{Kitanidis.1987,Darouach.1997}.

\vspace{-0.2cm}
\subsection{Special Case 3: $G_k=0$ and $H_k=0$} \label{sec:kalman} 
\vspace{-0.2cm}
When $G_k=0$ and $H_k=0$, the filter gain $L_k$ reduces to the \emph{Kalman filter} gain $L_k=P^x_{k|k-1} C_k^\top \tilde{R}_k^{-1}$ where $\tilde{R}_k=C_k P_{k|k-1} C_k^\top +R_k$, while the state and covariance update reduces to the Kalman filter equations:
\begin{align}
& \hspace{-0.2cm}  \hat{x}_{k|k}=A_{k-1}\hat{x}_{k-1|k-1}+L_k(y_k-C_k A_{k-1} \hat{x}_{k-1|k-1})\\
& \hspace{-0.2cm} P^x_{k|k-1}=A_{k-1} P^x_{k-1|k-1} A_{k-1}^\top + Q_{k-1}\\
& \hspace{-0.2cm} P^x_{k|k}=(I-L_k C_k) P^x_{k|k-1} (I-L_k C_k)^\top +L_k R_k L_k^\top
\end{align}

\section{Illustrative Examples} \label{sec:examples}
\subsection{Fault Identification}
In this example, we consider the state estimation and fault identification problem when the system dynamics is plagued by faults, $d_k$, that can either influence the system dynamics through the input matrix $G_k$ or the outputs through the feedthrough matrix $H_k$, as well as zero-mean Gaussian white noise. Thus, the objective is to estimate the states of the system for the sake of continued operation in spite of the faults, and to identify the faults that the system is experiencing for self-repair or maintenance purposes. Specifically, the linear discrete-time problems we consider are based on the system given in \cite{Cheng.2009}, which is similar to the failure detection problem first considered in \cite{Keller.1996}, with six different $H$ matrices to illustrate the effect of parameter changes on filter performance:

\small \vspace{-0.5cm}
\begin{align*}
A &= \begin{bmatrix} 0.5 & 2 & 0 & 0 & 0\\ 0 & 0.2 & 1 & 0 &1 \\ 0 & 0 & 0.3 & 0 & 1 \\ 0 & 0 & 0 & 0.7 & 1 \\ 0 & 0 & 0 & 0 & 0.1\end{bmatrix}; \begin{array}{ll}
 B &= 0_{5 \times 1}; \\
C &= I_5;\\
D &= 0_{5 \times 1}; \end{array} \;
 G = \begin{bmatrix} 1 & 0 & -0.3 \\ 1 & 0 & 0 \\ 0& 0 & 0 \\0& 0 & 0 \\0& 0 & 0 \end{bmatrix};\\ \quad
Q &= 10^{-4} \begin{bmatrix} 1 & 0 & 0 & 0 & 0  \\ 0 & 1 & 0.5& 0 & 0\\ 0& 0.5 & 1 & 0 & 0 \\ 0 & 0 & 0 & 1 & 0 \\ 0 & 0 & 0 & 0 & 1 \end{bmatrix}; \, 
R = 10^{-2}\begin{bmatrix}  1 & 0 & 0 & 0.5 & 0\\0 & 1 & 0 & 0 & 0.3\\0 & 0 & 1 & 0 & 0\\ 0.5 & 0 & 0 & 1 & 0 \\ 0 & 0.3 & 0 & 0 & 1\end{bmatrix} \hspace{-0.1cm};\\
H^1&=\begin{bmatrix} 0 & 0 & 1 \\ 0 & 0 & 0 \\ 0 & 1 & 0 \\ 0 & 0 & 0 \\ 0 & 0 & 0 \end{bmatrix}; \quad
H^2=\begin{bmatrix} 0 & 0 & 1 \\ 0 & 0 & 0 \\ 0 & 1 & 0 \\ 0 & 0 & 0 \\ 1 & 0 & 0 \end{bmatrix}; \quad
H^3=\begin{bmatrix} 0 & 0 & 0 \\ 0 & 0 & 0 \\ 0 & 1 & 0 \\ 0 & 0 & 0 \\ 1 & 0 & 0 \end{bmatrix}, \\ 
H^4&=\begin{bmatrix} 0 & 0 & 0 \\ 1 & 0 & 0 \\ 0 & 1 & 0 \\ 0 & 0 & 0 \\ 0 & 0 & 0 \end{bmatrix}; \quad
H^5=\begin{bmatrix} 0 & 0 & 0 \\ 0 & 0 & 0 \\ 0 & 1 & 0 \\ 0 & 0 & 1 \\ 0 & 0 & 0 \end{bmatrix}; \quad
H^6=\begin{bmatrix} 0 & 0 & 0 \\ 1 & 0 & 0 \\ 0 & 1 & 0 \\ 0 & 0 & 1 \\ 0 & 0 & 0 \end{bmatrix}.
\end{align*}
\normalsize
With the above $H$ matrices, the invariant zeros of the matrix pencil $\begin{bmatrix} zI-\hat{A} & -G_2\\  C_2 & 0 \end{bmatrix}$ are respectively $\{0.3,0.8\}$, $\{0.1,0.3,0.5,0.7,0.8\}$, $\emptyset$, $\{0.3,-0.8\}$, $\emptyset$ and $\{0.1,0.7, 0.3,-0.8,0.35\}$. Thus, all six systems are strongly detectable. Moreover, the direct feedthrough matrices of the second and sixth systems, $H^2$ and $H^6$, have full rank.

The unknown inputs used in this example are 
\vspace{-0.5cm}
\small
\begin{align*}
d_{k,1}&=\left\{
\begin{array}{c l}
    1, & \quad 500 \leq k \leq 700\\
    0, & \quad {\rm otherwise}
\end{array}\right.\\
d_{k,2}&=\left\{
\begin{array}{c l}
    \frac{1}{700}(k-100), & \quad 100 \leq k \leq 800\\
    0, & \quad {\rm otherwise}
\end{array}\right.\\
d_{k,3}&=\left\{
\begin{array}{c l}
   3, &  500\leq k \leq 549, 600 \leq k \leq 649, 700 \leq k \leq 749\\
   -3, &  550\leq k \leq 599, 650 \leq k \leq 699, 750 \leq k \leq 799\\
    0, &  {\rm otherwise}.
\end{array}\right.
\end{align*}
\normalsize

To illustrate the performance of the unified simultaneous input and state estimators, measured by the steady-state trace of the error covariance matrices, we compare the performance of the following filters:  (i) Cheng et al. filter \cite{Cheng.2009}, augmented by estimates the unknown input in the BLUE sense, i.e., with \eqref{eq:variant1} and \eqref{eq:d2} (CYWZ), (ii) ULISE from Section \ref{sec:filter}, and (iii) PLISE from Section \ref{sec:filter}, as well as the filters for systems with full-rank $H$ matrix: (iv) Gillijns and De Moor filter (GDM) \cite{Gillijns.2007b}, (iv) Fang et al. filter (FSY) \cite{Fang.2011} and (v) Yong et al. filter (YZF) \cite{Yong.Zhu.Frazzoli.2013}. The simulations were implemented in MATLAB on a 2.2 GHz Intel Core i7 CPU.

\begin{figure}[!t]
\begin{center}
\includegraphics[scale=0.3575,trim=30mm 10mm 20mm 10mm]{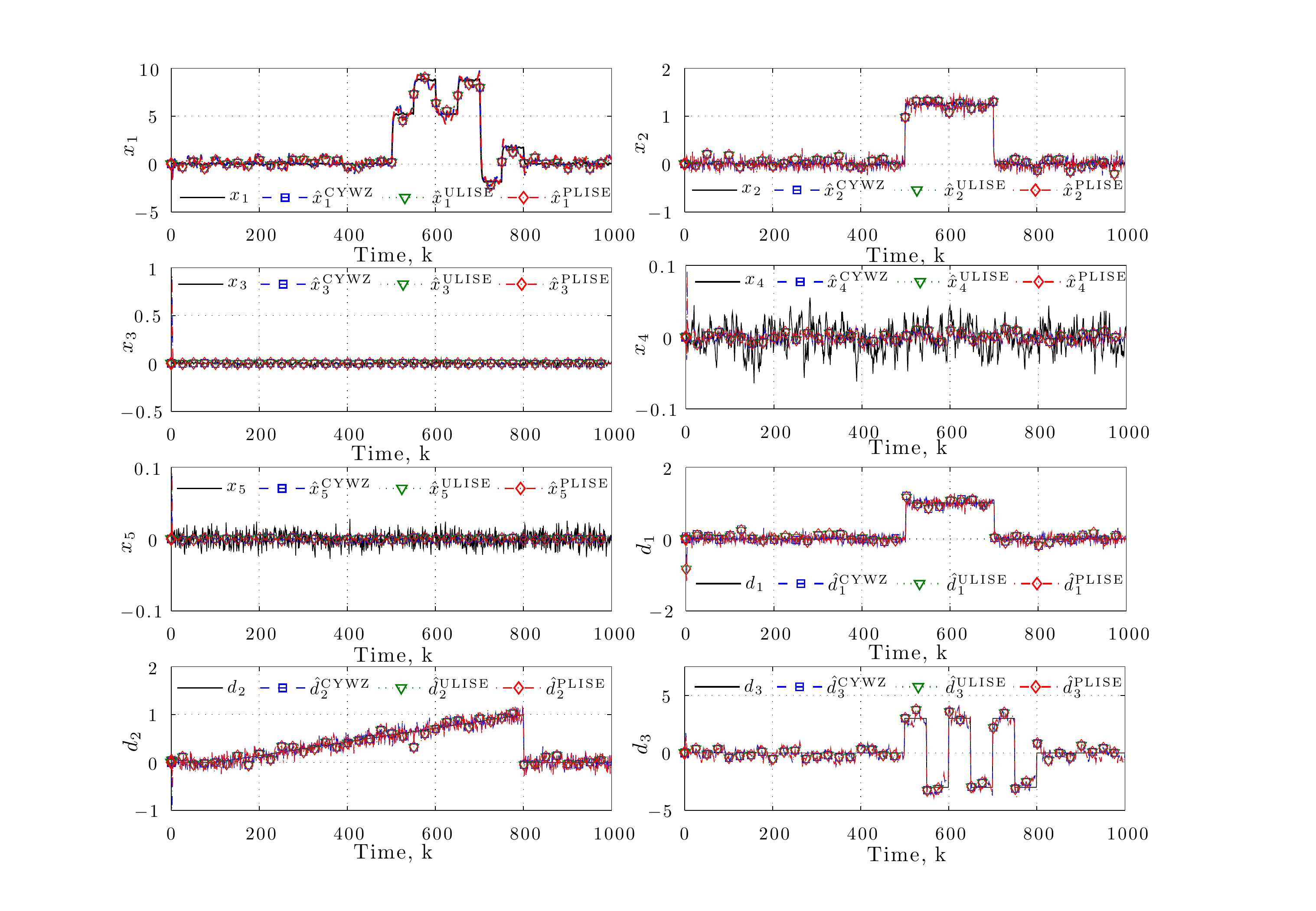}
\caption{Actual states $x_1$, $x_2$, $x_3$, $x_4$, $x_5$ and its estimates, as well as unknown inputs $d_1$, $d_2$ and $d_3$ and its estimates.\label{fig:inputs} }
\end{center}
\end{figure}

\begin{figure}[!t]
\begin{center}
\includegraphics[scale=0.37,trim=25mm 55mm 20mm 0mm]{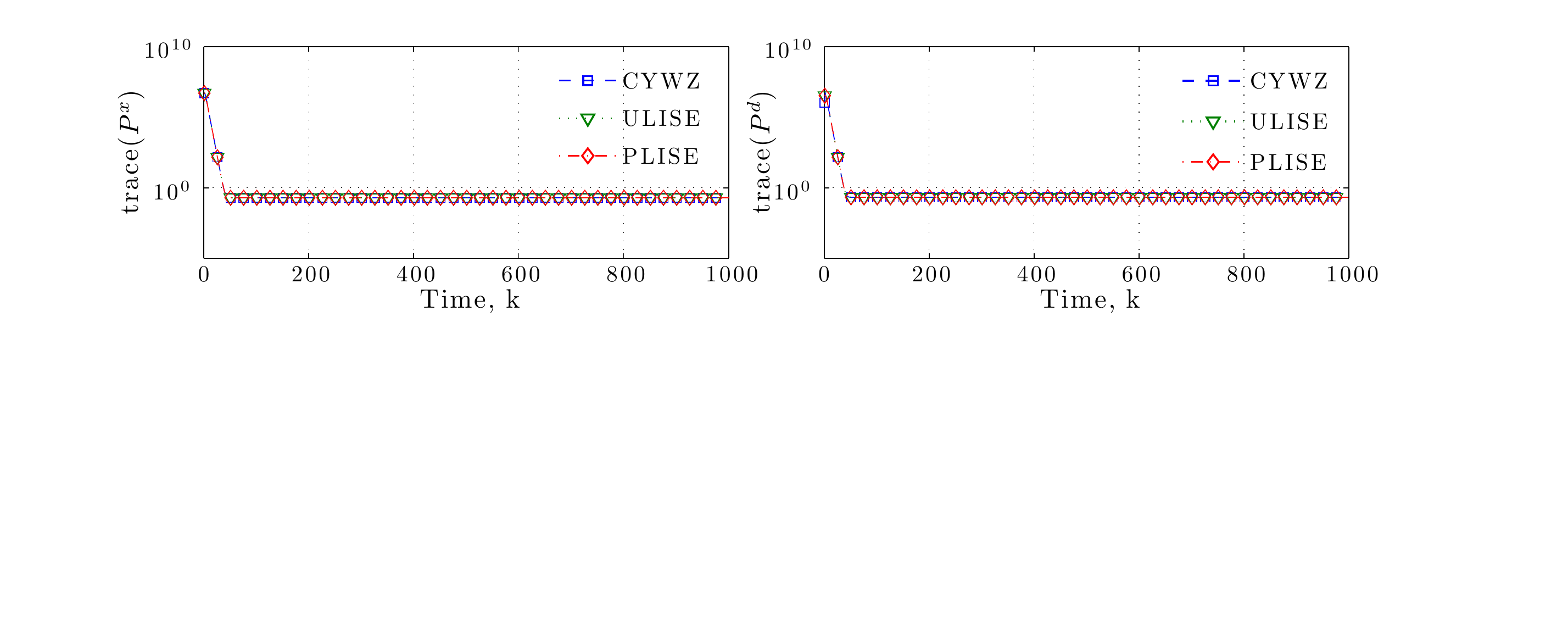}
\caption{Trace of estimate error covariance of states, tr($P^x$), and unknown inputs, tr($P^d$). \label{fig:variances}}
\end{center}
\end{figure}

Figure \ref{fig:inputs} shows a comparison of the input and state estimation of the first three MVU estimators for the first system with $H^1$. In this case, these estimators were successful at estimating the states as well as the unknown inputs. It does appear from Figure \ref{fig:variances} all three estimators produces the same steady-state error covariances. However, if we consider the results of all six systems in Table \ref{tab:compare}, we observe that PLISE is  outperformed by CYWZ and ULISE.  Note also that ULISE are consistently the best filters, which agrees with the claim in Section  \ref{sec:optimality} of being globally optimal over the class of all linear unbiased state and input estimates for systems with unknown inputs, while CYWZ performs just as well, which shows that in this particular example, the replacement of the generalized least squares estimate of $d_{2,k}$ with the ordinary least squares estimate have little impact on the filter performance.

\begin{table}[t] \tabcolsep=0.07cm \tiny
\caption{Steady-state Performance of CYWZ, ULISE, PLISE, GDM, FSY and YZF. \label{tab:compare} \vskip0.1cm}
\centering
\begin{tabular}{| c | l || c | c | c | c | c || c | c| c|} \hline
\multicolumn{2}{|c||}{} &  {$P^x_{11}$} & {$P^x_{22}$} & {$P^x_{33}$} & $P^x_{44}$ & $P^x_{55}$ & $P^d_{11}$ & $P^d_{22}$ & $P^d_{33}$ \\
\hline
\multirow{6}{*}{$H^1$} & CYWZ & 0.1843  &  0.0091  &  0.0002 &   0.0004  &  0.0001 &   0.0099 &   0.0102  &  0.1923  \\
&  ULISE  & 0.1843 &   0.0091 &   0.0002 &   0.0004 &   0.0001  &  0.0099 &   0.0102  &  0.1923   \\
&  PLISE  & 0.1843  &  0.0091  &  0.0002 &   0.0004 &   0.0001  &  0.0099  &  0.0102 &   0.1923   \\
&  GDM  & N/A   & N/A & N/A & N/A   & N/A & N/A & N/A   & N/A  \\
&  FSY  & N/A   & N/A & N/A & N/A   & N/A & N/A & N/A   & N/A  \\
&  YZF  & N/A   & N/A & N/A & N/A   & N/A & N/A & N/A   & N/A   \\ \hline
\multirow{6}{*}{$H^2$} & CYWZ & 0.1494  &  0.0052 &   0.0002  &  0.0004  &  0.0001 &   0.0097 &   0.0102 &   0.1574  \\
&  ULISE  & 0.1494  &  0.0052 &   0.0002  &  0.0004  &  0.0001 &   0.0097 &   0.0102 &   0.1574    \\
&  PLISE  & 0.1614 & 0.0053  &  0.0002 &   0.0004 &   0.0001 &   0.0102  &  0.0102  &  0.1889   \\
&  GDM  & 0.1494  &  0.0052 &   0.0002  &  0.0004 &   0.0001  &  0.0097 &   0.0102 &   0.1574   \\
&  FSY  & 0.1724 &   0.0108 &   0.0002  &  0.0004 &   0.0001 &   0.0097 &   0.0102  &  0.1648  \\
&  YZF  & 0.1494  &  0.0052 &   0.0002  &  0.0004 &   0.0001  &  0.0097 &   0.0102 &   0.1574   \\ \hline
\multirow{6}{*}{$H^3$} & CYWZ & 0.0076  &  0.0052 &   0.0002 &   0.0004  &  0.0001 &   0.0097 &   0.0102  &  0.3906 \\
&  ULISE  & 0.0076  &  0.0052 &   0.0002 &   0.0004  &  0.0001 &   0.0097 &   0.0102  &  0.3906 \\
&  PLISE  & 0.0076  &  0.0053  &  0.0002  &  0.0004  &  0.0001 &   0.0102 &   0.0102  &  0.3961   \\
&  GDM  & N/A   & N/A & N/A & N/A   & N/A & N/A & N/A   & N/A  \\
&  FSY  & N/A   & N/A & N/A & N/A   & N/A & N/A & N/A   & N/A  \\
&  YZF  & N/A   & N/A & N/A & N/A   & N/A & N/A & N/A   & N/A  \\  \hline
\multirow{6}{*}{$H^4$} & CYWZ & 0.0076 &   0.0257  &  0.0002 &   0.0004  &  0.0001  &  0.0348 &   0.0102 &   0.4925 \\
&  ULISE  & 0.0076 &   0.0257  &  0.0002 &   0.0004  &  0.0001  &  0.0348 &   0.0102 &   0.4925 \\
&  PLISE  & 0.0076 &   0.0258  &  0.0002  &  0.0004 &   0.0001 &   0.0349  &  0.0102  &  0.4925   \\
&  GDM  & N/A   & N/A & N/A & N/A   & N/A & N/A & N/A   & N/A  \\
&  FSY  & N/A   & N/A & N/A & N/A   & N/A & N/A & N/A   & N/A  \\
&  YZF  & N/A   & N/A & N/A & N/A   & N/A & N/A & N/A   & N/A  \\  \hline
\multirow{6}{*}{$H^5$} & CYWZ & 0.0079 &   0.0074  &  0.0002  &  0.0004  &  0.0001 &   0.0089  &  0.0102  &  0.0099 \\
&  ULISE  & 0.0079 &   0.0074  &  0.0002  &  0.0004  &  0.0001 &   0.0089  &  0.0102  &  0.0099 \\
&  PLISE  & 0.0079  &  0.0074 &   0.0002 &   0.0004  &  0.0001  &  0.0089 &   0.0102  &  0.0150   \\
&  GDM  & N/A   & N/A & N/A & N/A   & N/A & N/A & N/A   & N/A  \\
&  FSY  & N/A   & N/A & N/A & N/A   & N/A & N/A & N/A   & N/A  \\
&  YZF  & N/A   & N/A & N/A & N/A   & N/A & N/A & N/A   & N/A  \\  \hline
\multirow{6}{*}{$H^6$} & CYWZ & 0.0076 &   0.0218 &   0.0002 &   0.0004  &  0.0001 &   0.0309  &  0.0102 &   0.0097 \\
&  ULISE  & 0.0076 &   0.0218 &   0.0002 &   0.0004  &  0.0001 &   0.0309  &  0.0102 &   0.0097 \\
&  PLISE  & 0.0078 &   0.0257  &  0.0002  &  0.0004  &  0.0001  &  0.0368   & 0.0102  &  0.0165   \\
&  GDM  & 0.0076  &  0.0218 &   0.0002 &   0.0004 &   0.0001 &   0.0309 &   0.0102 &   0.0097  \\
&  FSY  & 0.0315 &   0.0232 &   0.0002  &  0.0004  &  0.0001 &   0.0310  &   0.0102  &  0.0100  \\
&  YZF  & 0.0076 &   0.0218 &   0.0002  &  0.0004  &  0.0001  &  0.0309  &  0.0102 &   0.0097  \\  \hline
\end{tabular}
\end{table}

On the other hand, when the direct feedthrough matrix has full rank, as with $H^2$ and $H^6$, GDM and YZF performed just as well as CYWZ and ULISE, which is consistent with the claim of global optimality of GDM in \cite{Hsieh.2010}. In both examples, the intentionally suboptimal FSY filter performs better than PLISE at estimating the unknown inputs, but is worse than PLISE when estimating the system states.

\subsection{Multi-vehicle Tracking}
In this second example, we consider the problem of the position and velocity tracking of multiple vehicles, for e.g., at an intersection, with partial information about the decisions of the vehicles as well as faulty sensor readings. This can be particularly useful for the design of intelligent transportation systems. To simplify the problem, we consider the scenario with two vehicles, in which each vehicle only has access to its own control input, thus, the input of the other vehicle is unknown. Furthermore, the velocity measurement of the vehicle is corrupted by a time-varying bias, which is also unknown. Thus, we model the linear continuous-time model of the coupled system as:
\scriptsize
\begin{align*}
\begin{bmatrix}
\dot{p} \\ \ddot{p} \\ \dot{q} \\ \ddot{q}
\end{bmatrix} &= \begin{bmatrix} 0 & 1 & 0 & 0 \\ 0 & -0.1 & 0 & 0 \\ 0 & 0 & 0 & 1 \\ 0 & 0 & 0 & -0.1 \end{bmatrix}\begin{bmatrix}
p \\ \dot{p} \\ q \\ \dot{q}
\end{bmatrix}+\begin{bmatrix} 0 \\ 0 \\ 0 \\ 1 \end{bmatrix} u+\begin{bmatrix} 0 & 0 \\ 1 & 0 \\ 0 & 0 \\ 0 & 0 \end{bmatrix} \begin{bmatrix} d_1 \\ d_2 \end{bmatrix} + \begin{bmatrix} 0 \\ w_1 \\ 0 \\ w_2 \end{bmatrix} \\
y&=\begin{bmatrix} 1 & 0 & 0 & 0 \\ 0 & 1 & 0 & -1 \\ 0 & 0 & 1 & 0 \\ 0 & 0 & 0 & 1  \end{bmatrix} \begin{bmatrix}
p \\ \dot{p} \\ q \\ \dot{q}
\end{bmatrix}+ \begin{bmatrix} 0 & 0 \\ 0 & 0 \\ 0 & 0 \\ 0 & 1 \end{bmatrix} \begin{bmatrix} d_1 \\ d_2 \end{bmatrix} + v
\end{align*}
\normalsize
where $p$ and $\dot{p}$, and $q$ and $\dot{q}$, are the displacements and velocities of the uncontrolled and controlled vehicle, respectively. $d_1$ is the unknown input of the uncontrolled  vehicle while $d_2$ represents the unknown time-varying bias. The intensities of the zero mean, white Gaussian noises, $w=\begin{bmatrix} 0 & w_1 & 0 & w_2\end{bmatrix}^\top$ and $v$, are given by:
\begin{align*}
Q_c &= 10^{-4} \begin{bmatrix} 0 & 0 & 0 & 0  \\ 0 & 1.6 &  0 & 0\\ 0& 0 & 0 & 0 \\ 0 & 0 & 0 & 0.9 \end{bmatrix}; \
R_c = 10^{-4} \begin{bmatrix} 1 & 0 & 0 & 0  \\ 0 & 0.16 &  0 & 0\\ 0& 0 & 0.9 & 0 \\ 0 & 0 & 0 & 2.5 \end{bmatrix}.
\end{align*}
Since the proposed filter is for discrete systems, we first convert the continuous dynamics to a discrete equivalent model with sample time $\triangle t= 0.01s$, assuming zero-order hold for the known and unknown inputs, $u$ and $d$:
\begin{align*}
x_{k+1}&=A_d x_k + B_d u_k + G_d d_k + w_{d,k}\\
y_k&=C_d x_k + H_d d_k + v_{d,k}
\end{align*}
where $x=\begin{bmatrix} p & \dot{p} & q & \dot{q} \end{bmatrix}^\top$, $k=0,1,2,\hdots$ and $t=k \triangle t$, while the system matrices as well as noise covariances can be computed, e.g., using conversion algorithms involving matrix exponentials as in \cite{DeCarlo.1989,VanLoan.1978}, to obtain:
\begin{align*}
A_d &= \begin{bmatrix} 1 & 0.01 & 0 & 0 \\ 0 & -0.999 & 0 & 0 \\ 0 & 0 & 1 & 0.01 \\ 0 & 0 & 0 & 0.999 \end{bmatrix};\
 B_d = \begin{bmatrix} 0 \\ 0 \\ 0 \\ 0.01 \end{bmatrix}; \\
 C_d &= \begin{bmatrix} 1 & 0 & 0 & 0 \\ 0 & 1 & 0 & -1 \\ 0 & 0 & 1 & 0 \\ 0 & 0 & 0 & 1 \end{bmatrix}; \
 G_d = \begin{bmatrix} 0 & 0 \\ 0.01 & 0 \\ 0 & 0 \\ 0 & 0 \end{bmatrix}; \quad
 H_d = \begin{bmatrix} 0 & 0 \\ 0 & 0 \\ 0 & 0 \\ 0 & 1 \end{bmatrix};\\
Q_d &= 10^{-5} \begin{bmatrix} 0.0000 &   0.0008   &      0   &      0 \\
    0.0008 &   0.1598   &      0    &     0\\
         0   &      0  &  0.0000  &  0.0004\\
         0   &      0  &  0.0004  &  0.0899\end{bmatrix}; \;
R_d = R_c,
\end{align*}
with $d_{1,k}$ and $d_{2,k}$ as shown in Figure \ref{fig:inputs2} (where $t=k \Delta t$).

From Figure \ref{fig:inputs2}, we observe that both variants of the filter proposed in this paper successfully estimate the system states and the unknown inputs, which consist of the input of the uncontrolled vehicle and the time-varying measurement bias. The slight difference between the two variants can be seen in Figure \ref{fig:variances2} where the rate of convergence of trace of the unknown input estimate error covariance of the PLISE variant is slightly slower.

\begin{figure}[!t]
\begin{center}
\includegraphics[scale=0.37,trim=25mm 7mm 40mm 10mm,clip]{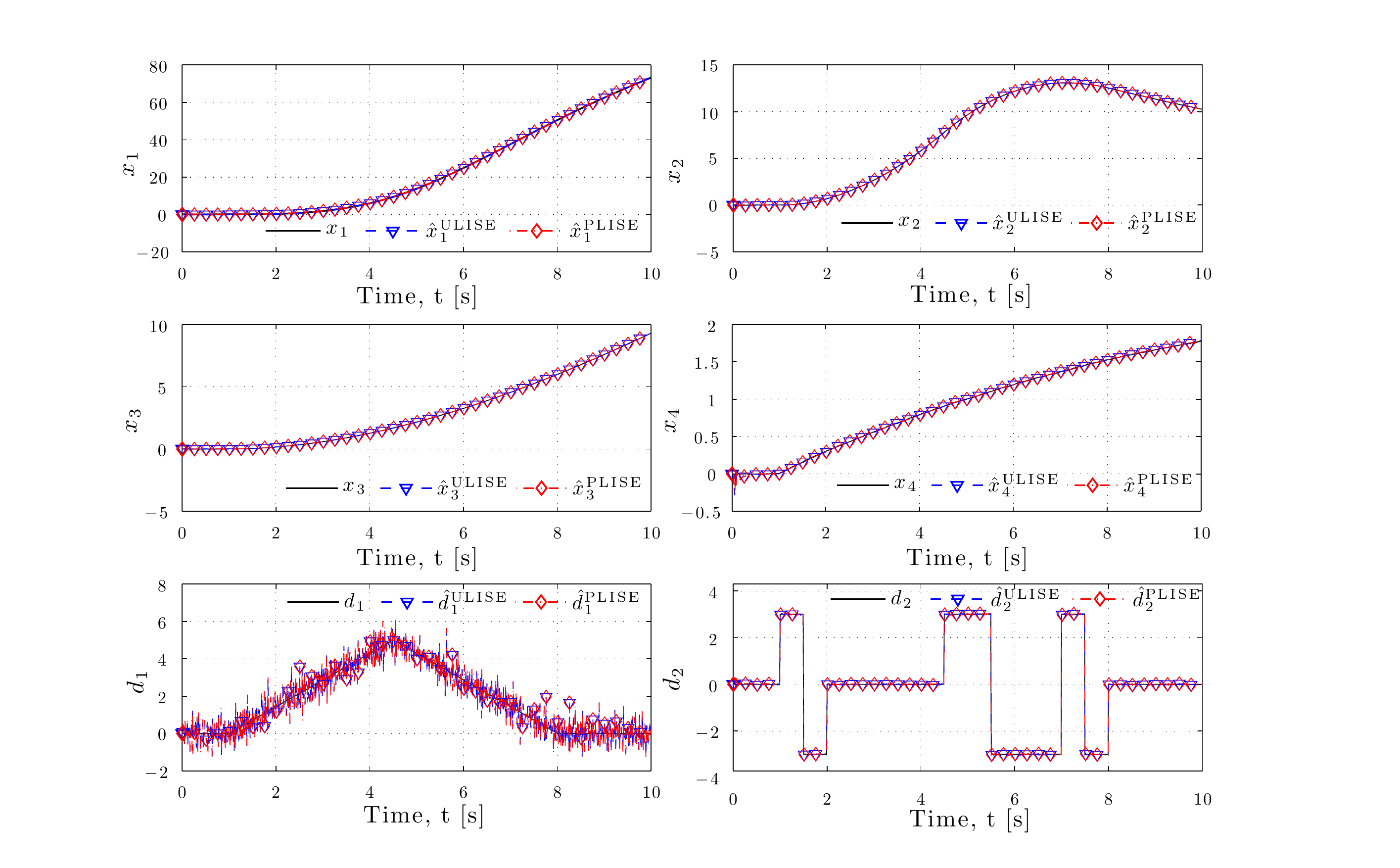}
\caption{Actual states $x_1$, $x_2$, $x_3$, $x_4$ and its estimates, as well as unknown inputs $d_1$, $d_2$, and its estimates.\label{fig:inputs2} }
\end{center}
\end{figure}

\begin{figure}[!t]
\begin{center}
\includegraphics[scale=0.37,trim=30mm 60mm 40mm 5mm]{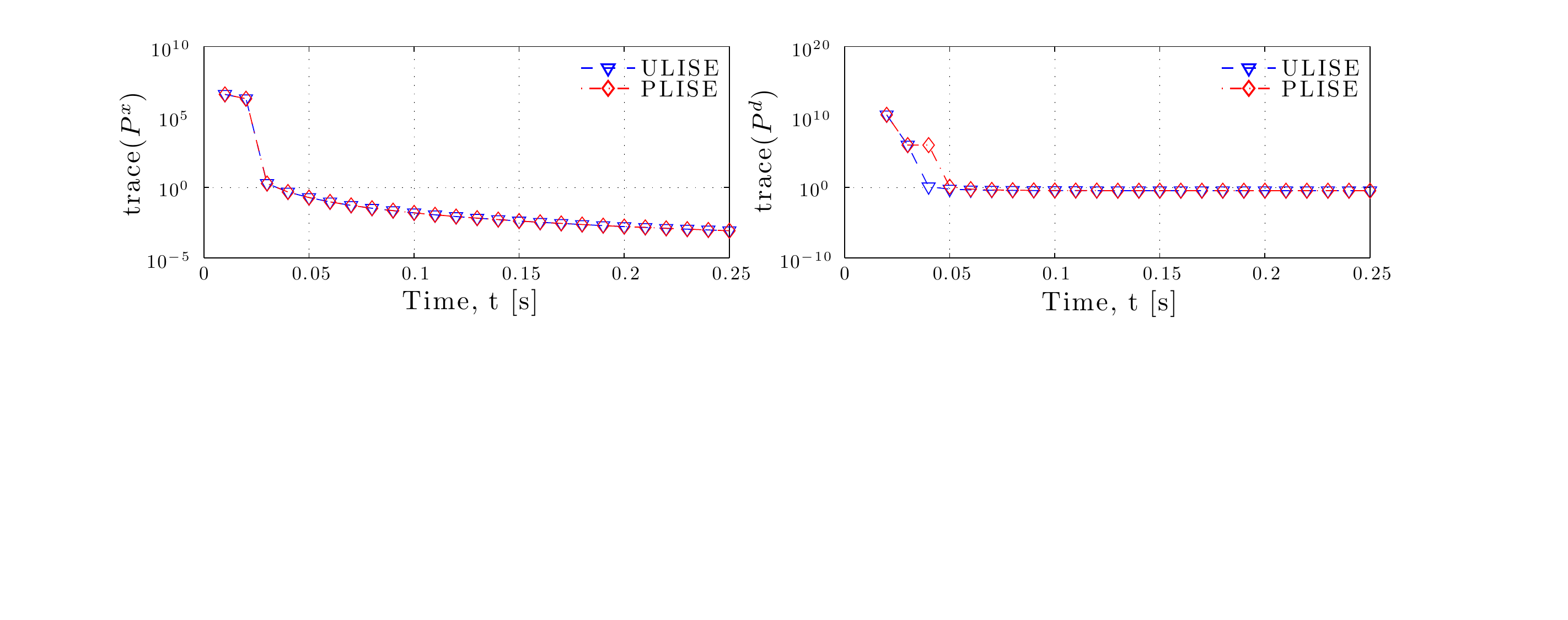}
\caption{Trace of estimate error covariance of states, tr($P^x$), and unknown inputs, tr($P^d$) for the first $0.25s$.\label{fig:variances2}}
\end{center}
\end{figure}

\vspace{-0.2cm}
\section{Conclusion} \label{sec:conclusion}
\vspace{-0.1cm}
This paper presented a unified filter for simultaneously estimating the states and unknown inputs in an unbiased minimum-variance sense for linear discrete-time stochastic systems, without any restriction on the direct feedthrough matrix of the system. Two variants of the filter is proposed, one of which uses the propagated state estimate for unknown input estimation (PLISE), and the other with the updated state estimate (ULISE). 
We proved that ULISE is also globally optimal over the class of all linear unbiased state and input estimates for systems with unknown inputs and provided stability conditions for the filter, which are shown to be closely related to the strong detectability of the system. 
Simulation results have shown that ULISE was the best estimator in all the test trials, whereas PLISE, though is not globally optimal, performed reasonably well.

A possible future direction is the extension of the current unified filter to linear continuous-time systems, switched systems and nonlinear systems.

\vspace{-0.2cm}
\section*{Acknowledgments}
\vspace{-0.25cm}
The work presented in this paper was supported in part by the National Science Foundation, grant \#1239182.

\vspace{-0.2cm}
\bibliographystyle{unsrt}

\bibliography{biblio}

\end{document}